\newcommand{\R}{\mathbb R}
\newcommand{\gep}{\varepsilon}
\theoremstyle{definition}
\newtheorem{thm}{Theorem}[section]
\newtheorem{cor}[thm]{Corollary}
\newtheorem{prop}[thm]{Proposition}
\newtheorem{lem}[thm]{Lemma}
\newtheorem{defin}[thm]{Definition}
\newtheorem{rem}[thm]{Remark}
\newtheorem{exa}[thm]{Example}
\newcommand{\subjclass}[1]{\bigskip\noindent\emph{2010 Mathematics Subject Classification:}\enspace#1}
\newcommand{\keywords}[1]{\noindent\emph{Keywords:}\enspace#1}
\numberwithin{equation}{section}
\begin{document}

\title{A Hamiltonian 
approach to small time local attainability\\ of manifolds for nonlinear control systems}
\author{Pierpaolo Soravia\thanks{email: soravia@math.unipd.it. Orcid ID: 0000-0002-0951-4871 }\\
Dipartimento di Matematica\\ Universit\`{a} di Padova, via Trieste 63, 35121 Padova, Italy}

\date{}
\maketitle

\begin{abstract} This paper develops a new approach to small time local attainability of smooth manifolds of any dimension, possibly with boundary and to prove H\"older continuity of the minimum time function. We give explicit pointwise conditions of any order by using higher order hamiltonians which combine derivatives of the controlled vector field and the functions that locally define the target.
For the controllability of a point our sufficient conditions extend some classically known results for symmetric or control affine systems, using the Lie algebra instead, but for targets of higher dimension our approach and results are new. We find our sufficient higher order conditions explicit and easy to use for targets with curvature and general control systems .
\end{abstract}

\subjclass{Primary 49L20; Secondary 93B05, 35F21, 35D40.}
\keywords{Control theory, controllability conditions, minimum time function, Hamilton Jacobi equation, H\"older regularity}

\section{Introduction}

In this paper, we consider a nonlinear control system, $F:\R^n\times A\to\R^n$,
\begin{equation}\label{eqsystem}\left\{\begin{array}{ll}
\dot x_t=F(x_t,a_t),\\
x_0=x\in\R^n.
\end{array}\right.\end{equation}
Controllability of (\ref{eqsystem}) is a classical subject in optimal control. Given a closed set $\mathcal T\subset\R^n$, our target,
we are interested in the behaviour of trajectories, the solutions of (\ref{eqsystem}), in the neighborhood of a given point $x_o\in {\mathcal T}\backslash\hbox{int }\mathcal T$. We want to find trajectories reaching $\mathcal T$ in small time starting at any point $x$ in the neighborhood of $x_o$
in order to make the target small time locally attainable (STLA) in the neighborhood of $x_o$, or equivalently the minimum time function $T$ continuous (and vanishing) at $x_o$.
Our target is locally the solution of a system of $h(\leq n)$ smooth and independent equations and at most an inequality (if $h<n$), i.e. a manifold with boundary. 
The two extreme cases in our study are $\mathcal T=\{x_o\}$, the point, and  $\mathcal T$ being the closure of an open set, the {\it fat} target. 
Particularly the former is a classical subject of geometric control theory, and it has been deeply studied in the literature. The latter has become interesting in more recent years because it presents difficulties of a different nature and it has implications in the study of regularity of solutions to Hamilton Jacobi Bellman equations.
We will give answers also for all intermediate dimensions of the target, where the literature is much less expanded, as far as we know. We are not going to make any assumptions on the algebraic structure of the controlled vector field $F$, while in the literature one mostly finds results for symmetric, or control affine systems.
We point out that all our arguments are local in the neighborhood of the given point $x_o\in\mathcal T$, but for notational convenience, we prefer to think the system globally defined. The system will always guarantee existence, uniqueness and uniform boundedness of trajectories, see (\ref{eqlip}) below,
and control functions $a_t,t>0$ will always be piecewise constant. We will often say that $f:\R^n\to\R^n$ is an available vector field of the system, if there is $a\in A$ such that $f(x)\equiv F(x,a)$.

Our main focus is on finding simple and efficient sufficient conditions for regularity of the minimum time function $T$, rather than the perspective of geometric control, namely discussing the structure of the Lie algebra leading to local controllability properties. We will obtain the continuity of $T$ at $x_o$ by proving local estimates of the form
\begin{equation}\label{eqestt}
T(x)\leq C|x-x_o|^{1/k},\quad x_o\in\mathcal T,\;x\in B_\delta(x_o)\end{equation}
for an appropriate positive integer $k$. 
We say that the system satisfies a $k-$th order attainability condition at $x_o$ when (\ref{eqestt}) holds.
It is well known, see e.g. our paper with Bardi \cite{baso2} and the book \cite{bcd}, that if this holds true at every point of the target then the minimum time function is continuous in its open domain, while it is usually only lower semicontinuous in general, given appropriate convexity assumptions on $F$.
When the previous estimate can be improved to
\begin{equation}\label{eqestd}
T(x)\leq Cd(x,\mathcal T)^{1/k},\quad x_o\in\mathcal T,\;x\in B_\delta(x_o)\end{equation}
 at every point of the target, then it also known that $T$ becomes moreover locally $1/k-$H\"older continuous in its domain. 
Here $d(x,\mathcal T)$ indicates the distance of $x$ from the target.
We think that it is preferable to split these two steps of the quest for regularity of $T$, since proving
(\ref{eqestt}) allows us to extend some of the arguments of the case of the point target, while going from (\ref{eqestt}) to (\ref{eqestd}) is not always obvious and is not yet clarified for general systems. This second step is clear if we can
choose the constants $C,k,\delta$ in (\ref{eqestt}) uniformly in $x_o$, as it is usually done in the literature, although this is far from being necessary in general. We will not discuss it in full detail in the present paper, see our other paper \cite{so2} for some positive results. Of course when $\mathcal T$ is a point, the two estimates (\ref{eqestt}), (\ref{eqestd}) coincide. Recall that when $T$ is continuous, it is the unique solution of the Hamilton-Jacobi partial differential equation, solved by the classical Bellman approach, and H\"older regularity of $T$ also determines the regularity of solutions of more general HJ equations based on the same control system, see \cite{bfs}.

For the point target, usually one seeks answers in the Lie algebra of the available vector fields, for instance the well known full rank condition. This is sufficient for symmetric systems, as a consequence of the classical Chow-Rashevski \cite{cho} result. For affine systems, some classical results as for instance Sussmann \cite{su0,su} seek properties on the Lie algebra of the available vector fields in addition to the rank condition in order to obtain controllability at equilibrium points, see in particular the work by Frankowska \cite{fra} and Kawsky \cite{kaw}, that extend the classical Kalman condition for linear systems, see \cite{ka}. Our perspective, that also applies to general targets, is different. In order to explain it, suppose for now that locally in the neighborhood of $x_o$, $\mathcal T=\{x:u(x)=u(x_o)\}$ where $u:\R^n\to\R^h$. Our general idea is to start by finding directions $v\in\R^h\backslash\{0\}$ of $k-$th order variation of $u$ at $x_o$, namely such that there is a trajectory of (\ref{eqsystem}) with $x_0=x_o$ such that
$$u(x_t)=u(x_o)+vt^k+t^ko(1),\quad\hbox{as }t\to0+.$$
We will construct such directions quite explicitly by using higher order hamiltonian expressions that involve derivatives of the vector fields and those of the function $u$, obtained by Taylor expansions of the composed function $u(x_t)$. Next we will require the family of such directions to be a positive basis of $\R^h$, in order to prove that the target is STLA in the neighborhood of $x_o$ and (\ref{eqestt}) holds. 
Roughly speaking, and notice that $h$ may be smaller than $n$, the system can move in directions that project to a positive basis of the normal space of the target at $x_o$. We stress that we only study the family of directions at the point $x_o$.
Notice that if the target is the single point $x_o$ then we can choose $u(x)=x-x_o$. Thus we feel that the family of such directions is the correct counterpart of Lie algebra in our case and the higher order hamiltonians we consider is the counterpart of the Lie bracket. 
We build reference trajectories starting at $x_o$, one for each element of the positive basis, and we estimate their distance with the trajectory starting at a generic point $x$ in the neighborhood of $x_o$ and using the same controls one after the other. Eventually with a fixed point argument and extending an implicit function result, originally shown in \cite{pe2}, we get controllability of the target. 
Our results also apply to certain classes of nonsmooth fat targets and our second order sufficient conditions are also necessary for fat targets as shown in \cite{so2}.
When the target is not a point, then higher derivatives of $u$, such as the curvature of the target for instance, are important as well as the fact that one can reach the target at higher order with just one vector field with no need of exploiting the Lie algebra. This fact is incorporated in our sufficient conditions.
The concept of positive basis of a vector space was used in Petrov \cite{pe2} to determine that a point is first order STLA if the available vector fields of a nonlinear control system form at the point a positive basis of $\R^n$. Our result is an extension of that classical statement for sufficient conditions of any order and targets of any dimension.
Often in the literature Taylor estimates of the trajectories and of the distance function from the target are obtained separately and next combined. 
We remark on one hand that a single expansion of a scalar function (i.e. the distance function from the target) will not be enough to obtain STLA of low dimensional targets with assumptions on the data only at $x_o$, and on the other that
the derivative of the composition $u(x_t)$ can be easily expressed in terms of higher order hamiltonians. 
In the case of the point target, we find in our proof either the classical Chow-Rashevski \cite{cho} controllability of symmetric systems and the one for affine systems of Frankowska \cite{fra} and Kawsky \cite{kaw} but our main focus is for higher dimensions of the target.
We also notice that usually in the literature, the proof that a target is STLA follows different paths in the case of the point and of fat targets.
Here we give a unified presentation for the two cases as well as for all intermediate dimensions of the target.

Our approach initiated in \cite{so1} where we studied symmetric systems and \cite{so2} where we considered general nonlinear systems. In both papers we consider a fat target and find simple algebraic second order sufficient conditions to show that the target is STLA with trajectories of at most one switch and a degenerate elliptic differential inequality to express them.
We expect to be able to use a similar approach to prove small time local capturability of pursuit evasion differential games, where higher order sufficient conditions are completely missing from the literature, as far as we know. Also control problems with state constraints can be naturally approached with our methods. We will undertake such problems in the future.

Most attainability results in the literature concern controllability to a point. 
Besides what we already mentioned,
Liverowskii \cite{li} extended the approach by Petrov, see also \cite{pe}, to prove second order sufficient and necessary conditions for the point. 
For other results on higher order necessary and sufficient conditions for affine systems, we refer to Bianchini and Stefani \cite{bs,bs2,bs3,ste} and to Krastanov \cite{kr2} for dynamics on manifords. 
We also mention the chapter on controllability of control systems in the book by Coron \cite{cor} where many additional references can be found. For fat targets,
Bardi-Falcone \cite{BaFa90} found necessary and sufficient first order conditions, while more recently our paper with Bardi and Feleqi \cite{bfs} derives necessary second order conditions and drops one level of regularity for sufficient conditions, by using the generalized Lie brackets of Rampazzo-Sussman \cite{rasu}. 
For affine systems with drift vanishing on the target, Krastanov and Quincampoix \cite{kr,kr2,kr3} proved higher order sufficient STLA conditions of a different nature for nonsmooth fat targets only involving the normal vectors and using the idea of variations of the reachable set.
The work by Marigonda and Rigo \cite{ma,ma2}
pointed out the importance of the geometry of the target, in particular its curvature, and studied higher order attainability of certain nonsmooth targets for affine systems with nontrivial drift and local H\"older continuity of the minimum time function. Later with Le \cite{ma3} they studied higher order sufficient conditions focusing on the presence of state constraints.
For attainability of a smooth target with intermediate dimension much fewer results are available in the classical literature. We know of the papers by Bacciotti \cite{ba} for first order conditions and the author \cite{so} for second order conditions for symmetric systems and smooth targets of any dimension, possibly with a boundary. 
We finally mention Motta and Rampazzo \cite{mr} who construct higher order hamiltonians adding iterated Lie brackets as additional vector fields to prove global asymptotic controllability. Their Hamiltonian is still a first order operator in contrast to ours. Recently Albano, Cannarsa and Scarinci \cite{alcasc1} show for some symmetric systems that the set where the local Lipschitz continuity of the minimum time function fails is the union of singular trajectories, and that it is analytic except on a null set. For results in this direction see also the author in \cite{so3}.

We outline the contents of the paper. In Section 2 we develop the basic asymptotic formulas for trajectories and some calculus in order to understand relationships with previous literature and build the tools to check the examples. Sections 3--4 are the core of the paper and present the sufficient conditions for STLA. We work out three cases separately: the case of the fat target because it is easier and does not use the Lemma in the Appendix; the case of the point because it does not use an extra assumption and develops some preliminary tool needed in the general proof. Finally the general case of intermediate dimensions of the target is dealt with. Section 5 shows some examples where we apply our results. The Appendix contains a revisited Petrov's Lemma which is a key step in the arguments.

\section{Preliminaries and notations: Hamiltonian asymptotic formulas}

We initiate with some basic facts on Taylor estimates to establish the decrease rate of scalar functions along trajectories of dynamical systems. This will illustrate what we mean by a Hamiltonian approach. Next we will proceed with trajectories of control systems.
What we do has connections with classical formalisms to compute the asymptotics of flows of dynamical systems, such as for instance chronological calculus by Agrachev and Gamkrelidze, see e.g. \cite{ag2,ag1,ag3}, or products of exponentials by Sussmann, see e.g. \cite{su,ks}, but in that work we did not find what we specifically need here.
We start with a function $u:\R^n\to\R$ and consider the trajectories of a dynamical system
\begin{equation}
\left\{\begin{array}{ll}\dot x_t=f(x_t),\\
x_0\in {\mathbb R}^n,
\end{array}\right.
\end{equation}
where $f\in C(\R^n;\R^n)$ is a vector field.
For integer $k\geq1$, $f\in C^{k-1}(\R^n;\R^n)$, we introduce the Hamiltonian operators $H^{(h)}_f:C^h(\R^n)\to C(\R^n)$, $h=0,\dots,k$,
$$H^{(0)}_fu\equiv u,\quad H_fu=f\cdot\nabla u,\quad H^{(h+1)}_fu=H_f\circ H^{(h)}_fu.$$
Note that $H_fu$ in the literature also appears as Lie derivative of $u$ (or pre-hamiltonian). Observe that, in any interval where the trajectory is defined,
$$\frac d{dt}u(x_t)=H_fu(x_t),\quad \frac {d^k}{dt^k}u(x_t)=H^{(k)}_fu(x_t)$$
and therefore the following property easily follows. 
\begin{lem} If $f\in C^{k-1}(\R^n;\R^n)$ is Lipschitz continuous and $u\in C^k(\R^n)$, then the following Taylor formula holds
\begin{equation}\label{eqtaylorfirst}
u(x_t)=\sum_{i=0}^{k}\frac{t^i}{i!}H^{(i)}_fu(x_0)+t^ko(1),\quad\hbox{as }t\to0.
\end{equation}
\end{lem}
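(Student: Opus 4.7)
The plan is to reduce the statement to Taylor's theorem with Peano remainder applied to the scalar function $t\mapsto u(x_t)$, the only real work being to verify that this function is of class $C^k$ near $0$ and that its successive derivatives coincide with $H_f^{(j)}u$ evaluated along the trajectory.

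First I would check the regularity of the trajectory. Since $f$ is Lipschitz, the ODE $\dot x_t=f(x_t)$ has a unique $C^1$ solution on some interval around $0$. A standard bootstrap argument then gives $x_t\in C^k$: if it is already known that $x_t\in C^j$ for some $j\le k-1$, then $\dot x_t=f(x_t)$ is the composition of a $C^j$ map with $f\in C^{k-1}\subset C^j$, hence itself $C^j$, so $x_t\in C^{j+1}$. Next, I would check inductively that $H_f^{(j)}u\in C^{k-j}(\R^n)$ for $0\le j\le k$, using that $H_f v=f\cdot\nabla v$ loses exactly one derivative (one from $\nabla$, compensated by the fact that $f\in C^{k-1}$ is one less regular than $u\in C^k$); in particular $H_f^{(k)}u$ is continuous and each $H_f^{(j)}u$ with $j<k$ is at least $C^1$.

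The key step is the inductive identity
\begin{equation*}
\frac{d^j}{dt^j}u(x_t)=H_f^{(j)}u(x_t),\qquad 0\le j\le k,
\end{equation*}
valid on the interval where $x_t$ exists. For $j=0$ it is trivial, and for $j=1$ it is the chain rule: $\frac{d}{dt}u(x_t)=\nabla u(x_t)\cdot\dot x_t=f(x_t)\cdot\nabla u(x_t)=H_fu(x_t)$. For the induction step, assume the formula holds for some $j<k$. Since $H_f^{(j)}u\in C^1$ by the regularity discussion, the chain rule applies again and gives
\begin{equation*}
\frac{d^{j+1}}{dt^{j+1}}u(x_t)=\nabla(H_f^{(j)}u)(x_t)\cdot f(x_t)=H_f(H_f^{(j)}u)(x_t)=H_f^{(j+1)}u(x_t),
\end{equation*}
as required.

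Once this is in place, the scalar function $\varphi(t):=u(x_t)$ is $k$ times continuously differentiable in a neighborhood of $0$, with $\varphi^{(i)}(0)=H_f^{(i)}u(x_0)$ for $0\le i\le k$. Taylor's formula with Peano remainder for $C^k$ functions of one variable then yields
\begin{equation*}
\varphi(t)=\sum_{i=0}^{k}\frac{t^i}{i!}\varphi^{(i)}(0)+o(t^k),\qquad t\to0,
\end{equation*}
which is exactly \eqref{eqtaylorfirst}. There is no real obstacle; the only point needing attention is the bookkeeping on regularity, namely that each application of $H_f$ consumes one derivative so that iterating up to order $k$ is permitted under the hypotheses $f\in C^{k-1}$ and $u\in C^k$.
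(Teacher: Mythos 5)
Your proof is correct and follows essentially the same route as the paper: verify that $\varphi(t)=u(x_t)$ is of class $C^k$ with $\varphi^{(j)}(t)=H_f^{(j)}u(x_t)$, then apply one-variable Taylor's theorem. One small but worth-noting difference: you invoke the Peano form of the remainder, which gives only a pointwise $o(1)$ for fixed $x_0$, whereas the paper (in the remark immediately following the lemma) uses the Lagrange form, writing the remainder as $\frac{1}{k!}\bigl(H_f^{(k)}u(x_s)-H_f^{(k)}u(x_0)\bigr)$ for some $s\in(0,t)$; combined with the a priori bound $|x_s-x_0|\leq Ms$ and continuity of $H_f^{(k)}u$, this yields convergence that is \emph{locally uniform in} $x_0$, a uniformity that is then used in the proof of Lemma 2.3 when the formula is applied at the moving initial point $x_t$. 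Your argument proves the lemma as literally stated, but if you intend to reuse it as the paper does, you should switch to the Lagrange (mean-value) form of the Taylor remainder so the uniformity comes for free.
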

\begin{rem} The remainder term $o(1)$ in (\ref{eqtaylorfirst}) can be expressed as 
$$\frac{1}{k!}\left(H^{(k)}_fu(x_s)-H^{(k)}_fu(x_0)\right),$$
for a suitable $s\in(0,t)$ and then it goes to 0 as $t\to0$ locally uniformly for $x_0\in\R^n$.
\end{rem}
We notice that the operator $H^{(2)}_f:C^2(\R^n)\to C(\R^n)$ can be explicitely written as
$$H^{(2)}_fu=H_f\circ H_fu=f\cdot\nabla(f\cdot\nabla u)=\hbox{Tr}(D^2u\;f\otimes f)+Df\;f\cdot\nabla u$$
and it is degenerate elliptic on $u$. Likewise ${\mathcal H}_{f,k}:=H^{(k)}_f$ is a partial differential operator of order $k$.

When $f,u$ are $C^\infty$ and $u(x_t)$ is analytic, we obtain, for small $t$,
$$u(x_t)=\sum_{k=0}^{+\infty}\frac{t^k}{k!}H^{(k)}_fu(x_0)=:e^{tH_f}u(x_0),$$
introducing an exponential notation for the Hamiltonian.
We now want to apply the same approach to families of vector fields and consider what we name one-switch (balanced) trajectories.
Let $t>0$ and $f,g\in C(R^n;\R^n)$ be two vector fields. 
Consider a Caratheodory solution of
\begin{equation}\label{eqswitch}
\dot x_s=\left\{\begin{array}{ll}f(x_s),\quad&\mbox{ if }s\in[0,t),\\
g(x_s),&\mbox{ if }s\in[t,2t],\end{array}\right.
\quad x_0\in\R^n.
\end{equation}
Note that $x_s[t]$ is indeed a family of trajectories indexed with the parameter $t>0$, although we will usually hide the parameter $t$.
We want to describe the variation at the end point $u(x_{2t})-u(x_0)$. 
\begin{lem}\label{lem23bb} Let $f,g\in C^{k-1}(\R^n;\R^n)$, $f,g$ Lipschitz, and $u\in C^k(\R^n)$. Then the one switch trajectory (\ref{eqswitch}) satisfies
the following asymptotic formula
\begin{equation}\label{eqtaylorsecond}
u(x_{2t})=u(x_0)+\sum_{i=1}^{k}\frac{t^i}{i!}(H_f\boxplus H_g)^iu(x_0)+t^ko(1),\quad\hbox{as }t\to0,
\end{equation}
where the remainder tends to 0 locally uniformly with respect to $x_0$. Here we define
$$(H_f\boxplus H_g)^iu:= \sum_{i=0}^{m}\left(\begin{array}{c}m\\i\end{array}\right)
H^{(m-i)}_f\circ H^{(i)}_gu.$$
\end{lem}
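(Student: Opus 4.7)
The plan is to iterate the Taylor formula of the previous lemma, applying it separately to each piece of the one--switch trajectory and then reorganizing the terms by total order in~$t$. First I would observe that, by construction, on $[0,t]$ the curve $s\mapsto x_s$ is a $C^1$ trajectory of the field~$f$, while on $[t,2t]$ it is a $C^1$ trajectory of~$g$ starting at the intermediate point~$x_t$. Since $f,g$ are Lipschitz, the map $x_0\mapsto x_t$ is uniformly continuous in $t$ on compact sets, so all remainders that are locally uniform in the initial point will remain locally uniform in $x_0$ after composition.

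The first step is to apply (\ref{eqtaylorfirst}) with vector field $g$ and function $u\in C^k(\R^n)$ on the interval $[t,2t]$ starting at~$x_t$:
\begin{equation*}
u(x_{2t})=\sum_{i=0}^{k}\frac{t^i}{i!}H^{(i)}_g u(x_t)+t^k\,o(1),\qquad t\to 0^+.
\end{equation*}
For each fixed $i\in\{0,\dots,k\}$ I then apply (\ref{eqtaylorfirst}) again, now with the field $f$ and the scalar function $H^{(i)}_g u$, expanded on $[0,t]$ starting at $x_0$. The regularity required is $f\in C^{(k-i)-1}$ and $H^{(i)}_g u\in C^{k-i}$; the latter holds because $H^{(i)}_g u$ is built from derivatives of $u$ up to order $i$ and of $g$ up to order $i-1$, so our hypotheses $u\in C^k$ and $g\in C^{k-1}$ are exactly what is needed. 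This gives
\begin{equation*}
H^{(i)}_g u(x_t)=\sum_{j=0}^{k-i}\frac{t^j}{j!}H^{(j)}_f\!\circ\! H^{(i)}_g u(x_0)+t^{k-i}\,o(1).
\end{equation*}

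The second step is combinatorial. Substituting the inner expansion into the outer one and grouping all contributions of total order $t^m$ with $m=i+j$, I obtain
\begin{equation*}
u(x_{2t})=\sum_{m=0}^{k}t^m\!\!\sum_{i=0}^{m}\frac{1}{i!(m-i)!}H^{(m-i)}_f\!\circ\! H^{(i)}_g u(x_0)+t^k\,o(1),
\end{equation*}
where each cross product $t^i\cdot t^{k-i}o(1)$ in the remainder collapses into a single $t^k o(1)$. Factoring $1/m!$ produces the binomial coefficient $\binom{m}{i}$, which is precisely the definition of $(H_f\boxplus H_g)^m u$; isolating the $m=0$ term as $u(x_0)$ yields (\ref{eqtaylorsecond}). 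Finally, since $x_t\to x_0$ locally uniformly and the remainders in the first lemma are locally uniform in the initial point, the resulting $o(1)$ is locally uniform with respect to $x_0$.

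The only genuinely delicate point is the bookkeeping of the remainder: one must check that plugging the ``inner'' expansion of $H^{(i)}_g u(x_t)$ (with remainder $t^{k-i}o(1)$) into the ``outer'' sum weighted by $t^i/i!$ really produces one clean $t^k o(1)$ term, and that the locally uniform character of the remainders survives the composition. Both follow from the Remark after the previous lemma applied on a compact neighborhood of $x_0$ large enough to contain $\{x_s[t] : s\in[0,2t],\ t\in[0,t_0]\}$ for some small $t_0>0$; everything else is algebra with binomial coefficients.
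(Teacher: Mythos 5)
Your proposal is correct and follows essentially the same route as the paper: expand $u(x_{2t})$ first via the $g$-flow on $[t,2t]$, then expand each coefficient $H^{(i)}_gu(x_t)$ via the $f$-flow on $[0,t]$ to order $k-i$, and regroup by total degree $m=i+j$ to produce the binomial coefficients. Your additional remarks on the exact regularity needed ($H^{(i)}_gu\in C^{k-i}$ from $u\in C^k$, $g\in C^{k-1}$) and on why the remainders stay locally uniform under composition are accurate and only make explicit what the paper leaves implicit.
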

\begin{proof}
In the assumptions, by (\ref{eqtaylorfirst}) we know that, in the $[t,2t]$ interval,
$$u(x_{2t})=\sum_{i=0}^{k}\frac{t^i}{i!}H^{(i)}_gu(x_t)+t^ko(1),$$
as $t\to0$, where the remainder goes to 0 uniformly with respect to the initial point, which is $x_t$ in this case. 
For $i=1,\dots,k$, let $v=H^{(i)}_gu$, likewise
$$H^{(i)}_gu(x_t)=v(x_t)=\sum_{j=0}^{k-i}\frac{t^j}{j!}H^{(j)}_fv(x_0)+t^{k-i}o(1)
=\sum_{j=0}^{k-i}\frac{t^j}{j!}H^{(j)}_f\circ H^{(i)}_gu(x_0)+t^{k-i}o(1),$$
thus finally
$$\begin{array}{ll}
u(x_{2t})&=\sum_{i=0}^{k}\frac{t^i}{i!}\left(\sum_{j=0}^{k-i}\frac{t^j}{j!}H^{(j)}_f\circ H^{(i)}_gu(x_0)+t^{k-i}o(1)\right)+t^ko(1)\\
&=\sum_{i=0}^{k}\left(\sum_{j=0}^{k-i}\frac{t^{i+j}}{i!j!}H^{(j)}_f\circ H^{(i)}_gu(x_0)\right)+t^ko(1)
\\&
=\sum_{m=0}^{k}\frac{t^{m}}{m!}\sum_{i=0}^{m}\left(\begin{array}{c}m\\i\end{array}\right)
H^{(m-i)}_f\circ H^{(i)}_gu(x_0)+t^ko(1).
\end{array}$$
\end{proof}

Notice that $H_f\boxplus H_g=H_f+H_g=H_{f+g}$
is itself a Hamiltonian operator. 
However we caution the reader that, if $m\geq2$,
$$(H_f\boxplus H_g)^m\neq H_{f+g}^{(m)},$$
in general.
Indeed if $[f,g]=Dgf-Dfg$ represents the Lie bracket of two vector fields, then we immediately compute
$$H_{[f,g]}u=[f,g]\cdot \nabla u=H_f\circ H_gu-H_g\circ H_fu=\frac12((H_f\boxplus H_g)^2u-(H_g\boxplus H_f)^2u)$$
and therefore, as an example,
\begin{equation}\label{eqbasich}
(H_f\boxplus H_g)^2u=H^{(2)}_fu+2H_f\circ H_gu+H^{(2)}_gu=H^{(2)}_{f+g}u+[f,g]\cdot\nabla u.
\end{equation}
Therefore Lie brackets are part of the game, as they should be. 
Also observe that for $\lambda>0$ by definition we have that
$$(H_{\lambda f}\boxplus H_{\lambda g})^mu=\lambda^m(H_f\boxplus H_g)^mu,$$
so we have a homogeneity property. We notice the following sometimes useful algebraic properties
$$H_{-f}u=-H_fu;\;(H_{f}\boxplus H_{f})^ku=2^kH^{(k)}_fu,\;(H_{-f}\boxplus H_{-g})^ku=(-1)^k(H_{f}\boxplus H_{g})^ku.$$

\begin{rem}
It will be important to discuss the sign of the quantities $(H_f\boxplus H_g)^iu(x_0)$, choosing $f,g$ appropriately. In the special case $k=2$, from (\ref{eqbasich}) we get that if $(H_f\boxplus H_g)^2u(x_0)<0$ then either $H^{(2)}_{f+g}u(x_0)<0$ or $[f,g]\cdot \nabla u(x_0)\neq0$. We can check that the implication can be somewhat reversed. 
Let
$$S(x)=\left(\begin{array}{cc}
H_f^{(2)}u(x)\quad&H_f\circ H_gu(x)\\
H_g\circ H_fu(x)&H^{(2)}_gu(x)
\end{array}\right)$$
and observe that $S(x)$ is symmetric if and only if $[f,g]\cdot \nabla u(x)=0$. 
Indeed in \cite{so1} we proved the following.\\
{\bf Proposition} If $[f,g]\cdot \nabla u(x_0)\neq0$ then there is an eigenvector $a_1=\;^t(a_{1,1},a_{2,1})$ of $\;^tS(x_0)S(x_0)$ with strictly positive eigenvalue $\lambda^2$ ($\lambda>0$), such that if $a_2=\;^t(a_{1,2},a_{2,2})=-S(x_0)a_1/\lambda$, then $a_1\neq -a_2$ and
$$(H_{a_{1,2}f+a_{2,2}g}\boxplus H_{a_{1,1}f+a_{2,1}g})^2u(x_0)<0.
$$
\end{rem}

The contents of Lemma \ref{lem23bb} can be clearly extended to (balanced) switch trajectories with any finite number of switches.
For instance, given three vector fields $f,g,h\in C^{m-1}(\R^n;\R^n)$ and $u\in C^m(\R^n)$, we can also introduce the operators ${\mathcal H}_{f,g,h,m}:C^m(\R^n)\to C(\R^n)$, where
\begin{equation}\label{eqthreeham}
{\mathcal H}_{f,g,h,m}u(x_0)\equiv (H_f\boxplus H_g\boxplus H_h)^mu:=
\sum_{i,j,\;i+j\leq m}
\left(\begin{array}{c}m\\i,j\end{array}\right)H^{(m-i-j)}_f\circ H^{(j)}_g\circ H^{(i)}_hu.
\end{equation}
We recall that the trinomial coefficients are defined as
$$\left(\begin{array}{c}m\\i,j\end{array}\right)=\frac{m!}{(m-i-j)!i!j!}=
\left(\begin{array}{c}m\\i\end{array}\right)\left(\begin{array}{c}m-i\\j\end{array}\right).
$$
To clarify the above definition, we notice the following property.
First define the following notation
$$((H_{f}\boxplus H_{g})\boxplus H_{{h}})^ku:=
\sum_{i=0}^k\left(\begin{array}{c}k\\i\end{array}\right)\left(H_f\boxplus H_g\right)^{k-i}\circ H^{(i)}_hu.$$
\begin{prop}
Let $f,g,h\in C^{k-1}(\R^n;\R^n)$ and $u\in C^k(\R^n)$. Then
$$(H_f\boxplus H_g\boxplus H_h)^ku=((H_f\boxplus H_g)\boxplus H_h)^ku.
$$
More in general, if $f_1,\dots,f_{m+1}\in C^{k-1}(\R^n;\R^n)$, then
\begin{equation}\label{eqpowersum}
(H_{f_1}\boxplus \dots\boxplus H_{f_{m+1}})^ku=((H_{f_1}\boxplus \dots\boxplus H_{f_m})\boxplus H_{f_{m+1}})^ku,
\end{equation}
where notations are extended in a straightforward way.
\end{prop}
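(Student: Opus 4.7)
The plan is to verify both identities by direct expansion of the definitions, with the combinatorial content reducing to the standard multinomial identity
$\binom{k}{i}\binom{k-i}{j} = \binom{k}{i,j}$.
The statement is essentially an associativity property of the operation $\boxplus$ raised to the $k$-th power, so no calculus of vector fields enters at all; everything happens at the level of formal compositions of the operators $H^{(\ell)}_{f_i}$, weighted by multinomial coefficients.

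For the three-field identity, I would start from the right-hand side and substitute the two-field definition. By the notation introduced just above the proposition,
$((H_f\boxplus H_g)\boxplus H_h)^ku = \sum_{i=0}^k\binom{k}{i}(H_f\boxplus H_g)^{k-i}\circ H^{(i)}_h u$,
and expanding $(H_f\boxplus H_g)^{k-i}$ via the definition from Lemma~\ref{lem23bb} gives
$\sum_{i=0}^k\sum_{j=0}^{k-i}\binom{k}{i}\binom{k-i}{j} H^{(k-i-j)}_f\circ H^{(j)}_g\circ H^{(i)}_h u$.
Using $\binom{k}{i}\binom{k-i}{j} = \binom{k}{i,j}$ and re-indexing the double sum by the constraint $i+j\le k$ reproduces exactly the defining expression (\ref{eqthreeham}) of $(H_f\boxplus H_g\boxplus H_h)^ku$.

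For the general identity (\ref{eqpowersum}) I would proceed by induction on $m$, with the base case $m=1$ being the two-field definition itself. For the inductive step, assume the general $m$-fold power has the canonical multinomial form
$(H_{f_1}\boxplus\cdots\boxplus H_{f_m})^{\ell}u = \sum \binom{\ell}{i_1,\dots,i_m} H^{(i_m)}_{f_1}\circ\cdots\circ H^{(i_1)}_{f_m}u$,
where the sum is over $i_1+\cdots+i_m=\ell$. Then the defining expansion of the outer $\boxplus$ with $H_{f_{m+1}}$, followed by substitution of the induction hypothesis into the inner $(H_{f_1}\boxplus\cdots\boxplus H_{f_m})^{k-i}$, produces a nested sum whose coefficient at the composition $H^{(i_m)}_{f_1}\circ\cdots\circ H^{(i_1)}_{f_m}\circ H^{(i)}_{f_{m+1}}$ is $\binom{k}{i}\binom{k-i}{i_1,\dots,i_m}$. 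By the multinomial identity this equals $\binom{k}{i_1,\dots,i_m,i}$, which matches the canonical $(m+1)$-fold definition.

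The main obstacle is purely bookkeeping: the excerpt defines $\boxplus$ explicitly only for two and three fields, so I would first need to fix and record the canonical definition of $(H_{f_1}\boxplus\cdots\boxplus H_{f_{m+1}})^k$ as the multinomial sum above, and then verify that the order of composition of the $H^{(i_j)}_{f_j}$'s is tracked consistently across the peeling-off step (in particular, $H^{(i)}_{f_{m+1}}$ must remain innermost after the outer expansion so that the induction hypothesis applies to the correct factor). Once this convention is in place the combinatorics is elementary and no further analytic input is needed.
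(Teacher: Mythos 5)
Your proposal is correct and follows essentially the same route as the paper: a direct expansion using the multinomial identity $\binom{k}{i}\binom{k-i}{j}=\binom{k}{i,j}$ for the three-field case, and induction on $m$ for the general statement. If anything you are slightly more careful than the paper, which leaves the $m$-fold definition implicit and dismisses the general case with ``can be proved similarly by induction''; your explicit fixing of the canonical multinomial form and attention to the composition order (keeping $H^{(i)}_{f_{m+1}}$ innermost) make the induction step fully rigorous.
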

\begin{proof}
It is a matter of computing things. For three vector fields,
$$\begin{array}{ll}
(H_f\boxplus H_g\boxplus H_h)^k&=\sum_{i,j,\;i+j\leq k}
\left(\begin{array}{c}k\\i,j\end{array}\right)H^{(k-i-j)}_f\circ H^{(j)}_g\circ H^{(i)}_hu\\
&=\sum_{i=0}^k\left(\begin{array}{c}k\\i\end{array}\right)\left(\sum_{j=0}^{k-i}
\left(\begin{array}{c}k-i\\j\end{array}\right)H^{(k-i-j)}_f\circ H^{(j)}_g\right)\circ H^{(i)}_hu
\\&
=\sum_{i=0}^k\left(\begin{array}{c}k\\i\end{array}\right)\left(H_f\boxplus H_g\right)^{k-i}\circ H^{(i)}_hu.
\end{array}$$
The general case can be proved similarly by induction.
\end{proof}
By the previous statement, the $k-$th power of the sum of $m+1$ Hamiltonians 
can either be defined recursively by the right hand side of (\ref{eqpowersum}) or explicitly in the corresponding way to (\ref{eqthreeham}), by using the multinomial coefficients.
The previous operators that we have defined appear in the Taylor estimates of trajectories in the following way. Given vector fields $f_1,\dots,f_m\in C(\R^n;\R^n)$, a (balanced) trajectory with $(m-1)$-switches is a Caratheodory solution of
\begin{equation}\label{eqtwoswitch}
\dot x_s=\left\{\begin{array}{ll}f_1(x_s),\quad&\mbox{ if }s\in[0,t),\\
f_2(x_s),&\mbox{ if }s\in[t,2t),\\
\dots\;,\\
f_m(x_s),&s\in[(m-1)t,mt],
\end{array}\right.
\quad x_0\in\R^n.
\end{equation}
It is now clear by construction that by using an elementary induction argument on the parameters, we obtain the following result.
\begin{prop}\label{proptaylor} Let $f_1,\dots,f_m\in C^{k-1}(\R^n;\R^n)$ and $u\in C^k(\R^n)$. Then the (balanced) trajectory with $(m-1)$-switches (\ref{eqtwoswitch}) satisfies
the following asymptotic formula at the end point
\begin{equation}\label{eqtaylormult}
u(x_{mt})=u(x_0)+\sum_{i=1}^{k}\frac{t^i}{i!}(H_{f_1}\boxplus\dots\boxplus H_{f_m})^iu(x_0)+t^ko(1),\quad\hbox{as }t\to0,
\end{equation}
where the remainder tends to 0 locally uniformly in $x_0$. 
\end{prop}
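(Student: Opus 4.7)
The plan is to proceed by induction on the number $m$ of vector fields (equivalently, on the number of switches $m-1$). The base case $m=1$ is precisely the one-field Taylor formula (\ref{eqtaylorfirst}), and the case $m=2$ is Lemma \ref{lem23bb}. For the inductive step, suppose the formula holds for the $(m-2)$-switch trajectory driven by any $(m-1)$ Lipschitz vector fields of class $C^{k-1}$, applied to any $C^k$ test function, and consider the trajectory (\ref{eqtwoswitch}) with $m-1$ switches.

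The key idea is to cut the trajectory at time $(m-1)t$. On the final interval $[(m-1)t,mt]$ only $f_m$ acts, so the single-field Taylor formula (\ref{eqtaylorfirst}), applied with initial point $x_{(m-1)t}$ and final time $t$, yields
$$u(x_{mt})=\sum_{j=0}^{k}\frac{t^j}{j!}H^{(j)}_{f_m}u(x_{(m-1)t})+t^ko(1),\quad\hbox{as }t\to0.$$
Since the trajectory stays inside a compact set determined by the Lipschitz bound on the $f_i$, the remainder is locally uniform in $x_0$. For each $j=0,\dots,k$ the function $v_j:=H_{f_m}^{(j)}u$ belongs to $C^{k-j}$, because each application of $H_{f_m}$ uses one derivative of $u$ and the $C^{k-1}$ field $f_m$. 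Therefore the inductive hypothesis is applicable to $v_j$ on the initial $(m-2)$-switch piece driven by $f_1,\dots,f_{m-1}$, but only to order $k-j$:
$$v_j(x_{(m-1)t})=\sum_{i=0}^{k-j}\frac{t^i}{i!}(H_{f_1}\boxplus\cdots\boxplus H_{f_{m-1}})^iv_j(x_0)+t^{k-j}o(1).$$

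Substituting this into the previous display and observing that $t^j\cdot t^{k-j}=t^k$, all remainder contributions collapse into a single $t^ko(1)$ term. Reindexing the resulting double sum by the total power $\ell=i+j$ gives
$$u(x_{mt})=u(x_0)+\sum_{\ell=1}^{k}\frac{t^\ell}{\ell!}\sum_{j=0}^{\ell}\binom{\ell}{j}(H_{f_1}\boxplus\cdots\boxplus H_{f_{m-1}})^{\ell-j}\circ H_{f_m}^{(j)}u(x_0)+t^ko(1).$$
By the recursion formula (\ref{eqpowersum}) proved immediately above, the inner sum is precisely $(H_{f_1}\boxplus\cdots\boxplus H_{f_m})^\ell u(x_0)$, which closes the induction.

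The only real issue is bookkeeping: one must check that the different truncation orders $k-j$ coming from the induction, and the different regularities $C^{k-j}$ of the auxiliary functions $v_j$, all fit together so that every error term is absorbed in a single $t^ko(1)$ that is locally uniform in $x_0$. This is straightforward once one notes that the switching map $x_0\mapsto x_{(m-1)t}$ is continuous and maps compact sets into compact sets (uniformly in $t$ small), so local uniformity in $x_{(m-1)t}$ in the outer expansion transfers to local uniformity in $x_0$.
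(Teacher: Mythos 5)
Your proof is correct and follows exactly the route the paper intends: it says the result follows "by an elementary induction argument on the parameters," and the natural induction is precisely yours — cut the trajectory after the first $m-1$ stages, apply (\ref{eqtaylorfirst}) on the last interval, then apply the inductive hypothesis to each $v_j=H_{f_m}^{(j)}u$ to order $k-j$, and collapse the double sum via the recursion (\ref{eqpowersum}). This is the direct generalization of the paper's own proof of Lemma \ref{lem23bb}, and your care about the regularity $v_j\in C^{k-j}$ and about transferring local uniformity of the remainder from $x_{(m-1)t}$ back to $x_0$ matches the paper's handling of the $m=2$ case.
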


\subsection{Nonlinear control systems}
We now consider the nonlinear control system (\ref{eqsystem}),
where $x_.$ is the state and $a_.$ is the control, determined by a controlled vector field $F$.
The general assumptions we make here will stand for the rest of the paper.
We assume for convenience that $A$ is a compact subset of a metric space, $F:\R^n\times A\to\R^n$ is continuous and locally Lipschitz continuous in the variable $x$ in a neighborhood of $x_o\in\R^n$, that is we can find $R, L>0$ such that
\begin{equation}\label{eqlip}
|F(x,a)-F(y,a)|\leq L|x-y|,\end{equation}
for all $x,y\in B_R(x_o)$, $a\in A$. In particular there is $M>0$ such that
$$|F(x,a)|\leq|F(x,a)-F(x_o,a)|+|F(x_o,a)|\leq L|x-x_o|+|F(x_o,a)|\leq M,$$
for all $x\in B_R(x_o)$, $a\in A$. Therefore if $(x_s)_{s\in[0,t]}$ is a trajectory solution of (\ref{eqsystem}) then
$$|x_s-x|\leq Ms,\quad s\in[0,t],$$
and $|x_s-x_o|\leq R$ if $|x-x_o|\leq R/2$ and $t\leq R/(2M)=:\sigma$.

To the control system we associate a target $\mathcal T$, a closed subset of $\R^n$. The target will be assumed smooth, in the sense that given $x_o\in\mathcal T\backslash \hbox{int}{\mathcal T}$, it is locally defined in $B_R(x_o)$ by a family of $h(\leq n)$ equations (a $h-$dimensional manifold)
$$\left\{\begin{array}{l}
u_1(x)=0,\\
\dots,\\
u_h(x)=0,
\end{array}\right.$$
possibly, when $h\leq n-1$, with an additional inequality (a $h-$dimensional manifold with boundary)
$$u_{h+1}(x)\leq0,$$
where $u_1,\dots,u_{h+1}\in C^1(\R^n)$ at least, are given functions. We always assume that the jacobian of either one of the functions involved in the definition of $\mathcal T$, $u=(u_1,\dots,u_h)$ or $\hat u=(u_1,\dots,u_{h+1})$ has full rank at $x_o$. 

We are interested in the property that $\mathcal T$ is small time local attainable (STLA for short) in the neighborhood of the given point $x_o$, namely the continuity at $x_o$ of the minimum time function
$$T(x)=\inf_{a_.\in L^\infty((0,+\infty);A)}t_x(a)(\leq+\infty),$$
where $t_x(a)=\min\{t\geq0:x_t\in\mathcal T\}(\leq+\infty)$, $x_t$ trajectory of the control system corresponding to the control $a_\cdot$ and initial point $x$. Note that $T(x_o)=0$. It is well known that the continuity of $T$ at all boundary points of the target propagates in the whole of the domain of $T$ (the reachable set) which is then an open set, see e.g. Bardi and the author \cite{baso2} or \cite{bfs}.
We will seek continuity of $T$ at $x_o$ by proving estimates of the form (\ref{eqestt})
in the neighborhood of $x_o$, for a suitable integer $k$. 

We give some definitions to classify the structure of system (\ref{eqsystem}).
\begin{defin}
We say that the system is convex if for any pair of available vector fields $f,g\in C(\R^n;\R^n)$ any convex combination
$\lambda f+(1-\lambda)g$, for all $\lambda\in[0,1]$
is also available. 

We say that the system is symmetric if it is convex and for any available vector field $f$, then also $-f$ is available.
 

We say that a system is affine (in the control) if it has the structure
$$F(x,a)=f_o(x)+G(x,a),$$
where $G(x,a)$ is symmetric. Usually $f_o$ is called the drift.

We say that a function $u:\R^n\to\R$ has a $k-th$ order decrease rate for the control system at $x_o\in\R^n$ if there are available vector fields $f_1,\dots f_m$ such that
$$(H_{f_1}\boxplus\dots\boxplus H_{f_m})^ku(x_o)<0,\;(H_{f_1}\boxplus\dots\boxplus H_{f_m})^iu(x_o)=0\;\hbox{for all }i=1,\dots,k-1.$$
\end{defin}

Notice that if $A\subset\R^m$ is convex and $F(x,a)=f_o(x)+\sum_{i=1}^ma_if_i(x)$, then the system is convex. If $A$ is moreover symmetric with respect to the origin, then it is affine in the control and symmetric if $f_o\equiv0$.

Observe that if we have as available vector fields $f,-f,g,-g$, then 
we can compute $(H_f\boxplus H_g\boxplus H_{-f}\boxplus H_{-g})u(x)=0$ and
\begin{equation}\label{eqbracket}\begin{array}{l}
(H_f\boxplus H_g\boxplus H_{-f}\boxplus H_{-g})^2u(x)=((H_f\boxplus H_g)\boxplus (H_{-f}\boxplus H_{-g}))^2u(x)\\
=(H_f\boxplus H_g)^2u(x)+(H_{-f}\boxplus H_{-g})^2u(x)+2(H_f\boxplus H_g)\circ (H_{-f}\boxplus H_{-g})u(x)
\\=2(H_f\boxplus H_g)^2u(x)-2H^{(2)}_{f+g}u(x)=
2[f,g]\cdot\nabla u(x).
\end{array}\end{equation}
Therefore if $[f,g]\cdot\nabla u(x_o)<0$ then $u$ has a second order decrease rate at $x_o$.
For general systems however we cannot produce a trajectory having the second order Taylor coefficient proportional to the Lie bracket of any two given available vector fields.

\begin{rem}
The equivalence (\ref{eqbasich}) 
gives rise to the following fully degenerate second order Hamilton-Jacobi operator taking into account pairs of available vector fields
$$\max_{(a_1,a_2)\in A\times A}\begin{array}{ll}\left\{\right.
&-\hbox{Tr}(D^2u\;(F(x,a_1)+F(x,a_2))\otimes (F(x,a_1)+F(x,a_2)))\\
&-\left.\left(D(F(x,a_1)+F(x,a_2))\;(F(x,a_1)+F(x,a_2))+[(F(x,a_1),F(x,a_2))]\right)\cdot\nabla u.
\right\}\end{array}$$
that we introduced in \cite{so1,so2} as a counterpart of the classical Bellman operator, in order to study the second order attainability of fat targets.
If $F(x_o,a)\cdot\nabla u(x_o)=0$ for all $a\in A$, then such operator applied to $u$ is strictly positive at $x_o$ if and only if $u$ has 2nd order decrease rate at $x_o$. If moreover $u\equiv d$ is the signed distance function from $\partial\mathcal T$ which is negative in the interior of the target, then $n(x_o)=\nabla u(x_o)$ is the exterior normal vector and $d$ has second order decrease rate if and only if there are $a_1,a_2\in A$ such that
$$\begin{array}{ll}
\hbox{Tr}(D^2d\;(F(x_o,a_1)+F(x_o,a_2))\otimes (F(x_o,a_1)+F(x_o,a_2)))\\
+\left(D(F(x_o,a_1)+F(x_o,a_2))\;(F(x_o,a_1)+F(x_o,a_2))+[(F(\cdot,a_1),F(\cdot,a_2))](x_o)\right)\cdot n(x_o)<0.
\end{array}$$
The first line above is proportional to the normal curvature in the direction of the average of the vector fields and the second checks the directions of an appropriate vector field and the exterior normal. It has two contributions: the first relative to the average again and the second to their Lie bracket. If in particular $a_1=a_2$ we find that $d$ has 2nd order decrease rate with only one vector field as
$$\hbox{Tr}(D^2d\;F(x_o,a_1)\otimes F(x_o,a_1))\\
+\left(DF(x_o,a_1)\;F(x_o,a_1)\right)\cdot n(x_o)<0.
$$
\end{rem}

We now develop some calculus for the Hamiltonian-Taylor coefficients to show that they can be manipulated as easily as the Lie brackets.
\begin{prop}\label{proprecursive}
Let $f,g\in C^{k}(\R^n;\R^n)$ and $u\in C^{k+1}(\R^n)$. Then for $k=1$, $(H_f\boxplus H_g)u=H_{f+g}u$, and for all $k\geq1$
\begin{equation}\label{eqrecursive}
(H_f\boxplus H_g)^{k+1}u=H_f\circ (H_f\boxplus H_g)^{k}u+(H_f\boxplus H_g)^{k}\circ H_gu.
\end{equation}
\end{prop}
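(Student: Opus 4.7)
The plan is to verify the identity directly from the explicit expansion
$$(H_f\boxplus H_g)^{m}u=\sum_{i=0}^{m}\binom{m}{i}H^{(m-i)}_f\circ H^{(i)}_g u$$
given in Lemma \ref{lem23bb}, rather than by induction on $k$. The base case $k=1$ is immediate: by definition $(H_f\boxplus H_g)u = H_f u + H_g u$, and since $H_f u + H_g u = f\cdot\nabla u + g\cdot\nabla u = (f+g)\cdot\nabla u$, we recognize this as $H_{f+g}u$.

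For the recursive identity, I would substitute the explicit formula into the two terms on the right-hand side of (\ref{eqrecursive}). Applying $H_f$ on the left of the expansion of $(H_f\boxplus H_g)^k u$ bumps each $H^{(k-i)}_f$ up to $H^{(k-i+1)}_f$, giving
$$H_f\circ (H_f\boxplus H_g)^{k}u = \sum_{i=0}^{k}\binom{k}{i}H^{(k+1-i)}_f\circ H^{(i)}_g u,$$
while composing with $H_g$ on the right bumps each $H^{(i)}_g$ to $H^{(i+1)}_g$, and after reindexing $j=i+1$,
$$(H_f\boxplus H_g)^{k}\circ H_g u = \sum_{j=1}^{k+1}\binom{k}{j-1}H^{(k+1-j)}_f\circ H^{(j)}_g u.$$

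Adding the two, the $j=0$ term comes only from the first sum (contributing $H^{(k+1)}_fu$), the $j=k+1$ term comes only from the second (contributing $H^{(k+1)}_g u$), and for $1\le j\le k$ the coefficient of $H^{(k+1-j)}_f\circ H^{(j)}_g u$ is $\binom{k}{j}+\binom{k}{j-1}$. By Pascal's identity this equals $\binom{k+1}{j}$, and both endpoint contributions agree with $\binom{k+1}{0}=\binom{k+1}{k+1}=1$, so the sum collapses to
$$\sum_{j=0}^{k+1}\binom{k+1}{j}H^{(k+1-j)}_f\circ H^{(j)}_g u,$$
which is precisely $(H_f\boxplus H_g)^{k+1}u$ by Lemma \ref{lem23bb}.

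There is really no serious obstacle here; the main bookkeeping issue is only to track index shifts carefully so that the two sums align on a common index $j$ before Pascal's identity is invoked. The regularity hypothesis $f,g\in C^k$, $u\in C^{k+1}$ is exactly what is needed to make sure every composition of Hamiltonian operators appearing up to order $k+1$ is well-defined as a continuous function, so the formal identity is valid pointwise.
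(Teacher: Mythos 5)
Your proof is correct and uses essentially the same argument as the paper: both rely on the explicit binomial expansion of $(H_f\boxplus H_g)^m$ together with Pascal's identity $\binom{k+1}{j}=\binom{k}{j}+\binom{k}{j-1}$. The only difference is direction — the paper starts from the left-hand side $(H_f\boxplus H_g)^{k+1}u$ and splits the binomial coefficients to peel off the two terms, while you expand the two right-hand terms and merge them; this is the same computation read backwards.
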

\begin{proof}
We compute from the definition
$$\begin{array}{c}\sum_{i=0}^{k+1}
\left(\begin{array}{c}k+1\\i\end{array}\right)H^{(k+1-i)}_f\circ H^{(i)}_gu=H^{(k+1)}_fu+H^{(k+1)}_gu+
\sum_{i=1}^{k}\left(\begin{array}{c}k+1\\i\end{array}\right)H^{(k+1-i)}_f\circ H^{(i)}_gu\\
=H^{(k+1)}_fu+H^{(k+1)}_gu+
\sum_{i=1}^{k}\left(\begin{array}{c}k\\i\end{array}\right)H^{(k+1-i)}_f\circ H^{(i)}_gu
+\sum_{i=1}^{k}\left(\begin{array}{c}k\\i-1\end{array}\right)H^{(k+1-i)}_f\circ H^{(i)}_gu\\
=H_f\circ \sum_{i=0}^{k}\left(\begin{array}{c}k\\i\end{array}\right)H^{(k-i)}_f\circ H^{(i)}_gu+
\sum_{i=1}^{k+1}\left(\begin{array}{c}k\\i-1\end{array}\right)H^{(k+1-i)}_f\circ H^{(i)}_gu\\
=H_f\circ (H_f\boxplus H_g)^{k}u+\left(\sum_{i=0}^{k}\left(\begin{array}{c}k\\i\end{array}\right)H^{(k-i)}_f\circ H^{(i)}_g\right)\circ H_gu.
\end{array}$$
\end{proof}

%
Sometimes our Taylor operators simplify to first order. This is important, because it allows to drop the regularity of the target in our statements. The first example is the following.
\begin{rem}\label{remequilibrium}
Suppose that we have two vector fields balanced at $x_0$, i.e. $f(x_0)+g(x_0)=0$. Therefore $H_{f+g}u(x_o)=(f+g)\cdot \nabla u(x_0)=0$ and by Proposition \ref{proprecursive} we also have
$$\begin{array}{ll}
(H_f\boxplus H_g)^2u(x_0)&=H_{[f,g]}u(x_0),\\
(H_f\boxplus H_g)^3u(x_0)&=H_f\circ H_{[f,g]}u(x_0)+H_{[f,g]}\circ H_gu(x_0)=H_{[f,g]}\circ H_gu(x_0)-H_g\circ H_{[f,g]}u(x_0)\\
&=H_{[[f,g],g]}u(x_0)=\hbox{ad}^2_gf(x_0),
\end{array}$$
and then easily by induction
$$
(H_f\boxplus H_g)^{k+1}u(x_0)=(-1)^kH_{\hbox{ad}^k_gf}u(x_0),
$$
where we defined the iterated Lie bracket $\hbox{ad}_gf:=[g,f]$, $\hbox{ad}^{k+1}_gf:=[g,\hbox{ad}^k_gf]$.
Thus in this case the higher order decrease rate condition is reduced to discussing the sign of a first order operator provided by an iterated Lie bracket.
\end{rem}

A higher order Hamiltonian-Taylor operator clearly reduces to first order also when the function $u$ is linear. Let for instance I(x)=x and suppose that $u(x)=I_i(x)$ for some $i=1,\dots,n$. Notice in fact that if $f=(f_1,\dots,f_n)$ is an available vector field, then 
$$H_fI_i(x)=f\cdot \nabla I_i(x)=f_i(x).$$
If we have two vector fields $f,g$, we can then prove the following.
\begin{prop}\label{proplinearconstraint}
Let $f,g$ be two available vector fields with the appropriate regularity required in the operations of the following formulas.
Then for all $k\geq2$ and $i=1,\dots,n$ we have that
$$\begin{array}{c}
(H_f\boxplus H_g)^2I_i(x)=F_2\cdot\nabla I_i(x)=(F_2)_i(x),\\
(H_f\boxplus H_g)^{k+1}I_i(x)=F_{k+1}\cdot\nabla I_i(x)=(F_{k+1})_i(x),
\end{array}$$
where we define recursively $F_2(x)=D(f+g)(f+g)(x)+[f,g](x)$ and $F_{k+1}(x):=DF_k(f+g)(x)+[F_k,g](x)$.
Therefore $(H_f\boxplus H_g)^kI_i$ is a first order operator on $I_i$.
\end{prop}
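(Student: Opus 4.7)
The plan is an induction on $k \geq 2$, resting on the structural observation that $I_i(x) = x_i$ is affine, so every derivative $\partial^\alpha I_i$ with $|\alpha| \geq 2$ vanishes. Since $(H_f \boxplus H_g)^k$ is a linear partial differential operator of order $k$ (by its definition as a polynomial combination of the first-order operators $H_f$ and $H_g$), its action on $I_i$ collapses to its first-order part, so there exists a vector field $F_k$ with $(H_f \boxplus H_g)^k I_i = F_k \cdot \nabla I_i = (F_k)_i$ for every $i$, $F_k$ being simply the vector whose $i$-th component records this first-order coefficient. What remains is to verify that $F_k$ satisfies the claimed recursion.

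For the base case $k = 2$, I would substitute $u = I_i$ into (\ref{eqbasich}): one has $H_{f+g} I_i = (f+g)_i$, so $H^{(2)}_{f+g} I_i = (f+g) \cdot \nabla (f+g)_i = (D(f+g)\cdot(f+g))_i$, and $[f,g] \cdot \nabla I_i = [f,g]_i$, giving $F_2 = D(f+g)(f+g) + [f,g]$. For the step from $k$ to $k+1$, I would apply Proposition \ref{proprecursive} with $u = I_i$:
$$(H_f \boxplus H_g)^{k+1} I_i = H_f\bigl((H_f \boxplus H_g)^k I_i\bigr) + (H_f \boxplus H_g)^k(H_g I_i) = H_f((F_k)_i) + (H_f \boxplus H_g)^k g_i.$$
The first term is immediately $(DF_k \cdot f)_i$. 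For the second, the key point is that the first-order coefficient vector of the differential operator $(H_f \boxplus H_g)^k$ is exactly $F_k$, so the contribution to the first-order coefficient of $g_i$ that matters is $\sum_l (F_k)_l\,\partial_l g_i = (Dg \cdot F_k)_i$. Adding yields $F_{k+1} = DF_k \cdot f + Dg \cdot F_k$, which rearranges via $[F_k,g] = Dg\cdot F_k - DF_k \cdot g$ into the stated form $DF_k(f+g) + [F_k,g]$.

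The main obstacle I anticipate is precisely the second summand in the inductive step: since $g_i$ is not affine, $(H_f \boxplus H_g)^k g_i$ a priori contains further terms pairing the higher-order coefficients $c_\alpha$, $|\alpha|\geq 2$, of $(H_f \boxplus H_g)^k$ with $\partial^\alpha g_i$, and one must argue these pieces do not perturb the first-order coefficient vector that is assembled into $F_{k+1}$. The cleanest route is a simultaneous induction carrying along a description of all the derivative-coefficients $c_\alpha$ of the operator (not merely the first-order one $F_k$), combined with a suitable grouping of the multinomial expansion from Lemma \ref{lem23bb} that isolates how $H_f$ and $H_g$ contribute, reducing the identification of $F_{k+1}$ to the Pascal-type regrouping already used in Proposition \ref{proprecursive}. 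This book-keeping is where the real work lies.
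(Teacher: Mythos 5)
You have pinpointed the weak spot exactly: the second summand $(H_f\boxplus H_g)^k(H_g I_i) = (H_f\boxplus H_g)^k g_i$ applies a $k$-th order operator to the non-affine function $g_i$, and so it contains contributions $c_\alpha\,\partial^\alpha g_i$ with $|\alpha|\geq 2$ beyond the first-order part $F_k\cdot\nabla g_i = (Dg\,F_k)_i$. You hope these pieces ``do not perturb'' what gets assembled into $F_{k+1}$ and propose extra bookkeeping to confirm this. They do perturb it, and the recursion stated in the Proposition is false for powers $\geq 3$: your instinct that the real work lies there was right, but the work cannot be completed because the claim is wrong as written.

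Concretely, already at the first inductive step one computes
$$(H_f\boxplus H_g)^2 g_i = F_2\cdot\nabla g_i + (f+g)^{\mathsf T}D^2 g_i\,(f+g),$$
and hence
$$(H_f\boxplus H_g)^3 I - \bigl(DF_2(f+g)+[F_2,g]\bigr) = D^2g(f+g,f+g),$$
where $D^2g(v,v)$ denotes the vector with $i$-th component $\sum_{j,l}v_j v_l\,\partial_j\partial_l g_i$; this is exactly the $|\alpha|=2$ piece you flagged and it does not cancel. A minimal counterexample: $n=1$, $f\equiv 0$, $g(x)=x^2$. Then $F_2 = g'g = 2x^3$ and the recursion gives $F_3 = Dg\,F_2 = g'F_2 = 4x^4$, while a direct computation gives $(H_g)^3 I = g\,(g\,g')' = 6x^4$.

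The paper's own proof commits the same error silently: in its displayed chain it asserts $(H_f\boxplus H_g)^2\circ H_g I_i(x) = F_2\cdot\nabla g_i(x)$, which is precisely the false identity above. Only the $k=2$ base case is sound (it is just (\ref{eqbasich}) applied to the affine function $u=I_i$). The underlying phenomenon --- that $(H_f\boxplus H_g)^k I_i$ is some function $(F_k)_i(x)$ built from $f,g$ and their derivatives, i.e.\ ``a first-order operator on $I_i$'' --- is true but essentially tautological; what fails is the claimed two-term recursion for $F_k$.
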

\begin{proof}
For the two vector fields $f,g$ we obtain, if $e_i$ is the i-th element of the standard unit basis of $\R^n$, i.e. $x_i=x\cdot e_i$, since $\nabla I_i=e_i$,
$$H_f\circ H_gI_i(x)=f\cdot \nabla g_i(x)=Dgf\cdot e_i=Dgf\cdot\nabla I_i(x).$$
Thus
$$(H_f\boxplus H_g)^2I_i(x)=(D(f+g)(f+g)+[f,g])\cdot\nabla I_i(x)=F_2\cdot\nabla I_i(x).
$$
Next
$$\begin{array}{ll}
(H_f\boxplus H_g)^3I_i(x)&=H_f\circ (H_f\boxplus H_g)^2I_i(x)+(H_f\boxplus H_g)^2\circ H_gI_i(x)\\
&= f\cdot \nabla (F_2)_i(x)+F_2\cdot \nabla g_i(x)
=\{DF_2(f+g)+DgF_2-DF_2g\}\cdot\nabla I_i(x)\\
&=\left(DF_2(f+g)+[F_2,g]\right)\cdot \nabla I_i(x)=F_3\cdot \nabla I_i(x).
\end{array}$$
At this point we find similarly by induction that
$$(H_f\boxplus H_g)^{k+1}I_i(x)=\left(DF_k(f+g)+[F_k,g]\right)\cdot \nabla I_i(x)=F_{k+1}\cdot\nabla I_i(x).
$$
\end{proof}

\subsection{Vector valued H-operators for functions}

When $f_1,\dots.f_m\in C^{j-1}(\R^n;\R^n)$ are vector fields and $u:\R^n\to\R^h$, $u=(u_1,\dots,u_h)\in C^j(\R^n;\R^h)$, we will use the following notation for a vector valued operator
$$(H_{f_1}\boxplus \dots\boxplus H_{f_m})^ju:=\;^t((H_{f_1}\boxplus \dots\boxplus H_{f_m})^ju_r)_{r=1,\dots,h}.$$
Our expansion formulas therefore become statements also for vector valued functions and in particular trajectories themselves.

\begin{prop}\label{proptaylorvector} Let $f_1,\dots,f_m\in C^{k-1}(\R^n;\R^n)$ and $u\in C^k(\R^n;\R^h)$. Then the (balanced) trajectory with $(m-1)$-switches (\ref{eqtwoswitch}) satisfies
the following asymptotic formula in $\R^h$
\begin{equation}\label{eqtaylormultvector}
u(x_{mt})=u(x_0)+\sum_{i=1}^{k}\frac{t^i}{i!}(H_{f_1}\boxplus\dots\boxplus H_{f_m})^iu(x_0)+t^ko(1),\quad\hbox{as }t\to0,
\end{equation}
where the remainder tends to 0 locally uniformly in $x_0$. 
If in particular $h=n$ and $u(x)=I(x)=x$ is the identity function on $\R^n$, then we obtain the following expansion formula for balanced trajectories
\begin{equation}\label{eqtaylortraj}
x_{mt}=x_0+\sum_{i=1}^{k}\frac{t^i}{i!}(H_{f_1}\boxplus\dots\boxplus H_{f_m})^iI(x_0)+t^ko(1),\quad\hbox{as }t\to0.
\end{equation}
\end{prop}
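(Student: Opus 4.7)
The plan is to reduce Proposition \ref{proptaylorvector} to the scalar version, Proposition \ref{proptaylor}, by applying it componentwise, since the vector-valued operator $(H_{f_1}\boxplus\dots\boxplus H_{f_m})^iu$ was defined precisely as the column vector whose $r$-th entry is the scalar operator applied to the $r$-th component $u_r$.

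First, I would fix $r\in\{1,\dots,h\}$ and note that $u_r\in C^k(\R^n)$ since $u\in C^k(\R^n;\R^h)$. The hypotheses of Proposition \ref{proptaylor} are therefore satisfied for $u_r$ with the same vector fields $f_1,\dots,f_m\in C^{k-1}(\R^n;\R^n)$, and along the same trajectory $(x_s)_{s\in[0,mt]}$ defined by (\ref{eqtwoswitch}) I would write
\begin{equation*}
u_r(x_{mt})=u_r(x_0)+\sum_{i=1}^{k}\frac{t^i}{i!}(H_{f_1}\boxplus\dots\boxplus H_{f_m})^iu_r(x_0)+t^ko_r(1),
\end{equation*}
with the remainder $o_r(1)$ tending to $0$ locally uniformly in $x_0$.

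Second, I would stack these $h$ scalar expansions into a single vector identity. Since $h$ is finite, the maximum of the $h$ remainders is still locally uniformly $o(1)$ as $t\to0^+$. Using the definition of the vector-valued operator,
$$(H_{f_1}\boxplus\dots\boxplus H_{f_m})^iu=\;^t\bigl((H_{f_1}\boxplus\dots\boxplus H_{f_m})^iu_r\bigr)_{r=1,\dots,h},$$
the componentwise identities assemble directly into (\ref{eqtaylormultvector}), with the remainder understood as a vector in $\R^h$ whose norm is $o(1)$ locally uniformly in $x_0$.

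Third, for the specialization (\ref{eqtaylortraj}), I would observe that the identity map $I:\R^n\to\R^n$, $I(x)=x$, lies in $C^\infty(\R^n;\R^n)$, so the regularity requirement $u\in C^k(\R^n;\R^h)$ is trivially satisfied with $h=n$. Applying (\ref{eqtaylormultvector}) to $u=I$ and using $I(x_s)=x_s$ on the left-hand side immediately yields (\ref{eqtaylortraj}).

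I do not expect any real obstacle: the scalar Proposition \ref{proptaylor} does all the analytic work, and the vector statement is essentially a notational repackaging. The only minor point to watch is that the $o(1)$ remainders in the $h$ scalar expansions must be combined into a single $o(1)$ term in $\R^h$, but this is immediate because $h$ is a fixed finite integer and local uniformity is preserved under taking the maximum of finitely many remainders.
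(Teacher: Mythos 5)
Your proof is correct and is essentially what the paper (implicitly) does: the vector statement follows immediately from Proposition \ref{proptaylor} applied to each scalar component $u_r$, combined with the definition of the vector-valued operator and the fact that the finitely many remainders merge into a single locally uniform $o(1)$. The specialization to $u=I$ is handled the same way.
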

\begin{rem}\label{remneed}
Consider two points $x_1,x_2\in\R^n$ and a family of available vector fields $f_1,\dots,f_m$. For $0<s\leq t\leq T$ consider the trajectory $x^1_r$ defined in $[0,ms]$, starting at $x_1$ and using the vector  fields $f_i$ on subsequent intervals of length $s$ as in (\ref{eqtwoswitch}) and the trajectory $x^2_r$ defined in $[0,mt]$, starting at $x_2$ and using the vector  fields $f_i$ on subsequent intervals of length $t$. Notice that by Gronwall inequality
$$\begin{array}{c}
|x^1_s-x^2_t|\leq M|t-s|+|x_1-x_2|e^{Ls},\quad
|x^1_{2s}-x^2_{2t}|\leq M|t-s|+|x^1_s-x^2_t|e^{Ls},
\end{array}$$
and then by induction $|x^1_{ms}-x^2_{mt}|\leq C(|t-s|+|x_1-x_2|)$, $C$ depending on $m,T$. The end point of a balanced trajectory of a given family of vector fields is then continuous with respect to the initial point and time. Therefore the remainder $o(1)$ in (\ref{eqtaylormultvector}) can be expressed as a continuous function in $\gamma=\gamma(x_0,t)$ given by $\gamma(x_0,0)=0$ and
$$\gamma(x_0,t)=\frac{k!}{t^k}\left(u(x_{mt})-u(x_0)-\sum_{i=1}^{k}\frac{t^i}{i!}(H_{f_1}\boxplus\dots\boxplus H_{f_m})^iu(x_0)\right),\quad\hbox{if }t>0.$$
\end{rem}
\begin{rem}
To better understand our expansion formula in the vector case, note that if $f,g$ are available vector fields then
$$\begin{array}{ll}
(H_f\boxplus H_g)^3I=H_f\circ (H^{(2)}_{f+g}I+H_{[f,g]}I)+(H^{(2)}_{f+g}+H_{[f,g]})\circ H_gI=
H^{(3)}_{f+g}I\\+(H^{(2)}_{f+g}\circ H_gI-H_g\circ H^{(2)}_{f+g}I)
+(H_f\circ H_{[f,g]}I+H_{[f,g]}\circ H_gI)
\end{array}$$
and now since
$$\begin{array}{c}
H^{(2)}_{f+g}\circ H_gI-H_g\circ H^{(2)}_{f+g}I=H_{f+g}\circ H_{[f,g]}I+H_{[f,g]}\circ H_{f+g}I;\\
H_f\circ H_{[f,g]}I+H_{[f,g]}\circ H_gI=\hbox{ad}^2_fg+H_{[f,g]}\circ H_{f+g}I=\hbox{ad}^2_gf+H_{f+g}\circ H_{[f,g]}I
\end{array}$$
we finally obtain
$$\frac1{3!}(H_f\boxplus H_g)^3I=\frac1{3!}H^{(3)}_{f+g}I+\frac12\left(H_{f+g}\circ(\frac12H_{[f,g]}I)+(\frac12H_{[f,g]})\circ H_{f+g}I\right)+\frac1{12}(\hbox{ad}^2_fg+\hbox{ad}^2_gf).
$$
This confirms the third term in the expansion of the Baker-Campbell-Hausdorff formula.
\end{rem}
The following statement shows how to express a second order operator differently. 
\begin{prop} Let $f_1,\dots,f_m\in C^1(\R^n;\R^n)$ be available vector fields and $u\in C^2(\R^n)$. Then
$$\begin{array}{c}
(H_{f_1}\boxplus \dots\boxplus H_{f_m})^2I=D{(f_1+\dots+f_m)}(f_1+\dots+f_m)+\sum_{1\leq i<j\leq m}[f_i,f_j],\\
(H_{f_1}\boxplus \dots\boxplus H_{f_m})^2u=H^{(2)}_{f_1+\dots+f_m}u+\sum_{1\leq i<j\leq m}[f_i,f_j]\cdot \nabla u.
\end{array}$$
\end{prop}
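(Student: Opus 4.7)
The plan is to prove the scalar identity by induction on $m$ and then deduce the vector version componentwise. The base case $m=2$ is precisely (\ref{eqbasich}). For the inductive step, suppose the formula holds for $m$ fields, and use the associativity from Proposition \ref{proptaylor}'s statement (specifically (\ref{eqpowersum})) to rewrite
\[
(H_{f_1}\boxplus\dots\boxplus H_{f_{m+1}})^2 u = \bigl((H_{f_1}\boxplus\dots\boxplus H_{f_m})\boxplus H_{f_{m+1}}\bigr)^2 u.
\]
Expanding via the binomial definition of $(\cdot\boxplus H_{f_{m+1}})^2$ gives
\[
(H_{f_1}\boxplus\dots\boxplus H_{f_m})^2 u \;+\; 2(H_{f_1}\boxplus\dots\boxplus H_{f_m})\circ H_{f_{m+1}} u \;+\; H^{(2)}_{f_{m+1}} u.
\]

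Next I would observe that at first order $(H_{f_1}\boxplus\dots\boxplus H_{f_m})^1 = H_{f_1+\dots+f_m}$, so setting $F:=f_1+\dots+f_m$ and $g:=f_{m+1}$, the last three terms above may be read as $H^{(2)}_F u + 2 H_F\circ H_g u + H^{(2)}_g u = (H_F\boxplus H_g)^2 u$, minus $H^{(2)}_F u$ which is absorbed into the inductive hypothesis. Applying (\ref{eqbasich}) to the pair $F,g$ gives $H^{(2)}_{F+g}u + [F,g]\cdot\nabla u$, and by bilinearity of the Lie bracket $[F,g] = \sum_{i=1}^{m}[f_i, f_{m+1}]$. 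Combining this with the inductive sum $\sum_{1\leq i<j\leq m}[f_i,f_j]\cdot\nabla u$ produces exactly $\sum_{1\leq i<j\leq m+1}[f_i,f_j]\cdot\nabla u$, while the quadratic pieces assemble into $H^{(2)}_{F+g}u = H^{(2)}_{f_1+\dots+f_{m+1}}u$. This closes the induction.

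For the vector formula, I would simply apply the proven scalar identity coordinate by coordinate to $u=I_r$, $r=1,\dots,n$. The identities $H_F I_r = F_r$ and therefore $H^{(2)}_F I_r = F\cdot\nabla F_r = (DF\cdot F)_r$, together with $[f_i,f_j]\cdot\nabla I_r = ([f_i,f_j])_r$, convert the scalar identity for $u=I_r$ into the $r$-th component of the vector formula.

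I do not expect a genuine obstacle: the argument is essentially a bookkeeping exercise whose only nontrivial ingredient is (\ref{eqbasich}), which reduces the $m+1$ case to the $m$ case by peeling off the last vector field. The one point to be careful about is to keep track of the cross terms: the inductive hypothesis contributes brackets $[f_i,f_j]$ for $i<j\leq m$, while the application of (\ref{eqbasich}) to $(F,g)$ supplies exactly the missing brackets $[f_i, f_{m+1}]$ for $i\leq m$, with no double counting and no sign issue since the bracket is linear in each entry.
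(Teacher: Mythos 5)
Your proposal is correct and follows essentially the same route as the paper: both reduce to the known two-field identity~(\ref{eqbasich}), peel off the last field via~(\ref{eqpowersum}), expand the binomial square, and use bilinearity of the bracket to merge $[F,f_{m+1}]=\sum_{i\le m}[f_i,f_{m+1}]$ with the inductive sum. The paper only writes out the $m=3$ case and leaves the induction to the reader and likewise omits the componentwise derivation of the vector identity from the scalar one, both of which you carry out explicitly; the underlying argument is identical.
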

\begin{proof}
We only prove the second formula.
We already know that
$$(H_{f_1}\boxplus H_{f_2})^2u=H^{(2)}_{f_1+f_2}u+[f_1,f_2]\cdot \nabla u.$$
Now notice that by definition
$$\begin{array}{ll}
(H_{f_1}\boxplus H_{f_2}\boxplus H_{f_3})^2u=(H_{f_1}\boxplus H_{f_2})^2u+H^{(2)}_{f_3}u+2H_{f_1+f_2}\circ H_{f_3}u\pm H_{f_3}\circ H_{f_1+f_2}u\\
=H^{(2)}_{f_1+f_2}u+H_{f_1+f_2}\circ H_{f_3}u+H_{f_3}\circ H_{f_1+f_2}u+H^{(2)}_{f_3}u+[f_1,f_2]\circ\nabla u+[f_1+f_2,f_3]\cdot\nabla u\\
=H^{(2)}_{f_1+f_2+f_3}u+([f_1,f_2]+[f_1,f_3]+[f_2,f_3])\cdot\nabla u.
\end{array}$$
We complete similarly the statement by induction. We leave the easy details to the reader.
\end{proof}

We add some definitions to the vector case.
\begin{defin}
We say that a function $u:\R^n\to\R^h$ has a $k-th$ order rate of change in the direction of $v\in\R^h$ for the control system at $x_o\in\R^n$ if there are available vector fields $f_1,\dots f_m$ such that
\begin{equation}\label{eqrateu}
v=\frac1{k!}(H_{f_1}\boxplus\dots\boxplus H_{f_m})^ku(x_0)\neq0,\;(H_{f_1}\boxplus\dots\boxplus H_{f_m})^iu(x_0)=0\;\hbox{for all }i=1,\dots,k-1.
\end{equation}
Let $\mathcal L_u:=\{v:v\hbox{ satisfies }(\ref{eqrateu})\}\subset\R^h$.

In particular we say that a balanced trajectory $x_s$ of (\ref{eqsystem}) as in (\ref{eqtwoswitch}) moves at $k-$th order rate in the direction of $v\in\R^n$ for the control system at $x_o\in\R^n$ if the vector fields $f_1,\dots f_m$ are such that
\begin{equation}\label{eqratei}
v=\frac1{k!}(H_{f_1}\boxplus\dots\boxplus H_{f_m})^kI(x_0)\neq0,\;(H_{f_1}\boxplus\dots\boxplus H_{f_m})^iI(x_0)=0\;\hbox{for all }i=1,\dots,k-1.\end{equation}
Let $\mathcal L\equiv \mathcal L_I:=\{v:v\hbox{ satisfies }(\ref{eqratei})\}\subset\R^n$.
\end{defin}
\begin{rem}\label{remsymmetric} {(Brackets and iterated Lie brackets.)}
Similarly to what we observed in (\ref{eqbracket}), if $f,g,h$ and $-f,-g,-h$ are available vector fields, in particular in the case of a symmetric system, then with a little algebraic effort one sees that, as expected,
$$(H_f\boxplus H_g\boxplus H_{-f}\boxplus H_{-g})I \equiv0,\quad(H_f\boxplus H_g\boxplus H_{-f}\boxplus H_{-g})^2I =2{[f,g]},$$
thus $[f,g]\in\mathcal L$ if it is nonvanishing at $x_o$, while on the other hand
$$\begin{array}{c}
(H_f\boxplus H_g\boxplus H_{-f}\boxplus H_{-g}\boxplus H_h\boxplus H_g\boxplus H_f\boxplus H_{-g}\boxplus H_{-f}\boxplus H_{-h})I\equiv 0,\\
(H_f\boxplus H_g\boxplus H_{-f}\boxplus H_{-g}\boxplus H_h\boxplus H_g\boxplus H_f\boxplus H_{-g}\boxplus H_{-f}\boxplus H_{-h})^2I\equiv0,\\
(H_f\boxplus H_g\boxplus H_{-f}\boxplus H_{-g}\boxplus H_h\boxplus H_g\boxplus H_f\boxplus H_{-g}\boxplus H_{-f}\boxplus H_{-h})^3I=6{[[f,g],h]},
\end{array}$$
so that $[[f,g],h]\in\mathcal L$, if it is nonvanishing at $x_o$.
Therefore, when using the identity function in $\R^n$, and if the system is symmetric, continuing in this fashion one can check that the complete Lie algebra generated by the available vector fields is contained in $\mathcal L$.
\end{rem}

\begin{rem}\label{remaffine} (The $ad$ operator for vector fields.)
Using the argument of Remark \ref{remequilibrium} we can see the following. Assume that $f_o(x)+\gep f_1(x)$ are available vector fields for all $\gep\in[-1,1]$ and that $f_o(x_o)=0$. This happens for instance in the case of an affine system with drift $f_o$ and $x_o$ as an equilibrium point, in particular for a linear system. Let $f(x)=f_o(x)+\gep f_1(x)$ and $g(x)=f_o(x)-\gep f_1(x)$. Then observe that by Remark \ref{remequilibrium}
$$(H_f\boxplus H_g)^{k+1}I(x_o)=2\gep(-1)^k\hbox{ad}^k_{f_o}f_1(x_o)+o(\gep),
$$
as $\gep\to0$. This indicates that we can find iterated Lie brackets given by the $ad$ operator close to directions in $\mathcal L$. Indeed we find for instance
$$\begin{array}{cc}
(H_f\boxplus H_g\boxplus H_g\boxplus H_f)I(x_o)=2f_o(x_o)=0, (H_f\boxplus H_g\boxplus H_g\boxplus H_f)^2I(x_o)=0,\\ 
(H_f\boxplus H_g\boxplus H_g\boxplus H_f)^3I(x_o)=12\gep\hbox{ad}^2_{f_o}f_1(x_o)+4\gep^2\hbox{ad}^2_{f_1}f_o(x_o).
\end{array}$$
Therefore given $v$ a unit vector, if $v\cdot \hbox{ad}^2_{f_o}f_1(x_o)<0$ then for $\gep$ sufficiently close to 0 also $v\cdot (H_f\boxplus H_g\boxplus H_g\boxplus H_f)^3I(x_o)<0$ and $(1/k!)(H_f\boxplus H_g\boxplus H_g\boxplus H_f)^3I(x_o)\in\mathcal L$.
We can proceed further as
$$(H_f\boxplus H_g\boxplus H_g\boxplus H_f\boxplus H_g\boxplus H_f\boxplus H_f\boxplus H_g)^jI(x_o)=0
$$
for $j=1,2,3$ while $(H_f\boxplus H_g\boxplus H_g\boxplus H_f\boxplus H_g\boxplus H_f\boxplus H_f\boxplus H_g)^4I(x_o)=-192\gep \hbox{ad}^3_{f_o}f_1(x_o)+o(\gep)$ and so forth.
\end{rem}

\section{Small time local attainability of fat targets}

In this section we obtain a first attainability result for the control system (\ref{eqsystem}) with the general assumptions of the previous Section 2.1, in the case of targets locally described by an inequality. Namely in the neighborhood $B_{R}(x_o)$ of $x_o$ the target is described as 
$${\mathcal T}=\{x:u(x)\leq u(x_o)\},$$
where $u\in C^1(\R^n)$, $\nabla u(x_o)\neq0$.
This case is easier because $u$ is scalar as there is only one constraint to cope with and we do not really need the Lemma in the Appendix.
Later in the section we also consider some classes of nonsmooth targets.
\begin{thm}\label{teofat}
For some  integer $k>0$ let $u\in C^{k}(\R^n)$ and $x_o\in\mathcal T\backslash\hbox{int}\mathcal T$ be such that $\nabla u(x_o)\neq0$. Suppose that $u$ has $k-$th order decrease rate, i.e. there are available vector fields $f_i\in C^{k-1}(\R^n;\R^n)$, $i=1,\dots,m$ such that
\begin{equation}\label{eqassfat}
(H_{f_1}\boxplus\dots\boxplus H_{f_m})^ku(x_o)<0\hbox{ and }(H_{f_1}\boxplus\dots\boxplus H_{f_m})^ju(x_o)=0 \hbox{ for }j=1,\dots,k-1.
\end{equation}
Then there are $\delta,\delta'>0$ and a constant $K>0$ such that for any $x\in\R^n$, $|x-x_o|\leq \delta'$ we can find $t\in[0,\delta]$ such that if $x_s$ is the trajectory obtained by using the vector  fields $f_i$ on subsequent intervals of length $t$ as in (\ref{eqtwoswitch}) and starting out at $x$, then $x_{mt}\in B_R(x_o)$, $u(x_{mt})\leq u(x_o)$
and moreover
$$t\leq K|x-x_o|^{1/k}.$$

In particular the minimum time function $T$ to reach the target $\mathcal T$ satisfies 
\begin{equation}\label{eqmtfest}
T(x)\leq Km|x-x_o|^{1/k},\quad x\in B_{\delta'}(x_o)\end{equation}
and thus it is continuous at $x_o$ and the target is STLA at $x_o$.
\end{thm}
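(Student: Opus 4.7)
The plan is to apply Proposition \ref{proptaylor} to $u$ along the $(m-1)$-switch trajectory with common switch-time $t$ starting at a point $x$ near $x_o$, producing the scalar identity
\begin{equation*}
u(x_{mt}) - u(x_o) = [u(x)-u(x_o)] + \sum_{i=1}^{k} c_i(x)\, t^i + t^k \gamma(x,t),
\end{equation*}
where $c_i(x) := \tfrac{1}{i!}(H_{f_1}\boxplus\cdots\boxplus H_{f_m})^i u(x)$ and $\gamma$ is continuous with $\gamma(x_o,0)=0$ by Remark \ref{remneed}. The task reduces to choosing $t$ so that the right-hand side is nonpositive, then controlling $t$ by a power of $|x-x_o|$.

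Assumption (\ref{eqassfat}) gives $c_k(x_o)=-c<0$ and $c_1(x_o)=\cdots=c_{k-1}(x_o)=0$. The regularity $u\in C^k$ and $f_j\in C^{k-1}$ makes each $c_i$ with $i\leq k-1$ a $C^{k-i}$ function, hence locally Lipschitz, so $|c_i(x)|\leq L|x-x_o|$ near $x_o$; continuity of $c_k$ gives $c_k(x)\leq -c/2$ nearby, and $u\in C^1$ gives $|u(x)-u(x_o)|\leq C_0|x-x_o|$. Substituting the ansatz $t=K|x-x_o|^{1/k}$, every summand turns into a multiple of $t^k$: the Lipschitz error contributes at most $(C_0/K^k)\,t^k$, each $c_i(x)t^i$ with $i<k$ is bounded by $L K^{-(k-i)}|x-x_o|^{i/k}\,t^k$, which is $o(t^k)$ as $x\to x_o$, the leading term contributes at most $-\tfrac{1}{2}c\,t^k$, and $|t^k\gamma(x,t)|\leq\varepsilon t^k$ for $t$ and $|x-x_o|$ small. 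Picking $K$ large enough that $C_0/K^k<c/4$, and then $\delta',\delta$ small enough to absorb the remaining terms, gives $u(x_{mt})\leq u(x_o)-(c/4)t^k\leq u(x_o)$, hence $x_{mt}\in\mathcal{T}$. The containment $x_{mt}\in B_R(x_o)$ is automatic for $\delta,\delta'$ small by the uniform trajectory bound from Section 2.1. The claimed estimate $T(x)\leq mt\leq mK|x-x_o|^{1/k}$ is then immediate.

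The main technical point is simply the careful balancing of constants so that the dominant negative term $-c\,t^k/k!$ absorbs the Lipschitz error $u(x)-u(x_o)$, the intermediate-order contributions, and the Taylor remainder. This balancing works precisely because $u$ is scalar and the leading coefficient is strictly negative at $x_o$; that is why the fat-target case reduces to a direct one-dimensional implicit solve for $t$ in terms of $|x-x_o|$ and does not require the positive-basis Petrov-type lemma from the Appendix that will be needed for lower-dimensional targets.
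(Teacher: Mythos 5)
Your proof is correct and closely parallels the paper's, but with a genuinely different handling of the fact that the trajectory starts at $x$ rather than $x_o$. The paper expands $u(x_{mt}^o)$ along a reference trajectory started at $x_o$, where all intermediate-order Taylor coefficients vanish by hypothesis, so only the leading $-c_ot^k/k!$ survives; it then bounds the mismatch $\rho(x,t)=u(x_{mt}^o)-u(x_{mt})$ by a single Gronwall estimate, giving $|\rho(x,t)|\leq C|x-x_o|$ and the balancing $t^*\sim |x-x_o|^{1/k}$ falls out immediately. You instead expand $u(x_{mt})$ directly from $x$, which reintroduces the intermediate coefficients $c_i(x)$; to kill them you invoke the additional (correct) regularity observation that each $c_i$ with $i<k$ is at least $C^1$ under $u\in C^k$, $f_j\in C^{k-1}$, hence locally Lipschitz with $c_i(x_o)=0$, giving $|c_i(x)|\leq L|x-x_o|$. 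This step is genuinely needed: continuity alone would not suffice, since after substituting $t=K|x-x_o|^{1/k}$ one has $c_i(x)t^i/t^k = c_i(x)K^{i-k}|x-x_o|^{(i-k)/k}$ and the negative power of $|x-x_o|$ is only tamed once one knows $c_i(x)=O(|x-x_o|)$. Both approaches work under the stated hypotheses; the paper's reference-trajectory-plus-Gronwall route avoids the coefficient bookkeeping and is what gets recycled for the manifold case (Lemma \ref{lemestimate}, Theorem \ref{teomanifold}), while yours is a more self-contained single-expansion argument that illustrates why the scalar fat-target case is the simplest of the three.
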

\begin{proof} The proof is local, in the neighborhood of the point $x_o$.
By the assumptions, we can choose $c_o>0$ such that $A:=(H_{f_1}\boxplus\dots\boxplus H_{f_m})^ku(x_o)\leq -2c_o<0$. We pick a radius $R>0$ so that $\nabla u(x)\neq0$ in $B_R(x_o)$ and $\mathcal T\cap B_R(x_o)=\{x\in B_R(x_o):u(x)\leq u(x_o)\}$. We consider $\sigma>0$ so that for any $|x-x_o|\leq R/2$, $0\leq t\leq\sigma$, any trajectory $\{x_s:s\geq0\}$ of the control system using only the vector fields $f_i$ and starting at $x$ satisfies $|x_s-x_o|\leq R$, for $s\leq mt$.

For any $x,t$ such that $|x-x_o|\leq R/2$, $0\leq t\leq\sigma$, and $u(x)> u(x_o)$, we construct the trajectory $x_s$ using the vector fields $f_1,\dots,f_m$ in subsequent  intervals of length $t$ and starting out at $x$, and the corresponding reference trajectory $x_s^o$ starting at $x_o$ and using the same control. Notice that changing $t$ modifies the trajectory drastically.
We define the continuous function (see Remark \ref{remneed})
$$\rho(x,t)=u(x^o_{mt})-u(x_{mt})$$
and we observe that by local Lipschitz continuity of $u$ and Gronwall estimates on the trajectories, there are constants $C,L>0$ such that 
\begin{equation}\label{eqrhoest}
|\rho(x,t)|\leq \hat C|x_{mt}-x^o_{mt}|\leq C|x-x_o|e^{Lm\sigma},
\end{equation}
 for all $|x-x_o|\leq R/2$ and $t\in[0,\sigma]$. In particular
\begin{equation}\label{eqlemfunct1}
\lim_{t\to0}\rho(x,t)=u(x^o)-u(x)<0,\quad
\lim_{x\to x_o}\sup_{t\in [0,\sigma]}|\rho(x,t)|=0.
\end{equation}
By the asymptotic formula of the trajectory $x^o_s$ proven in Proposition \ref{proptaylor} and the assumptions, 
we have that
$$u(x^o_{mt})-u(x_o)=\sum_{j=1}^{k}\frac{t^j}{j!}(H_{f_1}\boxplus\dots\boxplus H_{f_m})^ju(x_o)+\frac{t^k}{k!}\gamma(t)=\frac{t^k}{k!}\left((H_{f_1}\boxplus\dots\boxplus H_{f_m})^ku(x_o)+\gamma(t)\right),$$
where $\lim_{t\to0}\gamma(t)=0$. 
Therefore we can find $0<\delta(\leq\sigma)$ independent of $x$, such that 
$|\gamma(t)|\leq c_o,\hbox{ for all } t\in[0,\delta]$.
For a given $x$, $|x-x_o|\leq R/2$, we thus prove that by (\ref{eqrhoest})
\begin{equation}\label{eqbb}
\begin{array}{ll}
u(x_{mt})-u(x_o)&=u(x_{mt})-u(x^o_{mt})+\frac{t^k}{k!}\left((H_{f_1}\boxplus\dots\boxplus H_{f_m})^ku(x_o)+\gamma(t)\right)
\\&
\leq \frac{t^k}{k!}\left(A+\gamma(t)\right)-\rho(x,t)\leq
-c_ot^k/k!+C|x-x_o|e^{Lm\sigma}.
\end{array}\end{equation}
We now notice that the right hand side in (\ref{eqbb}) is zero for
$$t=t^*=\left(\frac{Ck!}{c_o}e^{Lm\sigma}\right)^{1/k}|x-x_o|^{1/k}$$
and that this choice is admissible since $t^*\leq \delta$ provided we choose 
$$|x-x_o|\leq \delta'=\frac{c_o}{Ck!}e^{-Lm\sigma}\delta^k.$$

Thus the target is reached at most at time $mt^*$ and the estimate on the minimum time function holds.
\end{proof}
If the decrease rate sufficient condition can be satisfied in the viscosity solutions sense, then we can drop regularity of the target.
\begin{cor}\label{corfat}
Let $u\in C(\R^n)$ and $x_o\in\mathcal T\backslash\hbox{int}\mathcal T$. Suppose that there is $\Phi\in C^k(\R^n)$ such that $u-\Phi$ attains a local maximum point at $x_o$, $\nabla \Phi(x_o)\neq0$ and $\Phi$ has $k-$th order decrease rate at $x_o$. Then all conclusions of Theorem \ref{teofat} hold true.
\end{cor}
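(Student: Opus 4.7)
The plan is to reduce the corollary to Theorem \ref{teofat} applied to the auxiliary smooth target $\tilde{\mathcal T}:=\{x:\Phi(x)\leq\Phi(x_o)\}$, and then to upgrade reachability of $\tilde{\mathcal T}$ to reachability of the original target $\mathcal T$ by means of the local maximum property. The key observation is that since $u-\Phi$ attains a local maximum at $x_o$, there is a radius $R'>0$ such that
\[u(x)-u(x_o)\leq \Phi(x)-\Phi(x_o),\quad x\in B_{R'}(x_o).\]
Thus, whenever a point $y$ lies in $B_{R'}(x_o)$ and satisfies $\Phi(y)\leq \Phi(x_o)$, it automatically satisfies $u(y)\leq u(x_o)$ and hence belongs to $\mathcal T$.

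Next I would apply the argument of Theorem \ref{teofat} with $\Phi$ playing the role of $u$. Since $\Phi\in C^k(\R^n)$, $\nabla\Phi(x_o)\neq 0$ and $\Phi$ has $k$-th order decrease rate at $x_o$, all the hypotheses of Theorem \ref{teofat} are met by the fictitious target $\tilde{\mathcal T}$. In the course of that proof one first selects a radius $R>0$ ensuring suitable regularity (here: that $\nabla\Phi$ does not vanish on $B_R(x_o)$ and that any balanced trajectory using the fields $f_1,\dots,f_m$ starting in $B_{R/2}(x_o)$ stays in $B_R(x_o)$ on $[0,m\sigma]$). I would shrink $R$ further if necessary so that $R\leq R'$; this is harmless because the construction is local. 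The theorem then produces constants $\delta,\delta',K>0$ and, for every $x\in B_{\delta'}(x_o)$, a time $t\in[0,\delta]$ with $t\leq K|x-x_o|^{1/k}$ such that the balanced trajectory $x_s$ starting at $x$ and switching between $f_1,\dots,f_m$ on intervals of length $t$ satisfies $x_{mt}\in B_R(x_o)$ and $\Phi(x_{mt})\leq \Phi(x_o)$.

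Finally, combining the two ingredients, $x_{mt}\in B_R(x_o)\subseteq B_{R'}(x_o)$ and the local maximum inequality above give
\[u(x_{mt})-u(x_o)\leq \Phi(x_{mt})-\Phi(x_o)\leq 0,\]
hence $x_{mt}\in\mathcal T$. This is precisely the conclusion of Theorem \ref{teofat} for the original target, and the estimate $T(x)\leq Km|x-x_o|^{1/k}$ follows verbatim. There is no real obstacle here beyond the bookkeeping of the neighborhoods: the substance of the argument is the standard viscosity-style observation that the sufficient condition for attainability depends only on the value of $u$ at $x_o$ and on a smooth subtangent at $x_o$, so one may replace $u$ by any such subtangent without altering the conclusion.
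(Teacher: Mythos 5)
Your proposal is correct and follows essentially the same route as the paper's proof: introduce the auxiliary smooth target $\{\Phi\leq\Phi(x_o)\}$, observe via the local maximum property of $u-\Phi$ that this set is contained in $\mathcal T$ near $x_o$, and then apply Theorem \ref{teofat} to $\Phi$. The extra bookkeeping you supply about shrinking radii is implicit in the paper's shorter argument but does not change the substance.
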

\begin{proof}
If there is a neighborhood of $x_o$ such that $u(x)-\Phi(x)\leq u(x_o)-\Phi(x_o)$ for $x\in B_R(x_o)$, then
$$\hat{\mathcal T}=\{x\in B_R(x_o):\Phi(x)\leq\Phi(x_o)\}\subset \{x\in B_R(x_o):u(x)\leq u(x_o)\}.$$
We thus apply the assumption on $\Phi$ and determine that $\hat{\mathcal T}$, and therefore $\mathcal T$, is STLA at $x_o$.
\end{proof}
\begin{rem}
In our previous papers \cite{so1,so2} we proved that if $k=2$ and the system is symmetric or affine, then the decrease rate condition of the previous Theorem with $m\leq2$, e.g. a trajectory with at most one switch, is necessary and sufficient for the target being STLA at $x_o$ and holding (\ref{eqestt}), the necessary part being proven in \cite{bfs}.
\end{rem}
The next statement shows how to improve in a simple way estimate (\ref{eqmtfest}) and obtain H\"older regularity of the minimum time function. We need some uniformity of the pointwise estimate satisfied by $T$. Below we indicate the distance function from the target as
$d(x,\mathcal T):=\min\{|x-y|:y\in\mathcal T\}.$ Of course the next statement can be generalized as in Corollary \ref{corfat}.
\begin{prop}
We consider the control system (\ref{eqsystem}) and the target $\mathcal T$ at the beginning of the section with $u\in C^2(\R^n)$. Assume that 
there are $R,C>0$ and an integer $k$ such that for all $x\in \mathcal T\cap B_R(x_o)$ the minimum time function satisfies 
$$T(y)\leq C|y-x|^{1/k}\quad\hbox{for all }y\in B_{R/2}(x).$$
Then $T$ also satisfies
$$T(x)\leq d(x,\mathcal T)^{1/k},\quad\hbox{for all }x\in B_{R/2}(x_o).$$
In particular if the assumption holds for all $x_o\in\mathcal T$, then $T$ is locally $1/k-$H\"older continuous in its domain if $F$ satisfies (\ref{eqlip}) globally.
\end{prop}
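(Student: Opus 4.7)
\medskip

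\noindent\textbf{Proof proposal.}
The plan is in two stages: first upgrade the pointwise estimate from $|y-x|$ (for $x$ on the target) to $d(y,\mathcal T)$ (for $y$ away from the target), and then obtain H\"older continuity by combining this upgraded estimate with dynamic programming and Gronwall.

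For the first stage, fix $x\in B_{R/2}(x_o)\setminus\mathcal T$ (if $x\in\mathcal T$ there is nothing to prove, since $T(x)=0$). Because $\mathcal T$ is closed, there exists a closest point $\bar x\in\mathcal T$ with $|x-\bar x|=d(x,\mathcal T)$. Since $x_o\in\mathcal T$, we have the trivial bound $d(x,\mathcal T)\leq|x-x_o|\leq R/2$, which gives both $|x-\bar x|\leq R/2$ and $|\bar x-x_o|\leq|x-\bar x|+|x-x_o|\leq 2|x-x_o|\leq R$, so $\bar x\in\mathcal T\cap B_R(x_o)$. The hypothesis then applies at $\bar x$, with $y=x\in B_{R/2}(\bar x)$, yielding
\begin{equation*}
T(x)\;\leq\;C|x-\bar x|^{1/k}\;=\;C\,d(x,\mathcal T)^{1/k},
\end{equation*}
which is the asserted inequality (up to the constant $C$, which is missing in the statement).

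For the second stage, assume the hypothesis holds uniformly around every $x_o\in\mathcal T$, with (after a standard covering/compactness argument on bounded subsets of the reachable set $\mathcal R$) a single constant $C$ and radius $R$. Fix a bounded subdomain $K\subset\mathcal R$ where $T$ is bounded by some $T_{\max}$, and take $x,y\in K$ with $T(x)\geq T(y)$ without loss of generality. For $\varepsilon>0$ pick a control $a_\cdot$ such that the trajectory $y_s$ from $y$ reaches $\mathcal T$ at time $\tau\leq T(y)+\varepsilon$. Apply the same control starting at $x$ to get a trajectory $x_s$; by the global Lipschitz assumption (\ref{eqlip}) and Gronwall,
\begin{equation*}
|x_\tau - y_\tau|\;\leq\; e^{L\tau}|x-y|\;\leq\; e^{L(T_{\max}+1)}|x-y|.
\end{equation*}
Since $y_\tau\in\mathcal T$, this gives $d(x_\tau,\mathcal T)\leq e^{L(T_{\max}+1)}|x-y|$, and by dynamic programming together with the first stage,
\begin{equation*}
T(x)\;\leq\;\tau+T(x_\tau)\;\leq\;T(y)+\varepsilon+C\,d(x_\tau,\mathcal T)^{1/k}\;\leq\;T(y)+\varepsilon+C'|x-y|^{1/k},
\end{equation*}
with $C'=C\,e^{L(T_{\max}+1)/k}$. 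Letting $\varepsilon\to 0^+$ yields the local $1/k$-H\"older estimate $|T(x)-T(y)|\leq C'|x-y|^{1/k}$.

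The main (small) obstacle is just making sure the bound $d(x_\tau,\mathcal T)\leq e^{L\tau}|x-y|$ lands inside the neighborhood where the pointwise estimate from the first stage is available: this requires $x_\tau$ to be close to $\mathcal T$, which forces the argument to be local and to rely on the uniform version of the assumption. Beyond that, the argument is essentially a textbook combination of the closest-point projection, dynamic programming, and Gronwall, with no genuinely delicate step.
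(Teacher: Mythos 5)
Your proof is correct and follows essentially the same route as the paper: the first stage (projecting onto a nearest point $\bar x\in\mathcal T$, checking $\bar x\in B_R(x_o)$ and $|x-\bar x|\le R/2$ via the triangle inequality, then applying the pointwise hypothesis at $\bar x$) is exactly the paper's argument, and the second stage is the standard dynamic-programming-plus-Gronwall argument that the paper delegates to \cite{bcd} and \cite{bfs}. You also correctly flagged the missing constant $C$ in the stated conclusion, which is a typo in the paper.
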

\begin{proof}
Let $x\in B_{R/2}(x_o)$, then by regularity of $u$ there is a unique projection $p_x\in\mathcal T$, $|x-p_x|=d(x,\mathcal T)$ and $p_x\in  B_R(x_o)$ since $|p_x-x_o|\leq|p_x-x|+|x-x_o|\leq2|x-x_o|$. Then we can apply the assumption at $p_x$ and find
$$T(x)\leq C|x-p_x|^{1/k}=Cd(x,\mathcal T)^{1/k}.$$
The last part of the statement comes from a standard argument as in the book \cite{bcd} or in \cite{bfs}.
\end{proof}
The next consequence deals with another class of nonsmooth sets.
\begin{cor}
For some  integer $k>0$ let $u_i\in C^{k}(\R^n)$, $i=1,\dots,l$, be such that $\nabla u_i(x_o)\neq0$, and $\mathcal T\cap B_R(x_o)=\{x\in B_R(x_o):u_i(x)\leq u_i(x_o),\;i=1,\dots,l\}$. Suppose that $u_i$ has $k_i-$th order decrease rate for all $i$, i.e. there are available vector fields $f_h\in C^{k-1}(\R^n;\R^n)$, $h=1,\dots,m$ such that
\begin{equation}\label{eqassman}
(H_{f_1}\boxplus\dots\boxplus H_{f_m})^{k_i}u_i(x_o)<0\hbox{ and }(H_{f_1}\boxplus\dots\boxplus H_{f_m})^ju_i(x_o)=0 \hbox{ for }j=1,\dots,k_i-1.
\end{equation}
Then there are $\delta,\delta'>0$ and a constant $K>0$ such that for any $x\in\R^n$, $|x-x_0|\leq \delta'$ we can find $t\in[0,\delta]$ such that if $x_s$ is the trajectory obtained by using the vector  fields $f_i$ as in (\ref{eqtwoswitch}) and starting out at $x$, then $u_i(x_{mt})\leq u_i(x_o)$ for all $i=1,\dots,l$ and moreover
$t\leq K|x-x_o|^{1/k}$ if $k=\max_ik_i$.
All other conclusions of Theorem \ref{teofat} remain unchanged.
\end{cor}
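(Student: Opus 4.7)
The strategy is to rerun the argument of Theorem \ref{teofat} constraint by constraint, coordinating a single switching time $t$ that simultaneously works for every $u_i$. Since the family $f_1,\dots,f_m$ and the associated balanced $(m-1)$-switch trajectory $x_s$ are common to all constraints, for each $i$ I apply Proposition \ref{proptaylor} to $u_i$ along the reference trajectory $x^o_s$ starting at $x_o$ under the same control to obtain
$$u_i(x^o_{mt}) - u_i(x_o) = \frac{t^{k_i}}{k_i!}\bigl(A_i + \gamma_i(t)\bigr), \qquad A_i := (H_{f_1}\boxplus\dots\boxplus H_{f_m})^{k_i}u_i(x_o) \leq -2c_i < 0,$$
with $\gamma_i(t)\to 0$ as $t\to 0^+$. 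Picking $\delta_i>0$ so that $|\gamma_i(t)|\leq c_i$ for $t\in[0,\delta_i]$, and following verbatim the Gronwall/Lipschitz estimate (\ref{eqrhoest})--(\ref{eqbb}) for each $u_i$, I obtain constants $C_i,L,\sigma>0$ such that for $|x-x_o|\leq R/2$ and $t\in[0,\delta_i]$,
$$u_i(x_{mt}) - u_i(x_o) \leq -\frac{c_i}{k_i!}\,t^{k_i} + C_i\, e^{Lm\sigma}\, |x-x_o|.$$

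Next I set $\delta := \min_{i} \delta_i$ and choose the common switching time $t^* = K|x-x_o|^{1/k}$, where $k=\max_i k_i$ and $K\geq 1$ is large enough to satisfy
$$K^{k_i} \geq \frac{C_i\,k_i!}{c_i}\,e^{Lm\sigma} \qquad \text{for every } i=1,\dots,l.$$
After restricting attention to $|x-x_o|\leq 1$, the elementary inequality $|x-x_o|^{k_i/k}\geq |x-x_o|$ (valid because $k_i\leq k$) yields $(t^*)^{k_i} = K^{k_i}|x-x_o|^{k_i/k} \geq (C_i k_i!/c_i)\,e^{Lm\sigma}\,|x-x_o|$, which plugged into the previous bound gives $u_i(x_{mt^*})\leq u_i(x_o)$ for every $i$ simultaneously. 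Finally, I pick $\delta'>0$ small enough that $|x-x_o|\leq\delta'$ forces both $|x-x_o|\leq\min(1,R/2)$ and $t^*\leq\delta$; this is achieved by any $\delta' \leq \min\{1,R/2,(\delta/K)^k\}$. The bound on $T$ and the STLA conclusion then follow exactly as in Theorem \ref{teofat}.

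The main obstacle is bookkeeping the interplay between the different orders $k_i$: since a single switching time must work for every constraint, one is forced to use the worst common exponent $k=\max_i k_i$ and to localize to a small enough ball so that the comparison $|x-x_o|^{k_i/k}\geq |x-x_o|$ points in the right direction. Once this is recognized, the rest is a scalar-by-scalar application of the Taylor estimate already established for fat targets, with the constants adjusted uniformly in $i$.
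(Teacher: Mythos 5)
Your proposal is correct and follows essentially the same route as the paper: run the scalar argument of Theorem \ref{teofat} constraint by constraint with a common family of vector fields, then exploit the elementary fact that after localizing to a unit ball the worst exponent $k=\max_i k_i$ governs the common switching time $t^*\sim|x-x_o|^{1/k}$. The paper phrases this by uniformizing to a single constant $c_o$ and bundling the comparison into $t^{k_i}/k_i!\geq t^k/k!$ for $t\leq1$, whereas you keep per-constraint constants $c_i,C_i$ and use $|x-x_o|^{k_i/k}\geq|x-x_o|$ directly — the same observation, organized slightly differently.
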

\begin{proof} We modify the proof of Theorem \ref{teofat}.
By the assumptions, we can choose $c_o>0$ such that $A_i:=(H_{f_1}\boxplus\dots\boxplus H_{f_m})^ku_i(x_o)\leq -2c_o<0$, for all $i=1,\dots,l$. Following the proof of Theorem \ref{teofat}, we reach (\ref{eqbb}) for each constraint, and if $\delta\leq1$,
$$u_i(x_{mt})-u_i(x_o)\leq -c_o\frac{t^{k_i}}{k_i!}
-\rho_i(x,t)\leq -c_o\frac{t^k}{k!}
-\rho(x,t),$$
where 
$=1,\dots,l$, $\rho(x,t)=\min_i\rho_i(x,t)$. We conclude as in Theorem \ref{teofat}.
\end{proof}

\section{Small time local attainability of manifolds}

\subsection{The case of a point}

In this section the target for system (\ref{eqsystem}) is a point $\mathcal T=\{x_o\}\subset\R^n$, which is identified by the system $u(x)=x-x_o=0$.
This case simplifies with respect to the general one because the target is determined by {\it flat} constraints. The constants $R,\sigma$ below will follow Section 2.1.

We start with a useful Lemma comparing a trajectory at a point and the translation of the trajectory starting at a different point but following the same control.
\begin{lem}\label{lema1}
Let $(x^o_s)_{s\in[0,t]}$ be a solution of (\ref{eqsystem}) starting at $x_o$ and $(a_s)_{s\in[0,t]}$ be the corresponding control. Let $x\neq x_o$ and $y_s=x^o_s+x-x_o$, $s\in[0,t]$ be the translation of $x^o_s$ starting at $x$. Then if $t,|x-x_o|$ are sufficiently small, the trajectory $(x_s)_{s\in[0,t]}$ of (\ref{eqsystem}) starting at $x$ with control $a_s$ satisfies
$$| x_t-y_t|\leq Le^{Lt}|x-x_o|t.$$
\end{lem}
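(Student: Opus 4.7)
The plan is to apply a Gronwall-type estimate to the difference $w_s := x_s - y_s$, exploiting the local Lipschitz condition (\ref{eqlip}) on $F$ in the variable $x$. First I would note that, since $y_s = x^o_s + (x-x_o)$ is a mere translation of the reference trajectory, we have $\dot y_s = \dot x^o_s = F(x^o_s,a_s) = F(y_s - (x-x_o),a_s)$, while $\dot x_s = F(x_s,a_s)$. Both $x_s$ and $y_s$ start at $x$, so $w_0 = 0$.

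Next, I would argue that for $t$ and $|x-x_o|$ sufficiently small, both the reference trajectory $x^o_s$ and the perturbed trajectory $x_s$ remain inside $B_R(x_o)$ for every $s\in[0,t]$. For $x^o_s$ this is the standing assumption from Section 2.1 (choose $|x_o - x_o|\leq R/2$, $t\leq\sigma$ trivially), and for $x_s$ it follows from $|x_s - x_o|\leq |x-x_o| + Ms$, which is less than $R$ once $|x-x_o|\leq R/2$ and $t\leq\sigma = R/(2M)$. Within this ball the Lipschitz estimate (\ref{eqlip}) applies with constant $L$, and since $y_s - (x-x_o) = x^o_s\in B_R(x_o)$, I can write
\begin{equation*}
|\dot w_s| = |F(x_s,a_s) - F(y_s - (x-x_o),a_s)| \leq L\,|x_s - y_s + (x-x_o)| \leq L\bigl(|w_s| + |x-x_o|\bigr).
\end{equation*}

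Integrating from $0$ to $s$ and using $w_0 = 0$ gives
\begin{equation*}
|w_s| \leq L|x-x_o|\,s + L\int_0^s |w_r|\,dr,
\end{equation*}
and the classical Gronwall inequality (with nondecreasing forcing term $L|x-x_o|s$) yields $|w_s| \leq L|x-x_o|\,s\,e^{Ls}$. Setting $s = t$ delivers the stated bound $|x_t - y_t|\leq L e^{Lt}|x-x_o|t$.

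This is a routine computation rather than a hard argument; the only subtlety is the bookkeeping on the domain of validity of (\ref{eqlip}), namely ensuring that \emph{both} trajectories stay in $B_R(x_o)$ so that the Lipschitz constant $L$ can be used uniformly, which is precisely what the hypothesis ``$t,|x-x_o|$ sufficiently small'' guarantees.
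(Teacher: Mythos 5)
Your proof is correct and follows essentially the same route as the paper: you rewrite $y_s$ as a solution of the translated system, apply the local Lipschitz bound to obtain $|x_s-y_s|\leq L|x-x_o|s+L\int_0^s|x_r-y_r|\,dr$, and conclude via Gronwall with nondecreasing forcing term. The only addition is the explicit bookkeeping that both trajectories remain in $B_R(x_o)$, which the paper leaves implicit.
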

\begin{proof}
The translated trajectory $y_s$ is itself a trajectory of a translated control system
$$\dot y_s=F(y_s+x_o-x,a_s),\quad y_0=x.$$
Therefore
$$| x_s-y_s|\leq \int_0^s|F( x_r,a_r)-F(y_r+x_o-x,a_r)|\;dr\leq L|x-x_o|s+L\int_0^s| x_r-y_r|\;dr
$$
for all $s\in[0,t]$. By usual Gronwall estimates we then get
$$| x_t-y_t|\leq Le^{Lt}|x-x_o|t.$$
\end{proof}

In this section we are going to assume the following condition at $x_o$:
\begin{itemize}
\item[(A1)]{We have $m$ groups of available vector fields of the control system: $f^{(i)}_1,\dots,f^{(i)}_{j(i)}$, $i=1,\dots,m$ and integers $k_i>0$ such that $f^{(i)}_r\in C^{k_i-1}(\R^n;\R^n)$ and we denote $\bar k=\max_ik_i$. We suppose for convenience that $j(i)=1$ if $k_i=1$. We assume that
$$(H_{f^{(i)}_1}\boxplus\dots\boxplus H_{f^{(i)}_{j(i)}})^rI(x_o)=0,\quad 1\leq r<k_i,\;i=1,\dots,m.
$$
and 
$$\R^n\ni A^o_i:=\frac1{k_i!}(H_{f^{(i)}_1}\boxplus\dots\boxplus H_{f^{(i)}_{j(i)}})^{k_i}I(x_o)\neq0.
$$}
\end{itemize}
In particular the $i-$th group has $j(i)$ vector fields and the trajectory moves at $k_i-$th order rate at $x_o$ in the direction of $A^o_i\in\mathcal L$.
We construct the $n\times m$ matrix, written in columns as
$A_o=\left(A^o_i
\right)_{i=1,\dots,m}$,
so that $A_o$ has a column for each group of vector fields. 
The main assumption on matrix $A_o$ is the following.
\begin{itemize}
\item[(A2)]{As a $n\times m$ matrix, the $m$ columns of the matrix $A_o$ are a {\it positive basis} of $\R^n$. In particular $A_o$ has rank $n$ (and $m\geq n+1$).}
\end{itemize}
\begin{rem} The notion of positive basis of a vector space is classical and recalled in the Appendix together with a technical lemma that we need in the main results of this section, where we use the assumption (A2). If $k_i=1$ for each $i$, then in (A1) we have $m$ vector fields satisfying $A^o_i=f_i(x_o)\neq0$. In this case assumption (A2) is the positive basis considion of the classical work by Petrov \cite{pe,pe2}. If $\bar k=2$, then (A2) has been used by Liverovskii \cite{li,li2} as necessary and sufficient second order condition when the columns of $A_o$ are either available vector fields or their first Lie brackets.
\end{rem}
Below for any vector $\tau\in\R^m$ we indicate $\tau\geq0$ if $\tau=(\tau_1,\dots,\tau_m)$ and $\tau_i\geq0$ for $i=1,\dots,m$. Let $\tau=(\tau_1,\dots,\tau_m)\geq0$.
We are going to define several reference trajectories. For $i=1,\dots,m$ and $\tau_i\geq0$, trajectory $x^{o,i}_t$ starts at $x_o$ and it is a balanced trajectory of the $j(i)$ fields $f^{(i)}_1,\dots,f^{(i)}_{j(i)}$ followed in consecutive time intervals of length $\tau_i^{1/k_i}$ each, and therefore by Proposition \ref{proptaylorvector} we know that at the end point
\begin{equation}\label{eqfirstestimate}
x^{o,i}_{j(i)\tau_i^{1/k_i}}=x_o+A^o_i\tau_i+\tau_io(1),
\end{equation}
as $\tau_i\to0$.
For the given family of nonnegative times in the vector $\tau\geq0$ we build recursively a further reference trajectory as follows. We put $T_0=0$ for convenience.
For the first group: we consider the trajectory 
$x^o_s\equiv x^{o,1}_s$, $s\in[0,T_1]$, $T_1=j(1)\tau_1^{1/k_1}$.
We proceed recursively with the following groups of vector fields. If we have defined $x^o_s$ up to the $i-$th group and time $T_i=\sum_{l=1}^i j(l)\tau_l^{1/k_l}$,
we proceed with the $(i+1)$-th group of vector fields starting at $x^o_{T_i}$ and prolonging the trajectory by following the vector fields $f^{(i+1)}_1,\dots,f^{(i+1)}_{j(i+1)}$ in $j(i+1)$ successive intervals of respective length $\tau_{i+1}^{1/k_{i+1}}$ until we reach the point $x^o_{T_{i+1}}$, $T_{i+1}=T_i+j({i+1})\tau_{i+1}^{1/k_{i+1}}$.
Recursively, for all $\tau\geq0$ we have a trajectory $x^o_s$ well defined for $s\in[0,T_m]$.

For a generic initial point $x\in B_{R/2}(x_o)$, and $s\in[0,T_m]$, we also consider the corresponding  trajectory $x_s$ starting at $x$ with the same control as $x^o_s$. We finally construct the translated trajectories $y^{i+1}_s=x^{o,i+1}_s+x_{T_{i}}^o-x_o$ for $s\in[0,j(i+1)\tau_{i+1}^{1/k_{i+1}}]$. Notice that $y^{i+1}_0=x_{T_{i}}^o$. We have now defined $x^{o,i}_s,y^i_s$, $s\in[0,T_i-T_{i-1}]$, $i=1,\dots,m$, and $x^o_s,x_s$, $s\in[0,T_m]$.

We start with the following Lemma that we will also use in the following section.
\begin{lem}\label{lemestimate}
Consider a nonlinear control system in the form (\ref{eqsystem}) and suppose that (A1) is satisfied at the point $x_o\in\mathcal T$. Then with the notations above
\begin{equation}\label{eqindu}
x_{T_i}^o=x_o+\sum_{j=1}^iA^o_j\tau_j+(\tau_1+\dots+\tau_i)o(1),
\end{equation}
for all $i=1,\dots,m$, as $\tau\to0$. In particular there is $C>0$ such that
$$|x_{T_i}^o-x_o|\leq C (\tau_1+\dots+\tau_i),$$
for $i=1,\dots,m$ and all $\tau$ sufficiently small.
\end{lem}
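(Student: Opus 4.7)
The plan is to prove the statement by induction on $i$, using Lemma \ref{lema1} to transfer the asymptotic formula (\ref{eqfirstestimate}), which is an expansion around $x_o$, into an expansion of the piece of $x^o_s$ that starts at $x^o_{T_i} \neq x_o$.

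For the base case $i=1$, the trajectory $x^o_s$ on $[0, T_1]$ coincides with $x^{o,1}_s$ by construction, so (\ref{eqindu}) is exactly (\ref{eqfirstestimate}) with $i=1$. For the inductive step, assume (\ref{eqindu}) holds for $i$. Applying Proposition \ref{proptaylorvector} to the balanced trajectory $x^{o,i+1}_s$ starting at $x_o$, at its endpoint we get
\begin{equation*}
x^{o,i+1}_{j(i+1)\tau_{i+1}^{1/k_{i+1}}} = x_o + A^o_{i+1}\tau_{i+1} + \tau_{i+1}o(1),
\end{equation*}
so by definition of the translated trajectory,
\begin{equation*}
y^{i+1}_{j(i+1)\tau_{i+1}^{1/k_{i+1}}} = x^o_{T_i} + A^o_{i+1}\tau_{i+1} + \tau_{i+1}o(1).
\end{equation*}
Now $x^o_s$ restricted to $[T_i, T_{i+1}]$ is the actual trajectory of the control system starting at $x^o_{T_i}$ under the same control as $x^{o,i+1}$, while $y^{i+1}$ is the rigid translate of $x^{o,i+1}$ starting at $x^o_{T_i}$. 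Apply Lemma \ref{lema1} with ``$x_o$'' $= x_o$, ``$x$'' $= x^o_{T_i}$, and time horizon $j(i+1)\tau_{i+1}^{1/k_{i+1}} \le T_m$. This gives
\begin{equation*}
\bigl| x^o_{T_{i+1}} - y^{i+1}_{j(i+1)\tau_{i+1}^{1/k_{i+1}}} \bigr| \le L e^{LT_m}\, |x^o_{T_i}-x_o|\, j(i+1)\tau_{i+1}^{1/k_{i+1}}.
\end{equation*}

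By the inductive bound, $|x^o_{T_i}-x_o| \le C(\tau_1+\dots+\tau_i)$, and $\tau_{i+1}^{1/k_{i+1}} \to 0$ as $\tau \to 0$, so the right-hand side is $(\tau_1+\dots+\tau_{i+1})\,o(1)$. Combining with the asymptotic expression for $y^{i+1}_{\text{end}}$ and with the inductive hypothesis for $x^o_{T_i}$, we obtain
\begin{equation*}
x^o_{T_{i+1}} = x_o + \sum_{j=1}^{i+1} A^o_j \tau_j + (\tau_1+\dots+\tau_{i+1})o(1),
\end{equation*}
which is (\ref{eqindu}) at step $i+1$. The linear bound $|x^o_{T_{i+1}}-x_o| \le C(\tau_1+\dots+\tau_{i+1})$ is then immediate by taking $\tau$ small enough that the $o(1)$ term is bounded by $1$, and absorbing $\max_j|A^o_j|+1$ into $C$.

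The only slightly delicate point is the interplay between the orders $k_i$ and the definition of $T_{i+1}-T_i = j(i+1)\tau_{i+1}^{1/k_{i+1}}$: because Lemma \ref{lema1} produces a factor of the elapsed time, and this time is $\tau_{i+1}^{1/k_{i+1}}$ rather than $\tau_{i+1}$, the Gronwall error is small relative to $\tau_{i+1}$ even when $k_{i+1} > 1$, which is exactly what is needed to keep the error in the class $(\tau_1+\dots+\tau_{i+1})o(1)$. All constants depend on $L$, $T_m$, and the finitely many $j(i)$, but not on $x$ or on the individual components of $\tau$ once $\tau$ is taken sufficiently small.
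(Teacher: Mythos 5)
Your proof is correct and matches the paper's argument essentially step for step: the same base case from (\ref{eqfirstestimate}), the same decomposition of $x^o_{T_{i+1}}-x_o$ into the Gronwall-type error between $x^o$ restricted to $[T_i,T_{i+1}]$ and the translated reference trajectory $y^{i+1}$ (controlled by Lemma \ref{lema1}), plus the expansion of $x^{o,i+1}$ from Proposition \ref{proptaylorvector}, plus the inductive hypothesis. The observation you flag as delicate --- that the elapsed time $j(i+1)\tau_{i+1}^{1/k_{i+1}}$ tends to $0$ so the Gronwall error is $(\tau_1+\dots+\tau_i)o(1)$ --- is precisely the point the paper also uses to close the induction.
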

\begin{proof}
We prove the statement by an induction argument. For the first group of vector fields and (\ref{eqfirstestimate}) we have
$$x_{T_1}^o=x_{T_1}^{o,1}=x_o+A^o_1\tau_1+\tau_1o(1),
$$
as $\tau\to0$. 
Suppose now by induction that after the i-th group of vector fields, $1\leq i<m$, we have that (\ref{eqindu}) is satisfied. Observe that
$$\begin{array}{ll}
x_{T_{i+1}}^o-x_o&
=(x_{T_{i+1}}^o-y_{T_{i+1}-T_i}^{i+1})+(x_{T_{i+1}-T_i}^{o,i+1}-x_o)+(x_{T_{i}}^o-x_o)\\
&=(x_{T_{i+1}}^o-y_{T_{i+1}-T_i}^{i+1})+A^o_{i+1}\tau_{i+1}+\tau_{i+1}o(1)+\sum_{j=1}^iA^o_j\tau_j+(\tau_1+\dots+\tau_i)o(1)
\end{array}$$
where we rewrote the second term in the first line by (\ref{eqfirstestimate}) and the third by the induction assumption (\ref{eqindu}) at $i$. Notice that by Lemma \ref{lema1}, if $T_m\leq\sigma$, comparing the trajectories in $[T_i,T_{i+1}]$,
$$|x_{T_{i+1}}^o-y_{T_{i+1}-T_i}^{i+1}|\leq C|x_{T_{i}}^o-x_o|(T_{i+1}-T_i)\leq \hat C(\tau_1+\dots+\tau_{i})o(1),
$$
again by the induction assumption (\ref{eqindu}). This proves that (\ref{eqindu}) holds for $i+1$ and the conclusion is thus reached.
\end{proof}
We now prove the main result of this part of the paper.
\begin{thm}\label{teopoint} Consider a nonlinear control system in the form (\ref{eqsystem}) and suppose that (A1) and (A2) are satisfied at the point $x_o\in\R^n$. Then the target ${\mathcal T}=\{x_o\}$ is STLA and the minimum time function $T(x)$ at a point $x$ in the neighborhood of $x_o$ satisfies
$$T(x)\leq C|x-x_o|^{1/\bar k}.$$
In particular $T$ is locally $1/\bar k$-H\"older continuous in its domain, if $F$ satisfies (\ref{eqlip}) globally.
\end{thm}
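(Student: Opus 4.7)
The plan is to choose, for every $x$ in a small neighborhood of $x_o$, a nonnegative time vector $\tau=(\tau_1,\dots,\tau_m)$ such that the concatenated trajectory $x_s$ starting at $x$ (constructed just before Lemma \ref{lemestimate}) hits $x_o$ exactly at time $T_m(\tau)=\sum_{l=1}^m j(l)\tau_l^{1/k_l}$, and moreover $|\tau|:=\tau_1+\cdots+\tau_m\leq C|x-x_o|$. Since $\tau_l^{1/k_l}\leq \tau_l^{1/\bar k}\leq |\tau|^{1/\bar k}$ as soon as $\tau_l\leq 1$, this will give $T(x)\leq T_m(\tau)\leq C'|x-x_o|^{1/\bar k}$ at once.

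First I derive the finite-time expansion that drives everything. Apply Lemma \ref{lema1} to the piecewise-constant control produced by $\tau$ on the whole interval $[0,T_m]$: the translated reference trajectory $y_s:=x^o_s+(x-x_o)$ satisfies $|x_{T_m}-y_{T_m}|\leq Le^{LT_m}|x-x_o|T_m$, while Lemma \ref{lemestimate} yields $x^o_{T_m}-x_o=\sum_{j=1}^m A^o_j\tau_j+|\tau|\,o(1)$. Combining the two identities,
$$x_{T_m}-x_o=(x-x_o)+\sum_{j=1}^m A^o_j\tau_j+R(\tau,x),$$
where $R$ is continuous in $\tau$ (by Remark \ref{remneed}) and satisfies $|R(\tau,x)|\leq |\tau|\,\omega(|\tau|)+C|x-x_o|T_m(\tau)$ for some modulus $\omega(s)\to 0$ as $s\to 0^+$.

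Next, the controllability equation $x_{T_m}=x_o$ becomes
$$\sum_{j=1}^m A^o_j\tau_j=-(x-x_o)-R(\tau,x).$$
By assumption (A2) the columns $A^o_1,\dots,A^o_m$ of $A_o$ form a positive basis of $\R^n$, and I apply the revisited Petrov Lemma from the Appendix on the set $B_x:=\{\tau\geq 0:|\tau|\leq C_1|x-x_o|\}$ for a large constant $C_1$. On $B_x$, using $T_m(\tau)\leq (\sum_l j(l))|\tau|^{1/\bar k}\leq C''|x-x_o|^{1/\bar k}$, one gets
$$|R(\tau,x)|\leq C_1|x-x_o|\,\omega(C_1|x-x_o|)+C|x-x_o|^{1+1/\bar k}=o(|x-x_o|)\quad\text{as }x\to x_o,$$
so the perturbation is uniformly subordinate to the target value $-(x-x_o)$, and the Petrov Lemma furnishes a $\tau=\tau(x)\in B_x$ solving the equation.

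For this $\tau$, the pointwise bound $T(x)\leq T_m(\tau)\leq C|x-x_o|^{1/\bar k}$ is the desired STLA estimate at $x_o$; since $\mathcal T=\{x_o\}$ gives $d(x,\mathcal T)=|x-x_o|$, it is already of the form (\ref{eqestd}), and local $1/\bar k$-H\"older continuity of $T$ on its whole domain then follows by the standard dynamic programming argument, see \cite{bcd}. The main obstacle I foresee is the application of the Petrov Lemma: one must match its precise continuity and smallness hypotheses for the perturbation, which is why it is essential that $R(\cdot,x)$ is continuous in $\tau$ (via Remark \ref{remneed}) and genuinely $o(|x-x_o|)$ on $B_x$, and not merely $o(|\tau|)$; this in turn depends critically on the fractional scaling $\tau_l\mapsto\tau_l^{1/k_l}$ chosen in the definition of $T_m(\tau)$.
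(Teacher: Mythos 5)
Your overall strategy (build the concatenated trajectory, expand with Lemma \ref{lemestimate}, reduce to an equation for $\tau$, invoke the revisited Petrov Lemma, estimate $T_m$) is the same as the paper's, and the terminal estimate $T(x)\leq T_m(\tau)\leq C|x-x_o|^{1/\bar k}$ via $\tau_l^{1/k_l}\leq|\tau|^{1/\bar k}$ is handled correctly. But the central step — applying Lemma \ref{lempetrovter} — does not go through in the form you have set up. You write the controllability equation as $\sum_j A^o_j\tau_j=-(x-x_o)-R(\tau,x)$ with the $|\tau|\,o(1)$ remainder from Lemma \ref{lemestimate} lumped into $R$, hence into the right-hand side $\rho$. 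Lemma \ref{lempetrovter}(i) requires $\lim_{x\to x_o}\sup_{\tau\in B_\sigma(0),\,\tau\geq0}|\rho(x,\tau)|=0$ over a \emph{fixed} ball $B_\sigma(0)$; the piece $|\tau|\omega(|\tau|)$ in your $R$ is independent of $x$ and stays of size roughly $\sigma\omega(\sigma)$ as $x\to x_o$, so this hypothesis fails. Restricting to the shrinking ball $B_x=\{\tau\geq0:|\tau|\leq C_1|x-x_o|\}$ is not what the lemma supports: the lemma fixes $\delta$ a priori and then deduces the refined bound $|\tau|\leq K\sup_t|\rho(x,t)|$; it does not provide a solution whose existence is conditioned on $\tau$ staying in a ball that depends on $x$. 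As written, "apply the Petrov Lemma on $B_x$" is not licensed — you would need to re-derive the fixed-point argument from scratch on shrinking balls.

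The fix — and what the paper actually does — is to put the $|\tau|\,o(1)$ error into the \emph{matrix} perturbation rather than into $\rho$: write $|\tau|\,o(1)=\hat\gamma(\tau)\sum_i\tau_i=\gamma(\tau)\tau$, where $\gamma(\tau)$ is the $n\times m$ matrix with all columns equal to $\hat\gamma(\tau)\to0$ as $\tau\to0$. This satisfies the first hypothesis of Lemma \ref{lempetrovter}, and then only the genuinely $x$-dependent remainder goes into $\rho$. Note also that the paper takes $\rho(x,\tau):=x^o_{T_m}-x_{T_m}$ and bounds it uniformly in $\tau$ by $C|x-x_o|$ via Gronwall, bypassing Lemma \ref{lema1} here entirely; your comparison against the translate $y_s=x^o_s+(x-x_o)$ is a legitimate alternative (it even gives the sharper factor $T_m$), but it is unnecessary for this theorem and does not save you from having to split the remainder as above. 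Once you move the $|\tau|\,o(1)$ term into $\gamma(\tau)\tau$, Lemma \ref{lempetrovter}(i) applies directly and the rest of your argument, including the identification $d(x,\{x_o\})=|x-x_o|$ and the standard propagation of H\"older continuity to the whole reachable set, is correct.
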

\begin{proof}
Given $x\neq x_o$, $x\in B_{R/2}(x_o)$, we want to find $\R^m\ni\tau\geq0$ so that $x_{T_m}=x_o$, where $(x_s)_s$ has been defined above. Notice that by Gronwall estimates on trajectories and if $T_m\leq \sigma$, therefore for $\tau\geq0$ sufficiently small, then the continuous function $\rho(x,\tau)=x_{T_m}^o-x_{T_m}$ satisfies
$$|\rho(x,\tau)|\leq C|x-x_o|.$$
In particular $\lim_{x\to x_o}\sup_{|\tau|\leq\sigma}|\rho(x,\tau)|=0$.
We use 
 Lemma \ref{lemestimate} for $i=m$ and conclude that
\begin{equation}\label{eqfindu}
x_{T_{m}}-x_o=x_{T_{m}}-x_{T_{m}}^o+x_{T_{m}}^o-x_o=A_o\tau+\hat \gamma(\tau)\sum_{i=1}^m\tau_i-\rho(x,\tau)=(A_o+\gamma(\tau))\tau-\rho(x,\tau),
\end{equation}
where $\hat\gamma$ is a continuous function with values in $\R^n$ such that $\hat \gamma(0)=0$
and $\gamma(\tau)$ is $n\times m$ matrix valued vanishing as $\tau\to0$ with $m$ columns all equal to $\hat\gamma$. Finding $\tau\geq0$ so that the right hand side of (\ref{eqfindu}) is zero is the content of Lemma \ref{lempetrovter}(i) in the Appendix. It uses a fixed point argument and it is where the assumption (A2) is finally needed. Thus there are $\delta,\delta'>0$ such that for all $x,|x-x_o|<\delta'$ we can find $\tau\geq0$, $|\tau|<\delta$ and for such $\tau$, $x_{T_m}=x_o$. The corresponding trajectory $x_s$ then reaches the point $x_o$ at time $T_m$. Moreover Lemma \ref{lempetrovter} also shows that for the specific $\tau$ we have
$$\sum_{i=1}^m\tau_i\leq K\sup_{|\tau|\leq\delta}|\rho(x,\tau)|\leq C|x-x_o|$$
and therefore (we may assume that $\tau_i\leq1$ for all $i$)
$$T_m=\sum_{i=1}^mj(i)\tau_i^{1/k_i}\leq \hat C(\sum_{i=1}^m\tau_i^{1/\bar k})\leq \tilde C|x-x_o|^{1/\bar k}.
$$
The final statement of the Theorem concerning H\"older regularity of the minimum time function now follows by well known arguments, see e.g. \cite{bcd}.
\end{proof}
\begin{rem}
In the case of a point as target, Remark \ref{remsymmetric} applied to symmetric systems outlines the fact that the set $\mathcal L\supset\{h(x_o):h\in\hbox{Lie}(F)\}=L(F)(x_o)$, where Lie$(F)$ is the Lie algebra generated by all the available vector fields of the control system. If $L(F)(x_o)=\R^n$, then $\mathcal L$ contains a positive basis on $\R^n$ and we can recover in Theorem \ref{teopoint} the classical sufficient condition for small time local controllability of Rashevski and Chow \cite{cho}.\\
Similarly Remark \ref{remaffine} applied to the case of affine systems with an equilibrium point at $x_o$ outlines the fact that $L^a(F)(x_o)=\cup_{\lambda\geq0}\lambda\;\hbox{co}\{ad^k_{f_o}f(x_o):k\in\mathbb N,\,F(x,a)=f_o(x)+f(x),a\in A\}\subset \cup_{\lambda\geq0}\lambda\;\hbox{co}\mathcal L$. Here $f_o$ is the drift, $f_o(x_o)=0$, $f$ is a generic available vector field in the symmetric part and co$C$ is the convex hull of the set $C$. If $L^a(F)(x_o)=\R^n$ then the set $\mathcal L$ contains a positive basis of $\R^n$. Therefore in Theorem \ref{teopoint} we also recover the sufficient condition of Frankovska \cite{fra1} and Kawski \cite{kaw}.
\end{rem}

\subsection{A general manifold with a boundary}

In this section we will discuss the attainability of general smooth targets, namely manifolds possibly with a boundary. We therefore assume that in the neighborhood of the point $x_o$ the target is described by a set of equations and at most one inequality. Let $u=\;^t(u_1,\dots,u_h):\R^n\to\R^h$ and possibly also $u_{h+1}:\R^n\to\R$ be at least of class $C^1$ and such that $\nabla u_i(x_o)$, $i=1,\dots,h,(h+1)$, are linearly independent. The previous assumption stands throughout the section.
Thus the target is locally defined in the neighborhood of $x_o$ in one of the two following ways
\begin{equation}\label{eqvincolo}\begin{array}{c}
{\mathcal T}=\{x:u(x)=u(x_o)\},\\
{\mathcal T}=\{x:u(x)=u(x_o),\;u_{h+1}(x)\geq u_{h+1}(x_o)\}.
\end{array}\end{equation}
In the second case $\mathcal T$ has a boundary (as a manifold) 
$$\partial{\mathcal T}=\{x:u(x)=u(x_o),\;u_{h+1}(x)= u_{h+1}(x_o)\}.$$
We will suppose for convenience that $1\leq h\leq n-1$, as the two cases $h=0$, $h=n$ have been previously treated. In particular the target has no interior points.
We will keep all notations as in the previous Section 4.1. The constants $R,\sigma,L,M$ will also follow Section 2.1.
Next we compare the variation of a given vector valued smooth function.
\begin{lem}\label{lema2}
(i) Let $(x^o_s)_{s\in[0,t]}$ be a solution of (\ref{eqsystem}) starting at $x_o$ and $(a_s)_{s\in[0,t]}$ be the corresponding control. Let $(x_s)_{s\in[0,t]}$ be the trajectory of (\ref{eqsystem}) starting at $x\in B_{R/2}(x_o)$ with the same control $a_s$ and let $u:\R^n\to\R^h$, $u=(u_1,\dots,u_h)$, be a function of class $C^2$. Then, if $t,|x-x_o|>0$ are sufficiently small,
\begin{equation}\label{eqestu}
u(x_t)-u(x)=u(x^o_t)-u(x_o)+\alpha(x,t),
\end{equation}
where $\alpha:B_{R/2}(x_o)\times [0,\sigma]\to \R^h$ is a continuous function and $|\alpha(x,t)|\leq C|x-x_o|t$.

(ii) Suppose moreover that $B_R(x_o)\times A\ni (x,a)\mapsto Du(x)F(x,a)$ only depends on a restricted group of variables $x_{l_1},\dots,x_{l_p}$ and $a\in A$, and that
the corresponding components of the vector field $F_{l_1},\dots,F_{l_p}$ also depend only on the same group of variables as well.
Suppose moreover
\begin{equation}\label{eqassrestrict}
|Du(x)F(x,a)-Du(y)F(y,a)|\leq \hat L\sqrt{\sum_{j=1}^p(x_{l_j}-y_{l_j})^2},
\end{equation}
for all $x,y\in B_R(x_o)$ and $a\in A$.
Then (\ref{eqestu}) holds with $\alpha$ satisfying the stronger estimate
$$|\alpha(x,t)|\leq Ct\sqrt{\sum_{j=1}^p(x_{l_j}-(x_o)_{l_j})^2}.$$
\end{lem}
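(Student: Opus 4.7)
The strategy is to express both variations $u(x_t)-u(x)$ and $u(x^o_t)-u(x_o)$ as integrals of the ``Lie derivative'' $Du\cdot F$ along the respective trajectories, so that $\alpha(x,t)$ becomes the single integral of the difference of these quantities. Bounding the integrand by the distance between trajectories and applying Gronwall then yields both claims, where in part~(ii) one only needs the distance between selected coordinates.

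For (i), subtracting the fundamental theorem of calculus applied to $s\mapsto u(x_s)$ and $s\mapsto u(x^o_s)$ gives
\begin{equation*}
\alpha(x,t)=\int_0^t\bigl[Du(x_s)F(x_s,a_s)-Du(x^o_s)F(x^o_s,a_s)\bigr]\,ds.
\end{equation*}
Since $u\in C^2$ and $F$ is Lipschitz in $x$ uniformly in $a$, the map $x\mapsto Du(x)F(x,a)$ is Lipschitz on $B_R(x_o)$ with some constant $\tilde L$ independent of $a\in A$. Combining with the standard Gronwall bound $|x_s-x^o_s|\leq|x-x_o|e^{Ls}$ (valid as long as both trajectories remain in $B_R(x_o)$, i.e.\ for $t\leq\sigma$ and $|x-x_o|\leq R/2$), I obtain $|\alpha(x,t)|\leq\tilde L e^{L\sigma}|x-x_o|\,t$, which is the desired estimate. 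Continuity of $\alpha$ in $(x,t)$ follows from continuous dependence of the trajectories on initial data, exactly as in Remark~\ref{remneed}.

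For (ii), the key observation is that the stated dependence hypotheses force the dynamics of the projected coordinates $\pi(x):=(x_{l_1},\dots,x_{l_p})$ to form an autonomous subsystem. Indeed, since $F_{l_1},\dots,F_{l_p}$ depend only on $\pi(x)$ and $a$, both $\pi(x_s)$ and $\pi(x^o_s)$ solve the same reduced ODE $\dot y=\tilde F(y,a_s)$ with Lipschitz right-hand side, starting from $\pi(x)$ and $\pi(x_o)$ respectively. Gronwall applied in this reduced phase space gives
\begin{equation*}
|\pi(x_s)-\pi(x^o_s)|\leq e^{Ls}|\pi(x)-\pi(x_o)|=e^{Ls}\sqrt{\sum_{j=1}^p(x_{l_j}-(x_o)_{l_j})^2}.
\end{equation*}
Feeding this bound together with hypothesis (\ref{eqassrestrict}) into the same integral representation of $\alpha(x,t)$ yields the sharpened estimate with $C=\hat L e^{L\sigma}$.

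The only subtle point, and the one worth verifying carefully, is that the reduced subsystem on the coordinates $x_{l_1},\dots,x_{l_p}$ is genuinely closed: this is precisely what the dependence hypotheses guarantee, so that one can apply Gronwall in the projected phase space without picking up any contribution from $|x_s-x^o_s|$ in the complementary directions. Everything else is routine.
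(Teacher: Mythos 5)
Your proposal is correct and follows essentially the same route as the paper: the same integral representation of $\alpha$, the same Lipschitz-plus-Gronwall estimate for (i), and the same reduction to the closed subsystem in the coordinates $x_{l_1},\dots,x_{l_p}$ for (ii). (Minor wording note: the reduced subsystem is not literally autonomous since it still depends on the control $a_s$; ``closed/self-contained'' is the accurate description, which you in fact use at the end.)
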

\begin{proof}
Note that (\ref{eqestu}) holds with
$$\alpha(x,t)=u(x_t)-u(x)-(u(x^o_t)-u(x_o))=\int_0^t\left(Du(x_s) F(x_s,a_s)-Du(x^o_s) F(x^o_s,a_s)\right)\;ds$$
and then by Gronwall inequality
$$\begin{array}{ll}
|\alpha(x,t)|&\leq \int_0^t(\|D^2u\|_\infty M+\|Du\|_\infty L)|x_s-x_s^o|\;ds\\
&\leq (\|D^2u\|_\infty M+\|Du\|_\infty L)|x-x_o|\int_0^te^{Ls}\;ds\leq C|x-x_o|t,
\end{array}$$
for $|x-x_o|$ sufficiently small, $t\in[0,\sigma]$ and $C$ depending only on the data and $\sigma$.

The proof of (ii) is similar 
by considering the subsystem of (\ref{eqsystem}) of the group of components $F_{l_1},\dots,F_{l_p}$ and using (\ref{eqassrestrict}). \end{proof}

We are now going to assume the following conditions:
\begin{itemize}
\item[(B1)]{There are $m$ groups of available vector fields: $f^{(i)}_1,\dots,f^{(i)}_{j(i)}$, $i=1,\dots,m$ and integers $k_i>0$ such that $f^{(i)}_r\in C^{k_i-1}(\R^n;\R^n)$ and $u\in C^{\bar k}(\R^n;\R^h)$, if $x_o\in\mathcal T\backslash\partial \mathcal T$ (respectively $\hat u=(u,u_{h+1})\in C^{\bar k}(\R^n;\R^{h+1})$, if $x_o\in\partial\mathcal T$), where $\bar k=\max_ik_i$. We suppose for convenience that $j(i)=1$ if $k_i=1$. We assume that
$$\begin{array}{c}
(H_{f^{(i)}_1}\boxplus\dots\boxplus H_{f^{(i)}_{j(i)}})^ru_l(x_o)=0,\quad 0\leq r<k_i,\;i=1,\dots,m,\;l=1,\dots,h\hbox{ (resp. }h+1),\\
\R^h\ni k_i!A^o_i=(H_{f^{(i)}_1}\boxplus\dots\boxplus H_{f^{(i)}_{j(i)}})^{k_i}u(x_o)\neq0,\\
 (\hbox{resp. }
\R^{h+1}\ni k_i!\;^t(A^o_i,s_i)=(H_{f^{(i)}_1}\boxplus\dots\boxplus H_{f^{(i)}_{j(i)}})^{k_i}\hat u(x_o)\neq0),
\end{array}$$
i.e. $u$ has $k_i-$th order rate of change in the direction of $A^o_i\in\mathcal L_u$ (resp. $(A^o_i,s_i)\in\mathcal L_{\hat u}$).}
\end{itemize}
If $x_o\in\mathcal T\backslash\partial\mathcal T$,
we construct a $h\times m$ matrix, written in columns as
$A_o=\left(A_i^o\right)_{i=1,\dots,m}$ or else if $x_o\in\partial\mathcal T$,
we add to $A_o$ an extra row 
$$\hat A_o=\left(\begin{array}{c}A_o\\s\end{array}\right),
\quad s=\left(\frac1{k_i!}(H_{f^{(i)}_1}\boxplus\dots\boxplus H_{f^{(i)}_{j(i)}})^{k_i}u_{h+1}(x_o)\right)_{i=1,\dots,m}.$$ 
In particular in $A_o$ (respectively $\hat A_o$) there is only a column for each group of vector fields and it is not zero. We suppose that:
\begin{itemize}
\item[(B2)]{If $x_o\in\mathcal T\backslash\partial\mathcal T$, the $m$ columns of the matrix $A_o$ form a {\it positive basis} of $\R^h$. In particular $A_o$ has rank $h$. If instead $x_o\in\partial\mathcal T$ then the following holds: the matrix $\hat A_o$ has rank $h+1$ and
for all $p\in \R^h$ and $r\geq0$ there exists $\lambda=\;^t(\lambda_1,\dots,\lambda_m)\geq0$ such that
$p=A_o\lambda$, $r\leq s\cdot\lambda$.
}
\end{itemize}
The second part of assumption (B2) modifies the positive basis condition in order to be useful at boundary points of the manifold.

For any given family of nonnegative times $\tau=(\tau_1,\dots,\tau_m)\geq0$ sufficiently small, and an initial point $x\in B_{R/2}(x_o)$, we build as in the previous Section 6.1, the trajectories $x^o_s,\;x_s$ for $s\in[0,T_m]$ and $x^{o,i}_s$, for $s\in[0,j(i)\tau_i^{1/k_i}]$.
The next statement uses an additional assumption to (B1-2), in order to prove that the target is STLA at $x_o$. This looks strong in some examples because it requires first order and higher order terms to vanish independently.
\begin{thm}\label{teomanifold} Consider a nonlinear control system in the form (\ref{eqsystem}) and $x_o\in\mathcal T$, where the target $\mathcal T$ is locally described as above in the section. Suppose that (B1) and (B2) are satisfied at $x_o$. 
In addition we require that the vector fields satisfying (B1) also satisfy
$$(H_{f_1^{(i)}}\boxplus \dots \boxplus H_{f_{j(i)}^{(i)}})^rI(x_o)=0,
$$
for all $i=1,\dots,m$ and $1\leq r<k_i$, where $k_i$ comes from assumption (B1).
Then the target is STLA at $x_o$ and the minimum time function $T(x)$ at a point $x$ in the neighborhood of $x_o$ satisfies (for appropriate $\delta'\leq\sigma$)
\begin{equation}\label{eqtime}
T(x)\leq C|x-x_o|^{1/\bar k}\quad x\in B_{\delta'}(x_o).\end{equation}
If moreover (\ref{eqtime}) holds near all points $\hat x\in B_{\delta'}(x_o)\cap\mathcal T$ and $C,\delta',\bar k$ are independent of $\hat x$, then (\ref{eqtime}) improves to
$$T(x)\leq Cd(x,\mathcal T)^{1/\bar k}.
$$
If in particular the assumptions hold at all $x_o\in\mathcal T$ then $T$ is locally $1/\bar k-$ H\"older continuous in its domain, if $F$ satisfies (\ref{eqlip}) globally.
\end{thm}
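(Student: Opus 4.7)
The plan is to mimic the strategy of Theorem~\ref{teopoint}, with the function $u$ (or $\hat u=(u,u_{h+1})$ in the boundary case) playing the role that the identity played for the point target. The supplementary hypothesis $(H_{f^{(i)}_1}\boxplus\dots\boxplus H_{f^{(i)}_{j(i)}})^r I(x_o)=0$ for $1\le r<k_i$ is precisely what makes Lemma~\ref{lemestimate} applicable to the present reference construction: the endpoint of the $i$-th balanced piece satisfies $x^{o,i}_{j(i)\tau_i^{1/k_i}}-x_o=O(\tau_i)$, and the concatenation estimate $|x^o_{T_i}-x_o|\le C(\tau_1+\cdots+\tau_i)$ follows by the same induction as in Lemma~\ref{lemestimate}. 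This is the control on the trajectory drift that we will need to propagate errors through segments.

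Given $\tau=(\tau_1,\dots,\tau_m)\ge0$ small and $x\in B_{R/2}(x_o)$, build the concatenated reference trajectory $x^o_s$ and the generic trajectory $x_s$ (starting at $x$, same control) as in Section~4.1. On each segment $[T_i,T_{i+1}]$ the trajectory $x^o_s$ starts at $x^o_{T_i}$ with the same control used by $x^{o,i+1}_s$ starting at $x_o$, so Lemma~\ref{lema2}(i) yields
\[
u(x^o_{T_{i+1}})-u(x^o_{T_i})=u(x^{o,i+1}_{T_{i+1}-T_i})-u(x_o)+\alpha_{i+1},
\qquad |\alpha_{i+1}|\le C|x^o_{T_i}-x_o|(T_{i+1}-T_i).
\]
By Proposition~\ref{proptaylorvector} and the vanishing conditions in (B1), the principal part is $u(x^{o,i+1}_{T_{i+1}-T_i})-u(x_o)=A^o_{i+1}\tau_{i+1}+\tau_{i+1}o(1)$ as $\tau_{i+1}\to0$. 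Using the drift bound, $|\alpha_{i+1}|\le \hat C(\tau_1+\cdots+\tau_i)\tau_{i+1}^{1/k_{i+1}}$, so summing over $i$ the total error is bounded by $|\tau|^{1+1/\bar k}=|\tau|o(1)$. Telescoping, $u(x^o_{T_m})-u(x_o)=A_o\tau+|\tau|o(1)$ (and analogously with $\hat A_o$ for $\hat u$ in the boundary case).

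Next, a standard Gronwall estimate as in Theorem~\ref{teopoint} gives a continuous $\rho(x,\tau)$ with $\rho(x,\tau)=u(x^o_{T_m})-u(x_{T_m})$ and $|\rho(x,\tau)|\le C|x-x_o|$, vanishing uniformly as $x\to x_o$. Therefore
\[
u(x_{T_m})-u(x_o)=(A_o+\gamma(\tau))\tau-\rho(x,\tau),
\]
with $\gamma(\tau)\to0$ as $\tau\to0$. To reach the target we now need to solve $u(x_{T_m})=u(x_o)$ (together, in the boundary case, with $u_{h+1}(x_{T_m})\ge u_{h+1}(x_o)$, which becomes an inequality on the extra row of $\hat A_o$). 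This is exactly the setting of Lemma~\ref{lempetrovter} of the Appendix: by (B2), part~(i) of the lemma solves the pure-equality case, while the modified positive-basis-with-inequality formulation of (B2) invokes the part of that lemma designed for boundary points. The fixed point $\tau\ge0$ produced by the lemma satisfies $|\tau|\le K\sup_{|\tau|\le\delta}|\rho(x,\tau)|\le \tilde C|x-x_o|$, hence the reaching time is $T_m=\sum_ij(i)\tau_i^{1/k_i}\le C|x-x_o|^{1/\bar k}$.

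The two final statements follow by essentially the same arguments already used in the paper: the passage from the pointwise estimate to $T(x)\le Cd(x,\mathcal T)^{1/\bar k}$ replays the proof of the Proposition after Theorem~\ref{teofat}, using the local projection on the manifold in place of the signed distance; and local $1/\bar k$-H\"older continuity of $T$ on its domain then follows by well-known arguments (see e.g.~\cite{bcd,bfs}). The two points I expect to be delicate are: (a) the boundary case, where the inequality and the equalities must be solved simultaneously by the single vector $\tau\ge0$, so one must carefully verify that the perturbed map still satisfies the hypotheses of the boundary-version of Lemma~\ref{lempetrovter}; and (b) the error bookkeeping in the telescoping identity for $u(x^o_{T_m})-u(x_o)$, where the factor $|\tau|^{1/\bar k}$ in $\alpha_i$ is exactly what allows the total deviation to remain $|\tau|o(1)$, which is what the fixed-point argument needs.
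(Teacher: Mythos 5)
Your plan matches the paper's proof essentially step for step: you identify the same role for the extra vanishing condition on $(H_{f_1^{(i)}}\boxplus\cdots\boxplus H_{f_{j(i)}^{(i)}})^r I(x_o)$ (namely, it lets Lemma~\ref{lemestimate} deliver the drift bound $|x^o_{T_i}-x_o|\le C(\tau_1+\cdots+\tau_i)$), you telescope $u(x^o_{T_m})-u(x_o)$ using Lemma~\ref{lema2}(i) on each segment, bound the cross-terms $|\alpha_{i+1}|\lesssim(\tau_1+\cdots+\tau_i)\tau_{i+1}^{1/k_{i+1}}=|\tau|\,o(1)$ exactly as the paper does via~(\ref{eqendest}), and then close with the fixed-point Lemma~\ref{lempetrovter} (part (i) in the interior case, part (ii) for boundary points with the extra row $s$ and the sign constraint on $u_{h+1}$). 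The two caveats you flag (careful bookkeeping in the telescoping error, and solving the inequality jointly with the equalities at boundary points) are exactly the points the paper handles via Lemma~\ref{lemestimate} and the modified positive-basis condition in~(B2); the argument is correct and follows the same route.
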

\begin{proof}
We proceed similarly to the proofs of Theorem \ref{teopoint} and Lemma \ref{lemestimate}, so we only point out the main differences. With the notations of those results, for any $x\in B_{R/2}(x_o)$, we want to find a nonnegative vector $\tau\in\R^m$ so that $u(x_{T_m})=u(x_o)$ and in addition $u_{h+1}(x_{T_m})\geq u_{h+1}(x_o)$ when $x_o\in\partial\mathcal T$. Now, similarly to (\ref{eqfindu}),
$$u(x_{T_m})-u(x_o)=-\rho(x,\tau)+u(x_{T_m}^o)-u(x_o),$$
and again $\rho$ is continuous and $|\rho(x,\tau)|\leq C|x-x_o|$, by the local Lipschitz continuity of $u$, if $\tau$ is sufficiently small.
By the assumption (B1) we know that
\begin{equation}\label{eqaaexpand}
u(x^{o,i}_{j(i)\tau_i^{1/k_i}})-u(x_o)=A^o_i\tau_i+\tau_io(1),
\end{equation}
as $\tau\to0$, for $i=1\dots m$, since $A^o_i\in \mathcal L_u$. 
Moreover we also have
$$u_{h+1}(x^{o,i}_{k_i\tau_i^{1/k_i}})-u_{h+1}(x_o)=s_i\tau_i+\tau_io(1),$$
if $x_o\in\partial\mathcal T$.
We will proceed in the case $x_o\in\mathcal T\backslash\partial\mathcal T$ and modify accordingly at the end if $x_o\in\partial\mathcal T$.
We want to proceed recursively as in Lemma \ref{lemestimate} and assume that after the $i-$th step
\begin{equation}\label{eqindumanifold}
u(x_{T_i}^o)-u(x_o)=\sum_{j=1}^iA^o_j\tau_j+(\tau_1+\dots+\tau_i)o(1).
\end{equation}
Observe that (\ref{eqindumanifold}) holds for $i=1$ by (\ref{eqaaexpand}). Given (\ref{eqindumanifold})
\begin{equation}\label{eqaabbcc}\begin{array}{ll}
u(x_{T_{i+1}}^o)-u(x_o)&=u(x_{T_{i+1}}^o)-u(x_{T_{i+1}-T_i}^{o,i+1})+u(x_{T_{i+1}-T_i}^{o,i+1})-u(x_o)-(u(x_{T_{i}}^o)-u(x_o))\\
&\quad +(u(x_{T_{i}}^o)-u(x_o))\\
&=[((u(x_{T_{i+1}}^o)-u(x_{T_{i}}^o))-(u(x_{T_{i+1}-T_i}^{o,i+1})-u(x_o)))]+A^o_{i+1}\tau_{i+1}+\tau_{i+1}o(1)\\
&+\sum_{j=1}^iA^o_j\tau_j+(\tau_1+\dots+\tau_i)o(1).
\end{array}\end{equation}
What remains to be done is the estimate of the square bracket in the last line.
By using Lemma \ref{lema2}(i) we get
\begin{equation}\label{eqendest}
|((u(x_{T_{i+1}}^o)-u(x_{T_{i}}^o))-(u(x_{T_{i+1}-T_i}^{o,i+1})-u(x_o))|\leq C|x_{T_{i}}^o-x_o|(T_{i+1}-T_i).\end{equation}
We now need $|x_{T_{i}}^o-x_o|\leq \hat C(\tau_1+\dots+\tau_i)+o(1)$ to conclude and this does not seem to hold in general without further assumptions.
We can obtain it by Lemma \ref{lemestimate} with the extra assumption so that we finally get
\begin{equation}\label{eqaaestmanifold}
u(x_{T_{m}})-u(x_o)=A_o\tau+o(1)\sum_{i=1}^m\tau_i-\rho(x,\tau)=(A_o+\gamma(\tau))\tau-\rho(x,\tau),\end{equation}
with an appropriate continuous, vector valued function $\gamma(\tau)$ vanishing as $\tau\to0$.
In the case that $x_o\in\partial\mathcal T$ then with easy changes, if $\hat\rho(x,\tau)=\hat u(x^o_{T_m})-\hat u(x_{T_m})$, we obtain similarly
$$\hat u(x_{T_{m}})-\hat u(x_o)=(\hat A_o+\hat \gamma(\tau))\tau-\hat \rho(x,\tau).$$
We then complete the statement of the Theorem in both cases by using Lemma \ref{lempetrovter} in the Appendix similarly to the proof of Theorem \ref{teopoint}, since in the case $x_o\in\partial\mathcal T$ we need $u(x_{T_{m}})\geq u(x_o)$ and $u_{h+1}(x_{T_{m}})\geq u_{h+1}(x_o)$.
\end{proof}
\begin{rem}
Ideally we need the estimate (\ref{eqendest}) to have in the right hand side $C|u(x_{T_{i}}^o)-u(x_o)|(T_{i+1}-T_i)=(\tau_1+\dots+\tau_i)o(1)$, by the induction assumption, to be able to conclude as in Lemma \ref{lemestimate} without further assumptions. This does not seem to be reachable in general.\\
If $\bar k=2$ and $k_i=2$ then notice that the extra condition (A1) for the i-th group requires
$$0=(H_{f_1^{(i)}}\boxplus \dots\boxplus H_{f_{j(i)}^{(i)}})I(x_o)=H_{f_{1}+\dots+ f_{j(i)}}I(x_o)=
(f_1^{(i)}+\dots+f_{j(i)}^{(i)})(x_o),$$
i.e. the vector fields of the $i-$th group are balanced at $x_o$. This kind of assumption was made in the paper by the author \cite{so} to prove the corresponding result for second order sufficient conditions in the case of symmetric systems. Similarly one could make the extra assumption more transparent and direct on the vector fields also for higher values of the parameter.
\end{rem}
With a slight modification in the proof, we can in fact relax the extra assumption in the statement of Theorem \ref{teomanifold}, which is too restrictive in some examples, by using Lemma \ref{lema2}(ii) instead.
\begin{cor}\label{cormanifold}
Consider the nonlinear control system (\ref{eqsystem}) and $x_o\in\mathcal T$, where the target $\mathcal T$ is locally described in one of two ways in (\ref{eqvincolo}). Suppose that (B1) and (B2) are satisfied. 
In addition we require the following:
$B_R(x_o)\times A\ni (x,a)\mapsto Du(x)F(x,a)$ only depends on a restricted group of variables $x_{l_1},\dots,x_{l_p}$ and $a\in A$, and (\ref{eqassrestrict}) holds true, for all $x,y\in B_R(x_o)$ and $a\in A$.
Suppose also that the corresponding components of the vector field $F_{l_1},\dots,F_{l_p}$ also depend only on the same group of variables and the control as well, and
$$(H_{f_1^{(i)}}\boxplus \dots \boxplus H_{f_{j(i)}^{(i)}})^rI_j(x_o)=0,
$$
for all $i=1,\dots,m$, $j\in\{l_1,\dots,l_p\}$ and $1\leq r<k_i$, where $k_i$ comes from assumption (B1).
Then the target is STLA at $x_o$ and all other conclusions of Theorem \ref{teomanifold} hold true.
\end{cor}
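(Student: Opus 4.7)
The plan is to replay the argument of Theorem \ref{teomanifold} verbatim up to the induction step (\ref{eqaabbcc}), and then replace the single place where that proof needed the extra assumption, namely the estimate (\ref{eqendest}). Under the hypotheses of the Corollary, the map $(x,a)\mapsto Du(x)F(x,a)$ depends only on the variables $x_{l_1},\dots,x_{l_p}$ and satisfies the Lipschitz bound (\ref{eqassrestrict}), so part (ii) of Lemma \ref{lema2} applies in place of part (i) and sharpens (\ref{eqendest}) to
\[
\bigl|(u(x_{T_{i+1}}^o)-u(x_{T_{i}}^o))-(u(x_{T_{i+1}-T_i}^{o,i+1})-u(x_o))\bigr|\leq C(T_{i+1}-T_i)\sqrt{\sum_{j=1}^p((x_{T_i}^o)_{l_j}-(x_o)_{l_j})^2}.
\]
Hence it suffices to control only the $l_1,\dots,l_p$ components of $x_{T_i}^o-x_o$, rather than its full norm.

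To obtain this weaker control, I would apply Lemma \ref{lemestimate} to the autonomous subsystem formed by the components $F_{l_1},\dots,F_{l_p}$. This is legitimate because those components, by hypothesis, depend only on $x_{l_1},\dots,x_{l_p}$ and on the control, so the reduced trajectory in $\R^p$ is well-defined independently of the remaining coordinates. The additional assumption of the Corollary,
\[
(H_{f_1^{(i)}}\boxplus \dots \boxplus H_{f_{j(i)}^{(i)}})^rI_j(x_o)=0\quad\text{for }j\in\{l_1,\dots,l_p\},\ 1\leq r<k_i,
\]
is precisely the hypothesis (A1) of Lemma \ref{lemestimate} transcribed for the identity function of the reduced system. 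Consequently Lemma \ref{lemestimate} yields, for all sufficiently small $\tau\geq 0$,
\[
\sqrt{\sum_{j=1}^p((x_{T_i}^o)_{l_j}-(x_o)_{l_j})^2}\leq \hat C\,(\tau_1+\dots+\tau_i).
\]

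Inserting this into the sharpened version of (\ref{eqendest}) produces an error of order $(\tau_1+\dots+\tau_i)(T_{i+1}-T_i)$, which is $o(1)(\tau_1+\dots+\tau_{i+1})$ as $\tau\to0$ and is therefore absorbed into the remainder of (\ref{eqaabbcc}). The induction then closes exactly as in Theorem \ref{teomanifold} and one recovers the expansion (\ref{eqaaestmanifold}), together with its $\hat u$-analogue when $x_o\in\partial\mathcal T$. The rest of the proof, i.e.\ invoking Lemma \ref{lempetrovter} of the Appendix to solve $u(x_{T_m})=u(x_o)$ (with the inequality on $u_{h+1}$ at boundary points), extracting a nonnegative $\tau$ with $|\tau|\leq C|x-x_o|$, and converting this into the H\"older estimate $T_m\leq \tilde C|x-x_o|^{1/\bar k}$, is identical to the argument of Theorem \ref{teomanifold}. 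The main point worth double-checking is precisely the reduction step: that the selected coordinates $x_{l_1},\dots,x_{l_p}$ do form a self-contained subsystem on $\R^p$ to which Lemma \ref{lemestimate} applies literally, with uniform Lipschitz and boundedness constants inherited from Section 2.1 under the assumed joint dependence of both $Du\cdot F$ and the components $F_{l_j}$ on the same restricted group of variables.
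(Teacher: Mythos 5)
Your proposal follows the paper's proof essentially verbatim: replace Lemma \ref{lema2}(i) by Lemma \ref{lema2}(ii) in the estimate (\ref{eqendest}), observe that the coordinates $l_1,\dots,l_p$ form a self-contained subsystem, and apply the argument of Lemma \ref{lemestimate} (which internally uses Lemma \ref{lema1}) to that subsystem to control $|(x_{T_i}^o-x_o)_{l_1,\dots,l_p}|$ by $(\tau_1+\dots+\tau_i)$, after which the induction, (\ref{eqaaestmanifold}), and Lemma \ref{lempetrovter} close the proof exactly as in Theorem \ref{teomanifold}. This is correct and matches the paper's route.
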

\begin{proof} If $x_o\in\mathcal T\backslash\partial\mathcal T$, we proceed as in the proof of Theorem \ref{teomanifold} until we get to (\ref{eqaabbcc}). By using Lemma \ref{lema2} (ii) instead, we modify (\ref{eqendest}) according to the current additional assumption and obtain in the right hand side
$$C|(x_{T_{i}}^o-x_o)_{l_1,\dots,l_p}|(T_{i+1}-T_i)$$
instead, where the vector $x_{l_1,\dots,l_p}$ only contains the indicated $p$ coordinates. We now apply Lemma \ref{lema1} to the subsystem of the space coordinates $l_1,\dots,l_p$ and use it as in Lemma \ref{lemestimate} to finally achieve that
$$(x_{T_i}^o-x_o-\sum_{j=1}^iv^o_j\tau_j)_{l_1,\dots,l_p}=(\tau_1+\dots+\tau_i)o(1),
$$
where $v^o_j=(1/k_i!)(H_{f_1^{(i)}}\boxplus \dots \boxplus H_{f_{j(i)}^{(i)}})^{1/k_i}I_j(x_o)$ for $j\in\{l_1,\dots,l_p\}$. We conclude the induction step
$$u(x_{T_{i+1}}^o)-u(x_o)=\sum_{j=1}^{i+1}A^o_j\tau_j+(\tau_1+\dots+\tau_i)o(1)
$$
and again (\ref{eqaaestmanifold}) is satisfied. Similarly, we complete the argument if $x_o\in\partial\mathcal T$, concluding the proof by applying Lemma \ref{lempetrovter}(ii) in the Appendix and where the assumption (B2) is required.
\end{proof}
Another variation of the result is in the following.
\begin{cor}\label{cormanifoldbb} As in the previous Corollary assume (B1) and (B2) for the control system (\ref{eqsystem}) and $x_o\in\mathcal T$. Suppose in addition that for $j=0,1$ according to the fact that $x_o\in\mathcal T\backslash\partial \mathcal T$ or $x_o\in \partial\mathcal T$: the first $h+j$ columns of the jacobian $Du(x_o)$ (resp. $D(u,u_{h+1})(x_o)$) are not singular and that the last $(n-h-j)$ components of the system $(F_i)_{h+j+1,\dots,n}$, only depend on the corresponding coordinates $(x_i)_{h+j+1,\dots,n}$ and the control $a\in A$. Moreover
$$(H_{f_1^{(i)}}\boxplus \dots \boxplus H_{f_{j(i)}^{(i)}})^rI_l(x_o)=0,
$$
for all $i=1,\dots,m$, $l\in\{h+j+1,\dots,n\}$ and $1\leq r<k_i$, where $k_i$ comes from assumption (B1).
Then the target is STLA at $x_o$ and all other conclusions of Theorem \ref{teomanifold} hold true.
\end{cor}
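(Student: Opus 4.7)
The plan is to reduce Corollary~\ref{cormanifoldbb} to Theorem~\ref{teomanifold}. Hypotheses (B1) and (B2) are already in force, so it suffices to check that the additional structural assumptions imposed here imply the extra condition required by Theorem~\ref{teomanifold}, namely
\[
(H_{f^{(i)}_1}\boxplus\dots\boxplus H_{f^{(i)}_{j(i)}})^r I(x_o)=0,\qquad 1\le r<k_i,
\]
for every group $i=1,\dots,m$. Once this vanishing is established, the STLA conclusion and the $1/\bar k$-H\"older estimate on the minimum time function follow verbatim from Theorem~\ref{teomanifold}.

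For the $i$-th group write $B_r=(H_{f^{(i)}_1}\boxplus\dots\boxplus H_{f^{(i)}_{j(i)}})^r I(x_o)\in\R^n$, and split each vector into its first $h+j$ and last $n-h-j$ components as $B_r=(B_r',B_r'')$. The extra hypothesis that $(H_{f^{(i)}_1}\boxplus\dots)^r I_l(x_o)=0$ for $l>h+j$ is precisely $B_r''=0$ when $1\le r<k_i$. The goal is then to show $B_r'=0$ as well, which I would carry out by induction on $r$.

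Assume $B_s=0$ for all $1\le s<r$ (the hypothesis is empty for $r=1$). Proposition~\ref{proptaylorvector} applied to the identity function gives
\[
y_t:=x^{o,i}_{j(i)t}-x_o=\tfrac{t^r}{r!}B_r+O(t^{r+1}),
\]
while Proposition~\ref{proptaylor} applied to the scalar constraints $u_l$, $l=1,\dots,h$ (or $\hat u_l$, $l=1,\dots,h+1$, on the boundary) combined with the vanishing of $(H_{f^{(i)}_1}\boxplus\dots)^s u(x_o)$ for $s<k_i$ coming from (B1) yields $u(x^{o,i}_{j(i)t})-u(x_o)=O(t^{k_i})$. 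Comparing this with the scalar Taylor expansion $u(x_o+y_t)-u(x_o)=Du(x_o)y_t+R(y_t)$ and using that $R(y_t)=O(|y_t|^2)=O(t^{2r})$ is of strictly higher order than $t^r$, one identifies the $t^r$-coefficient as
\[
\tfrac{1}{r!}Du(x_o)B_r=\tfrac{1}{r!}(H_{f^{(i)}_1}\boxplus\dots)^r u(x_o)=0.
\]
Writing $Du(x_o)=(Du(x_o)_{\cdot'}\,|\,Du(x_o)_{\cdot''})$ and using $B_r''=0$, this reads $Du(x_o)_{\cdot'}\,B_r'=0$; by the assumption that the first $h+j$ columns of $Du(x_o)$ (respectively $D\hat u(x_o)$ when $x_o\in\partial\mathcal T$) are nonsingular, I conclude $B_r'=0$ and hence $B_r=0$, closing the induction.

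Having verified the extra assumption of Theorem~\ref{teomanifold} for every group, its proof applies unchanged and delivers the STLA property and the H\"older estimate asserted in the statement. The delicate point I foresee is the order-by-order matching of the two expansions of $u(x^{o,i}_{j(i)t})-u(x_o)$: one must make sure that the nonlinear terms of $u$ evaluated along $y_t$ do not interfere with the identification of the $t^r$-coefficient. The observation $|y_t|=O(t^r)$ along the induction guarantees that these contributions appear only from order $t^{2r}$ on, and therefore cannot perturb the leading-order identity that forces $Du(x_o)B_r=0$.
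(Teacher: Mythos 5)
Your reduction is correct and is genuinely different from the paper's argument. The paper does \emph{not} show that the corollary's hypotheses imply the extra vanishing assumption of Theorem~\ref{teomanifold}; instead, it re-runs the inductive step of that theorem's proof from the point of estimate~(\ref{eqendest}). There, the missing bound $|x^o_{T_i}-x_o|\le C(\tau_1+\dots+\tau_i)$ is obtained by extending $u$ to a local diffeomorphism $(u,\tilde u)$ with $\tilde u_l(x)=x_l$ for $l=h+j+1,\dots,n$ (this is where the nonsingularity of the first $h+j$ columns of the Jacobian is used), controlling $|u(x^o_{T_i})-u(x_o)|$ by the inductive assumption~(\ref{eqindumanifold}), and controlling the tail coordinates $|(x^o_{T_i}-x_o)_{h+j+1,\dots,n}|$ by applying Lemma~\ref{lema1} and Lemma~\ref{lemestimate} to the autonomous \emph{subsystem} of those coordinates; the decoupling hypothesis on $(F_i)_{h+j+1,\dots,n}$ is precisely what makes this subsystem argument legitimate. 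Your argument, by contrast, compares the spatial Taylor expansion of $u$ (or $\hat u$) around $x_o$ with the Hamiltonian expansion of Proposition~\ref{proptaylor}: under the induction hypothesis $B_s=0$ for $s<r$, the coefficient matching yields $Du(x_o)B_r=(H_{f^{(i)}_1}\boxplus\dots\boxplus H_{f^{(i)}_{j(i)}})^r u(x_o)=0$, and this together with $B_r''=0$ and the Jacobian-block invertibility forces $B_r=0$, i.e. the extra hypothesis of Theorem~\ref{teomanifold}. Both proofs are valid, but yours is shorter, avoids redoing the induction of Theorem~\ref{teomanifold}, and -- noteworthy -- nowhere invokes the hypothesis that $(F_i)_{h+j+1,\dots,n}$ depend only on $(x_i)_{h+j+1,\dots,n}$ and $a$, which the paper's proof genuinely needs to make the subsystem application of Lemmas~\ref{lema1} and~\ref{lemestimate} sensible; so your route indicates that this decoupling assumption is in fact dispensable. (Do make explicit that the case $k_i=1$ is vacuous and that $u\in C^{\bar k}\subset C^2$ justifies the quadratic remainder bound when $k_i\ge 2$.)
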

\begin{proof}
We proceed as in the proof of Theorem \ref{teomanifold} until we get to (\ref{eqaabbcc}) and now estimate (\ref{eqendest}) as follows. Add $n-h-j$ components to $u=(u_1,\dots,u_{h+j})$ as $(\tilde u)_i(x)=x_i$, for $i=h+j+1,\dots,n$ in order to make $(u,\tilde u):\R^n\to\R^n$ locally invertible around $x_o$. If $L$ is a Lipschitz constant for the inverse function $(u,\tilde u)^{-1}$ then
$$|x_{T_{i}}^o-x_o|\leq L(|u(x_{T_{i}}^o)-u(x_o)|+|(x_{T_{i}}^o-x_o)_{h+j+1,\dots,n}|)\leq C(\tau_1+\dots+\tau_i),
$$
by the induction assumption, by Lemma \ref{lema1} applied to the subsystem of the space coordinates $h+j+1,\dots,n$ and Lemma \ref{lemestimate}. Eventually the left hand side in (\ref{eqendest}) estimates with $C(\tau_1+\dots+\tau_i)o(1)$ and we conclude the proof as before.
\end{proof}

\section {Examples}
In this section we show some examples illustrating our method.
\begin{exa} {(Bony type vector fields)} 
In $\R^3$, we consider a symmetric and convex controlled system $F(x,y,z,a_o,a_1)=f_1(x,y,z)a_1+f_2(x,y,z)a_2$, $f_1(x)= \;^t(z,z^2,0)$ and $f_2(x)= \;^t(0,0,1)$. We test our sufficient conditions for controllability with target 
${\mathcal T}=\{(x,y,z):x^2+y^2+z^2\leq \gep^2\}$
and $u(x,y,z)=(x^2+y^2+z^2-\gep^2)/2$. We check with $f(x)=(f_1(x)+f_2(x))/2$ and $g(x)=(f_1(x)-f_2(x))/2$.
We compute $H_f(u)=\nabla u\cdot f=\frac12(z-xz-yz^2)$ and $H_g(u)=\nabla u\cdot g=\frac12(-z-xz-yz^2)$. We check that on the boundary of the target they are both zero only on the circle
$$\left\{\begin{array}{ll}z=0,\\x^2+y^2+z^2=\gep^2.
\end{array}\right.$$
These are the points where we do not have first order decrease rate at the boundary of the target. Now we compute the coefficients for higher order controllability. We obtain
$$(H_f\boxplus H_g)^2u(x,y,0)=x/2,\quad  (H_g\boxplus H_f)^2u(x,y,0)=-x/2$$
therefore we have second order decrease rate at the points where $z=0,x\neq0$ by using a one switch trajectory. The only poins that remain to be checked are $(0,\pm\gep,0)$. We compute
$$(H_f\boxplus H_g)^3u(x,y,0)=y/2,\quad  (H_{-f}\boxplus H_{-g})^3u(0,y,0)=-y/2.$$
Thus we have third order controllability with a one switch trajectory following $-f$ first and $-g$ next at $(0,\gep,0)$ and following $f$ first and $g$ next at $(0,-\gep,0)$.
The analysis is then complete.\\
\end{exa}

\begin{exa}
In $\R^2$ with target $\mathcal T\{(x,y):x\leq 1\}$, we consider the affine system $F=f_0+af_1$, $a\in[-1,1]$ with $f_0(x,y)=\;^t(-y,x)$ and $f_1=2f_0$. Define $u(x,y)=x$. Then we obtain
$(f_0(x,y)+af_1(x,y))\cdot \nabla u(x,y)=-(1+2a)y$ so that a first order condition only fails at $(1,0)$. However we get
$$(H_{f_0+f_1}\boxplus H_{f_0-f_1})^2u(x,y)=-4x
$$
therefore a second order condition applies.
Notice that the Lie bracket $[f_1,f_0]\equiv 0$.
\end{exa}

\begin{exa}(From the book by Coron \cite{cor}.)
In $\R^2$ with target $\mathcal T=\{(x,y):x^2+y^2\leq r^2\}$ consider the affine vector field 
$F(x,y,a)=f_0(x,y)+af_1(x,y)$, $f_0(x,y)=\;^t(y^3,0),\;f_1(x,y)=\;^t(0,1)$. We compute
$$f(x,y,a)\cdot\nabla u(x,y)=y(xy^2+a),$$
which is negative on $\partial\mathcal T$ for some $|a|\leq1$ unless $y=0$ so that the points where a first order decrease rate condition fails are $(\pm r,0)$. Then we obtain
$$\begin{array}{c}
(H_{f_0+f_1}\boxplus H_{f_0-f_1})^2u(\pm r,0)=0,\;(H_{f_0+f_1}\boxplus H_{f_0-f_1})^3u(\pm r,0)=0,\\
(H_{f_0+f_1}\boxplus H_{f_0-f_1})^4u(x,y)=12x+204y^4,\; (H_{f_0-f_1}\boxplus H_{f_0+f_1})^4u(x,y)=-12x+204y^4,
\end{array}$$
therefore a fourth order condition holds at $(\pm r,0)$.\\
If instead we change the target to the origin then we get $(f_o\pm f_1)(0,0)=\;^t(0,\pm1)$ and also 
$$\begin{array}{cc}
(H_{f_0+f_1}\boxplus H_{f_0-f_1})I(0,0)=(0,0),\quad(H_{f_0+f_1}\boxplus H_{f_0-f_1})^2I(0,0)=(0,0),\\
(H_{f_0+f_1}\boxplus H_{f_0-f_1})^3I(0,0)=(0,0), (H_{f_0+f_1}\boxplus H_{f_0-f_1})^4I(0,0)=(12,0),\\
(H_{f_0-f_1}\boxplus H_{f_0+f_1})^4I(0,0)=(-12,0).
\end{array}$$
Therefore $\{(f_o\pm f_1)(0,0),(H_{f_0-f_1}\boxplus H_{f_0+f_1})^4I(0,0),(H_{f_0+f_1}\boxplus H_{f_0-f_1})^4I(0,0)\}$ is a positive basis of $\R^2$ and we have a 4th order condition at the origin.
\end{exa}

\begin{exa} (Nonlinear controlled damped and anharmonic oscillator) Notice that it is not an affine system.
We consider in $\R^2$ the controlled system where
$$F(x,y,a)=\left(\begin{array}{c}
y\\ax^3-x-a^2y\end{array}\right),\quad a\in[-1,1].
$$
We can compute, for target $\{(x,y):x^2+y^2\leq r^2\}$ and $u(x,y)=(x^2+y^2-r^2)/2$,
$$H_Fu(x,y)=ay(x^3-ay),$$
and we find out that at target points it is strictly negative for some $a$ except at the points $(\pm r,0)$. At these points we compute, for $f_1(x,y)=\;^t(y,-x-y)$, $f_2(x,y)=\;^t(0,x^3)$, as $f_1\pm f_2$ are available,
$$(H_{f_0+f_1}\boxplus H_{f_0-f_1})^2u(\pm r,0)=-2r^4$$
and therefore a second order condition is satisfied.
\end{exa}

\begin{exa}
In this example in $\R^3$ we consider the affine system with controlled vector field $F(x,y,z,a,b)=f_0(x,y,z)+af_1(x,y,z)+bf_2(x,y,z)$, $a,b\in[-1,1]$ where $f_0(x,y,z)=\;^t(-y,x,0)/12$, $f_1(x,y,z)=\;^t(xz,yz,0)$, $f_2(x,y,z)=\;^t(0,0,1)$ and target the cylinder $\{(x,y,z):x^2+y^2\leq r^2\}$ so that we may choose $u(x,y,z)=(x^2+y^2-r^2)/2$.
We observe that 
$$H_{f_0}u\equiv 0,\quad H^{(2)}_{f_0}u\equiv 0,\quad H_{f_1}u=(x^2+y^2)z,\quad H_{f_2}u=0.$$
In particular the points of the boundary of the target with $z\neq0$ satisfy a first order condition with $f_0+f_1$ or $f_0-f_1$.
We also compute 
$$(H_{f_o+f_2}\boxplus H_{f_o-f_1})^2u(x,y,z)=-2(x^2+y^2),$$
therefore a second order condition is satisfied at the points of the boundary of the target with $z=0$. 
\end{exa}

\begin{exa} (Reaching an axis in $\R^3$.)
We consider the affine system in $\R^3$ given by the controlled field $F(x,y,z,a)=f_o(x,y,z)+af_1(x,y,z)$ where $f_o(x,y,z)=\;^t(y,0,-z)$ and $f_1(x,y,z)=\;^t(0,1,0)$. The target is the $z-$axis $u(x,y,z)=(x,y,0)=0$. One checks that
$$H_{f_o\pm f_1}u(0,0,z)=\;^t(0,\pm1),\quad (H_{f_o\pm f_1}\boxplus H_{f_o\mp f_1})^2u(0,0,z)=\;^t(\pm1,0)
$$
and these four vectors are a positive basis of $\R^2$. In addition it is null at the origin
$$(H_{f_o\pm f_1}\boxplus H_{f_o\mp f_1})I(0,0,z)=2H_{f_o}I(0,0,z)=2f_o(0,0,z).$$
Therefore at the origin a sufficient (second order) condition is satisfied in view of Theorem \ref{teomanifold}. At  the other points of the axis we cannot apply the Theorem directly, but in this case the assumptions of Corollary \ref{cormanifold} hold true because $DuF=\;^t(y\;a)$ only depends on variable $y$ and the second component $F_2=a$ depends only on the same state variable.
We get that the $z-$axis is STLA.
\end{exa}

\begin{exa}{(Reaching a curve in $\R^3$.)}
Consider the affine system $F(x,y,z,a)=f_o(x,y,z)+af_1(x,y,z)$, $a\in[-1,1]$, where $f_o(x,y,z)=\;^t(0,z,0)$ and $f_1(x,y,z)=\;^t(0,0,1)$ and assume that we want to reach the curve $u(x,y,z)=(y-x^2,z)=(0,0)$ starting from a neighborhood of the origin. The origin is not STLA for the system because trajectories will never change the first coordinate. We easily compute at the origin
$$\begin{array}{ll}
A_o&=\left(H_{f_o+f_1}u(0)\;|\;H_{f_o-f_1}u(0)\;|\:(H_{f_o+f_1}\boxplus H_{f_o-f_1})^2u(0)\;|\;(H_{f_o-f_1}\boxplus H_{f_o+f_1})^2u(0)\right)\\
&=\left(\begin{array}{cccc}
0\;&0\;&2\;&-2\\
1&-1&0&0
\end{array}\right)\end{array}$$
and the columns of $A$ are a positive basis of $\R^2$. Therefore the curve is STLA at the origin by Corollary \ref{cormanifold} as before. A similar analysis can be performed at all points of the curve.
\end{exa}

\section{Appendix: a refinement of Petrov's Lemma}

The technical Lemma that we discuss in this section is a refinement of a result initially due to Petrov \cite{pe} and a of first extension in the paper by the author \cite{so}. It is based on the properties of a positive basis of $\R^k$. We will sketch the proof for the reader's convenience because it is one important step to compltete our arguments. 
We start recalling the concept of a positive basis of a vector space. 
\begin{defin} We say that a family of vectors $a_i\in \R^k,i=1,\dots m$ is a positive basis of $\R^k$ if for any $x\in\R^k$ there are nonnegative real numbers $\lambda_i\geq0$ such that $x=\sum_{i=1}^m\lambda_ia_i$. \end{defin}
We recall a few useful properties that a positive basis enjoys, see \cite{pe2} for more details:
\begin{itemize}
\item[(i)]{$\{a_i\}_{i=1,\dots,m}$ is a positive basis if and only if for any unit vector $x\in\R^k$, $|x|=1$, there is $i$ such that $a_i\cdot x<0$;}
\item[(ii)]{$\{a_i\}_{i=1,\dots,m}$ is a positive basis if and only if the cone $\cup_{\lambda\geq0}\lambda \;\hbox{co}\{a_1,\dots,a_m\}=\R^k$;}
\item[(iii)]{by compactness of the unit sphere and (i), one easily shows that: a positive basis $\{a_i\}$ remains so after a small perturbation, that is: there is $\gep>0$ sufficiently small so that if $\{b_i\}_{i=1,\dots,m}$ satisfies $|a_i-b_i|<\gep$, $i=1,\dots,m$, then $\{b_i\}_{i=1,\dots,m}$ is also a positive basis;}
\item[(iv)]{if $\{a_i\}$ is a positive basis, then there are strictly positive $\lambda_i>0$, which can therefore be taken as large as we wish, so that $\sum_{i=1}^m\lambda_ia_i=0$;}
\item[(v)]{a positive basis contains a basis of $\R^k$ as a vector space and conversely if we have a basis $v_i$, $i=1,\dots,k$ of $\R^k$, then adding to it the vector $v_{k+1}=-\sum_{i=1}^kv_i$ gives a positive basis.}
\end{itemize}

The following statement improves Petrov \cite{pe2} in part (i) and in parts (i-ii) slightly the author \cite{so}. Below we indicate $\R^m_\geq=\{\tau\in\R^m:\tau\geq0\}$ and $B_\sigma(x_o)=\{x:|x-x_o|\leq\sigma\}$.

\begin{lem}\label{lempetrovter}
Let $A_o\in \R^{k\times m}$ be a matrix whose columns are a positive basis of $\R^k$. 

\noindent
(i)
Let $\gamma:\R^n\times\R^m_\geq\to\R^{k\times m}$, $\rho:\R^n\times\R^m_\geq\to\R^k$ be continuous functions such that for some $\sigma\in(0,1)$,
\begin{equation}\label{eqlemass}
\lim_{\tau\to0,\tau\geq0}\sup_{x\in B_\sigma(x_o)}|\gamma(x,\tau)|=0;\quad
\lim_{x\to x_o}\sup_{\tau\in B_\sigma(0),\tau\geq0}|\rho(x,\tau)|=0.\end{equation}
Therefore there are $\delta,\delta'>0$ such that for any $x\in\R^n$, $|x-x_0|\leq \delta'$, we can find $\tau\in \R^m$, $\tau\geq0$, $|\tau|\leq\delta$ such that the following equation is satisfied
\begin{equation}\label{eqmainpetrov}
(A_o+\gamma(x,\tau))\tau=\rho(x,\tau),
\end{equation}
and moreover $|\tau|\leq K\sup_{|t|\leq\sigma,t\geq0}|\rho(x,t)|$, for some $K>0$ independent of $x$, $|x-x_o|\leq\delta'$.

\noindent
(ii)
Let $s\in \R^m$ be such that together with $A_o$ above they satisfy: 
the matrix
$$\tilde A_o=\left(\begin{array}{c}A_o\\^ts
\end{array}\right)$$
has rank $k+1$ and for all $p\in \R^k$ and $r\geq0$ there exists $\lambda=\;^t(\lambda_1,\dots,\lambda_m)\geq0$ such that
$$p=A_o\lambda,\quad r\leq s\cdot\lambda.$$
Let $\gamma:\R^n\times\R^m\to\R^{(k+1)\times m}$, $\rho:\R^n\times\R^m\to\R^{k+1}$ be continuous functions satisfying (\ref{eqlemass}).
Then there are $\delta,\delta'>0$ such that for any $x\in\R^n$, $|x-x_0|\leq \delta'$ we can find $\tau\in \R^m$, $\tau\geq0$, $|\tau|\leq\delta$ and $h(x)\geq0$ such that the following equation is satisfied
$$(\tilde A_o+\gamma(x,\tau))\tau=\rho(x,\tau)
+h(x)e_{k+1},$$
where $e_{k+1}=\;^t(0\dots0,1)\in\R^{m+1}$. Moreover $|\tau|\leq K\sup_{|t|\leq\sigma,t\geq0}|\rho(x,t)|$, for some $K>0$ independent of $x$, $|x-x_o|\leq\delta'$.
\end{lem}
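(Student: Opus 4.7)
The plan is to recast (\ref{eqmainpetrov}) as a fixed point problem on a compact convex subset of $\R^m_\geq$ and apply Brouwer's theorem, using the positive-basis structure of $A_o$ to build a continuous nonnegative right-inverse.

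For part (i), the first step is to construct a continuous selection $\lambda:\R^k\to\R^m_\geq$ with $A_o\lambda(p)=p$ and $|\lambda(p)|\le C|p|$. By property (v) of a positive basis, $A_o$ has full row rank $k$, so there is a linear right inverse $B\in\R^{m\times k}$ with $A_oB=I_k$; by property (iv), there is $\mu\in\R^m$ with $\mu_i>0$ for every $i$ and $A_o\mu=0$. Setting
$$\lambda(p):=Bp+C_0|p|\mu,\qquad C_0:=\|B\|/\min_i\mu_i,$$
each component satisfies $(Bp)_i+C_0|p|\mu_i\ge |p|(C_0\mu_i-\|B\|)\geq0$, so $\lambda(p)\geq0$; continuity and the bound $|\lambda(p)|\le C|p|$ with $C:=\|B\|+C_0|\mu|$ are immediate.

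Given $\lambda$, define for $|x-x_o|\le\delta'$ the map
$$\Phi_x(\tau):=\lambda\bigl(\rho(x,\tau)-\gamma(x,\tau)\tau\bigr),\qquad\tau\in K_\delta:=\{\tau\in\R^m_\geq:|\tau|\le\delta\},$$
whose fixed points solve (\ref{eqmainpetrov}). Using (\ref{eqlemass}), I would choose $\delta\in(0,\sigma)$ small so that $C\sup_{|\tau|\le\delta,\,x\in B_\sigma(x_o)}|\gamma(x,\tau)|\le 1/2$, and then $\delta'\in(0,\sigma]$ so that $C\sup_{|\tau|\le\delta}|\rho(x,\tau)|\le \delta/2$ whenever $|x-x_o|\le\delta'$. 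Then
$$|\Phi_x(\tau)|\le C|\rho(x,\tau)|+C|\gamma(x,\tau)|\,|\tau|\le \delta/2+|\tau|/2\le \delta,$$
so $\Phi_x:K_\delta\to K_\delta$ is continuous and Brouwer produces a fixed point $\tau^*$. Evaluating the same inequality at $\tau^*$ gives $|\tau^*|\le C\sup_{|t|\le\sigma,\,t\geq0}|\rho(x,t)|+|\tau^*|/2$, hence $|\tau^*|\le 2C\sup_{|t|\le\sigma,\,t\geq0}|\rho(x,t)|$, which is the claimed estimate with $K=2C$.

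Part (ii) is handled analogously once one builds a continuous selection $\tilde\lambda:\R^k\times\R\to\R^m_\geq$ with $A_o\tilde\lambda(p,r)=p$, $s\cdot\tilde\lambda(p,r)\geq r$ and $|\tilde\lambda(p,r)|\le C(|p|+|r|)$. The hypothesis on $\tilde A_o$ applied with $p=0$ and arbitrarily large $r>0$ yields $\lambda_0\geq0$ with $A_o\lambda_0=0$ and $s\cdot\lambda_0>0$. Combining $B$, $\mu$, and $\lambda_0$ one may take
$$\tilde\lambda(p,r):=Bp+C_0|p|\mu+\frac{\max\{0,\,r-s\cdot(Bp+C_0|p|\mu)\}}{s\cdot\lambda_0}\,\lambda_0,$$
which is continuous, componentwise nonnegative, and linearly bounded in $(p,r)$. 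Writing $\gamma=\;^t(\gamma_{1:k},\gamma_{k+1})$ and $\rho=\;^t(\rho_{1:k},\rho_{k+1})$, the Brouwer argument is repeated with $\tilde\Phi_x(\tau):=\tilde\lambda(\rho_{1:k}-\gamma_{1:k}\tau,\,\rho_{k+1}-\gamma_{k+1}\tau)$ on $K_\delta$; a fixed point $\tau^*$ together with the definition $h(x):=s\cdot\tau^*+\gamma_{k+1}(x,\tau^*)\tau^*-\rho_{k+1}(x,\tau^*)\geq0$ completes the statement. The crux of the lemma, and the only genuinely delicate step, is the construction of the continuous nonnegative, linearly bounded selections $\lambda$ and $\tilde\lambda$ — that is precisely where the positive-basis assumption enters; once these are available, the Brouwer step and the a priori bound are routine.
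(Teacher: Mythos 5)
Your proof is correct and rests on the same two ingredients as the paper's: a positively homogeneous, nonnegative, continuous right-inverse selection built by adding a strictly positive kernel vector of $A_o$ (your $\mu$, the paper's $b_o$) to a linear right inverse, and a Brouwer fixed-point argument on the compact convex set $K_\delta=\{\tau\geq0:|\tau|\leq\delta\}$. The one organizational difference is that you keep the selection $\lambda$ fixed (a right inverse of $A_o$ alone) and move the perturbation into its argument via $\Phi_x(\tau)=\lambda(\rho-\gamma\tau)$, whereas the paper perturbs the selection itself, constructing an $(x,\tau)$-dependent nonnegative solution $\beta(x,\tau,X)$ of $(A_o+\gamma(x,\tau))\beta=X$ and setting $\Phi(\tau)=|\rho|\,\beta(x,\tau,\rho/|\rho|)$; your packaging is marginally cleaner, and your explicit construction of $\tilde\lambda$ in part (ii) (scaling a kernel element $\lambda_0$ with $s\cdot\lambda_0>0$ to meet the inequality constraint) makes the role of the boundary hypothesis on $\tilde A_o$ more transparent than the paper's appeal to \cite{so}.
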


\begin{proof} We outline the proof only in case (i) and refer to the references for more details. By the first limit in (\ref{eqlemass}) and rearranging the columns if necessary, 
we can assume that taking a sufficiently small $s>0$, for any $(x,\tau)\in B_\sigma(x_o)\times B_s(0)$ the columns of $A(x,\tau):=A_o+\gamma(x,\tau)$ are still a positive basis of $\R^k$ and we can split $A(x,\tau)=(A_1|A_2)$ where $A_1(x,\tau)$ is a $k\times k$ matrix such that $|\hbox{det}A_1(x,\tau)|\geq c_o>0$ for all $(x,\tau)\in B_\sigma(x_o)\times B_s(0)$.
Notice that $\gamma(x,0)=0$.
Let $M=\|A_1^{-1}\|_{L^\infty(B_\sigma(x_o)\times B_s(0))}$,  by property (iv) of the positive basis we can find $b_o\in\R^m$ such that $(b_o)_i\geq M+1$, $i=1,\dots,m$ and $A_ob_o=0$. 
Again by the first limit in (\ref{eqlemass}) we can find $0<\delta(\leq s)$ such that 
\begin{equation}\label{eqlembbb}
|A^{-1}_1(x,\tau)\gamma(x,\tau)b_o|\leq 1,\hbox{ for all } (x,\tau), \;|\tau|\leq\delta, \;|x-x_o|\leq\sigma,\;\tau\geq0.\end{equation}
Let $K=|b_o|+1+M$. 
By the second limit in (\ref{eqlemass}) choose $0<\delta'(\leq\sigma)$ such that 
$$|\rho(x,\tau)|\leq\frac\delta K\leq1,\hbox{ for all } (x,\tau), \;|\tau|\leq\sigma, \;|x-x_o|\leq\delta',\;\tau\geq0.$$

Now we can fix any $x,|x-x_o|\leq\delta'$, 
and define $b(x,\tau)=b_o+b_\#(x,\tau)$ where 
$$b_\#(x,\tau)=\;^t(-A^{-1}_1(x,\tau)
\gamma(x,\tau)b_o,0,\dots,0)\in\R^m$$
 so that $b_i(x,\tau)\geq M$ for all $\tau\geq0$, $|\tau|\leq\delta$, where we indicate $b(x,\tau)=(b_1,\dots,b_m)$. Note that
$$(A_o+\gamma(x,\tau))b(x,\tau)=\gamma(x,\tau)b_o+(A_o+\gamma(x,\tau))b_\#(x,\tau)=0,
$$
that $b$ is continuous by the inverse function theorem and
$b(x,0)=b_o$.

For any unit vector $X\in\R^k$ define $c(x,\tau,X)=\;^t(A^{-1}_1(x,\tau)X,0)\in\R^m$ and observe that $
|c(x,\tau,X)|\leq M$. 
Let $\beta(x,\tau,X)=b(x,\tau)
+c(x,\tau,X)$. 
Note that 
$$
(A_o+\gamma(x,\tau))\beta(x,\tau,X)
=(A_o+\gamma(x,\tau))c(x,\tau,X)
=
X,
$$
and that $\beta$ is continuous and such that $\beta\geq0$ since $|c|\leq M$, $b\geq M$.
We also obtain that $
|\beta(x,\tau,X)|\leq |b_o|+|b_\#(x,\tau)|+|c(x,\tau,X)|\leq K$ for all $|\tau|\leq\delta$, $|X|=1$, $\tau\geq0$. 

In order to solve (\ref{eqmainpetrov}), recall that $x$ has been fixed and define
$$\Phi(\tau)=\left\{\begin{array}{ll}
|\rho(x,\tau)|\beta(x,\tau,\frac{\rho(x,\tau)}{|\rho(x,\tau)|}),\quad& \hbox{ if }\rho(x,\tau)\neq0\\
0,&\hbox{ if }\rho(x,\tau)=0.
\end{array}\right.$$
Now observe that by construction $\Phi:B_\delta(0)\cap\{\tau:\tau\geq0\}\to B_\delta(0)\cap\{\tau:\tau\geq0\}$ is a continuous function and therefore it has a fixed point $\tau^*$ (since $B_\delta(0)\cap\{\tau:\tau\geq0\}$ is closed and convex). Such fixed point then satisfies
$$\begin{array}{l}(A_o+\gamma(x,\tau^*))\tau^*=|\rho(x,\tau^*)|(A_o+\gamma(x,\tau^*))\;\beta\left(x,\tau^*,\frac{\rho(x,\tau^*)}{|\rho(x,\tau^*)|}\right)=\rho(x,\tau^*)
\end{array}$$
if $\rho(x,\tau^*)\neq0$ otherwise the conclusion is obvious. Thus
$\tau=\tau*\geq0$ solves equation (\ref{eqmainpetrov}). Moreover the fixed point satisfies
$|\tau^*|\leq K|\rho(x,\tau^*)|$, $|\tau^*|\leq\delta$ and
therefore as we wanted
$$|\tau^*|\leq K\sup_{|\tau|\leq\sigma}|\rho(x,\tau)|.
$$

Case (ii) is an appropriate variation of case (i) and follows similarly to the proof of Proposition 3.2 in \cite{so}.
\end{proof}
\begin{rem}
The original result by Petrov \cite{pe} solves the $x-$independent equation
$(A+\gamma(\tau))\tau=v\in\R^k$.
The paper by the author solves
$(A+\gamma(x,\tau))\tau=H(x)$,
and adds case (ii) to the argument. Here we add the variable $\tau$ to the right hand side and indicate the needed assumptions in this case.\\
If the columns of $A_o$ are a positive basis of $\R^k$ and $b_o$ is such that $(b_o)_i>0$, for all $i$, and $A_ob_o=0$, then we may define eccentricity of the basis relative to $b_o$ as the ratio
$$E(b_o)=\frac{\min_i(b_o)_i}{\max(b_o)_i}.$$
This is stable with respect to small perturbations of the basis (and the choice of $b_o$). 
Looking at the proof, if $|\gamma(x,\tau)|\leq C|\tau|$, $|\rho(x,\tau)|\leq C|x-x_o|$, as it happens in our applications of the Lemma, then we easily check that the choices of $\delta,\delta'$ can be made of the order of $\hat CE(b_o)$, $\hat C$ a constant uniformly depending on the data, so they are also stable with respect to small perturbations of the basis.
Indeed to implement the choice of $\delta$ in (\ref{eqlembbb}) we find
$$|A^{-1}_1(x,\tau)\gamma(x,\tau)b_o|\leq MC|\tau|\sqrt{m}(M+1)/E(b_o)
$$
when we choose $b_o$ so that $M+1=\min_i(b_o)_i$.
\end{rem}


\end{document}